\newtheorem{thm}{Theorem}[section]
\newtheorem{lem}[thm]{Lemma}
\newtheorem{cor}[thm]{Corollary}
\newtheorem{conj}[thm]{Conjecture}
\theoremstyle{definition}
\newtheorem{example}[thm]{Example}
\theoremstyle{remark}
\newtheorem{rem}[thm]{Remark}
\numberwithin{equation}{section}
\begin{document}

\newcommand{\thmref}[1]{Theorem~\ref{#1}}
\newcommand{\secref}[1]{Section~\ref{#1}}
\newcommand{\lemref}[1]{Lemma~\ref{#1}}
\newcommand{\propref}[1]{Proposition~\ref{#1}}
\newcommand{\corref}[1]{Corollary~\ref{#1}}
\newcommand{\remref}[1]{Remark~\ref{#1}}
\newcommand{\eqnref}[1]{(\ref{#1})}
\newcommand{\exref}[1]{Example~\ref{#1}}

\newcommand{\nc}{\newcommand}
\nc{\Z}{{\mathbb Z}}
\nc{\hZ}{{\underline{\mathbb Z}}}
\nc{\C}{{\mathbb C}}
\nc{\N}{{\mathbb N}}
\nc{\F}{{\mf F}}
\nc{\Q}{\mathbb {Q}}
\nc{\la}{\lambda}
\nc{\ep}{\delta}
\nc{\h}{\mathfrak h}
\nc{\n}{\mf n}
\nc{\A}{{\mf a}}
\nc{\G}{{\mathfrak g}}
\nc{\SG}{\overline{\mathfrak g}}
\nc{\DG}{\widetilde{\mathfrak g}}
\nc{\D}{\mc D}
\nc{\Li}{{\mc L}}
\nc{\La}{\Lambda}
\nc{\is}{{\mathbf i}}
\nc{\V}{\mf V}
\nc{\bi}{\bibitem}
\nc{\NS}{\mf N}
\nc{\dt}{\mathord{\hbox{${\frac{d}{d t}}$}}}
\nc{\E}{\EE}
\nc{\ba}{\tilde{\pa}}
\nc{\half}{\frac{1}{2}}
\nc{\mc}{\mathcal}
\nc{\mf}{\mathfrak} \nc{\hf}{\frac{1}{2}}
\nc{\hgl}{\widehat{\mathfrak{gl}}}
\nc{\gl}{{\mathfrak{gl}}}
\nc{\hz}{\hf+\Z}
\nc{\dinfty}{{\infty\vert\infty}}
\nc{\SLa}{\overline{\Lambda}}
\nc{\SF}{\overline{\mathfrak F}}
\nc{\SP}{\overline{\mathcal P}}
\nc{\U}{\mathfrak u}
\nc{\SU}{\overline{\mathfrak u}}
\nc{\ov}{\overline}
\nc{\wt}{\widetilde}
\nc{\wh}{\widehat}
\nc{\sL}{\ov{\mf{l}}}
\nc{\sP}{\ov{\mf{p}}}
\nc{\osp}{\mf{osp}}
\nc{\spo}{\mf{spo}}
\nc{\hosp}{\widehat{\mf{osp}}}
\nc{\hspo}{\widehat{\mf{spo}}}
\nc{\hh}{\widehat{\mf{h}}}
\nc{\even}{{\bar 0}}
\nc{\odd}{{\bar 1}}
\nc{\mscr}{\mathscr}

\newcommand{\blue}[1]{{\color{blue}#1}}
\newcommand{\red}[1]{{\color{red}#1}}
\newcommand{\green}[1]{{\color{green}#1}}
\newcommand{\white}[1]{{\color{white}#1}}


\newcommand{\aaf}{\mathfrak a}
\newcommand{\bb}{\mathfrak b}
\newcommand{\cc}{\mathfrak c}
\newcommand{\qq}{\mathfrak q}
\newcommand{\qn}{\mathfrak q (n)}
\newcommand{\UU}{\bold U}
\newcommand{\VV}{\mathbb V}
\newcommand{\WW}{\mathbb W}
\nc{\TT}{\mathbb T}
\nc{\EE}{\mathbb E}
\nc{\FF}{\mathbb F}
\nc{\KK}{\mathbb K}
\nc{\rl}{\texttt{r,l}}

 \advance\headheight by 2pt

\title[Reduction method for representations of queer Lie superalgebras]
{Reduction method for representations of queer Lie superalgebras}

\author[Chen]{Chih-Whi Chen}
\address{Department of Mathematics, National Taiwan University, Taipei 10617, Taiwan}
\email{d00221002@ntu.edu.tw}

\begin{abstract}
We develop a reduction procedure which provides an equivalence from an arbitrary  block of the BGG category for the queer Lie superalgebra $\mf{q}(n)$ to a "$\mathbb{Z}\pm s$-weights" ($s\in \mathbb{C}$)  block of a BGG category for finite direct sum of queer Lie superalgebras. We give  descriptions of blocks. We also establish equivalences between certain maximal parabolic subcategories for $\mf{q}(n)$ and  blocks of atypicality-one of the category of finite-dimensional modules for $\mf{gl}(\ell|n-\ell)$.

\end{abstract}


\maketitle

\let\thefootnote\relax\footnotetext{{\em Key words and phrases:}  Queer Lie superalgebra, general linear Lie superalgebra, BGG category, block decomposition,  maximal parabolic subcategory, equivalence.}

\let\thefootnote\relax\footnotetext{{\em 2010 Mathematics Subject Classification.} Primary 17B10.}

\section{Introduction} \label{Introduction}

\subsection{}
The character problem for finite-dimensional irreducible modules over queer Lie superalgebras $\mf{q}(n)$ was first solved by Penkov and Serganova \cite{PS1, PS2}. They provided an algorithm for computing the coefficient $a_{\la\mu}$ of the character of the irreducible $\mf{q}(n)$-module $L(\mu)$ in the expansion of the character of the associated Euler characteristic $E(\la)$ for given dominant weights $\la, \mu$.

In \cite{Br2} Brundan developed a different approach to computing the coefficient $a_{\la\mu}$ for integer dominant weights $\la,\mu$. Let $\mathbb{F}^n$ be the nth exterior power of the natural representation of type B quantum group $\text{U}_{{q}}(\mf{b}_{\infty})$ with infinite rank (cf. \cite{JMO}). It was proved that the transition matrix $(a_{\la\mu})$ is given by the transpose of the transition matrix between canonical basis and the natural monomial basis of $\mathbb{F}^n$ at $q=1$. This gives all irreducible characters of finite-dimensional integer weight modules in terms of a combinatorial algorithm for computing canonical bases. A new interpretation of the irreducible characters of finite-dimensional half-integer weight modules was given in \cite{CK} and \cite{CKW} as well.

The celebrated Brundan's Kazhdan-Lusztig conjecture \cite{Br1} for the BGG category of integer weight $\mf{gl}(m|n)$-modules has been proved by Cheng, Lam and Wang in \cite{CLW} (also see \cite{BLW}).  Furthermore, in \cite{CMW}, by using twisting functors and parabolic induction functors Cheng, Mazorchuk and Wang  reduced the irreducible character problem of an arbitrary weight to the problem of integer weight.

In the present paper, we study the problem analogous to \cite{CMW} for the queer Lie superalgebra. One of the main goals is to study the (indecomposable) blocks of the BGG category for queer Lie superalgebra. In particular, we will prove  equivalence of categories between certain blocks for  $\mf{q}(n)$ and $\mf{gl}(\ell|n-\ell)$.

Throughout the paper we denote by $\mathfrak{g}$ the queer Lie superalgebra $\mathfrak{q}(n)$ with the standard Cartan subalgebra $\mathfrak{h}$  for a fixed integer $n\geq 1$.  Let $\mc{O}^{\mf g}$ denote the BGG category (see, e.g., \cite[Section 3]{Fr}) of finitely generated $\mf{g}$-modules which are locally finite $\mf{b}$-modules and semisimple $\mf{h}_{\bar{0}}$-modules. Note that morphisms in $\mc{O}^{\mf{g}}$ are even. For a finite direct sum of queer Lie superalgebras and reductive Lie algebras, we have analogous notation of its BGG category. Let $m\in \mathbb{Z}_{+}$, $0\leq \ell \leq m$ and $s\in \mathbb{C}$. If $m\geq 1$, let $\Lambda_{s^{\ell}}(m)\subset \mathbb{C}^{m}$:
\begin{align} \label{DefOfLambda} \Lambda_{s^{\ell}}(m):=  \left\{ \lambda = (\lambda_1,\ldots,\lambda_m)  \mid
 \begin{array}{ll} (1) \ \ \lambda_i  \equiv s  \text{ mod } \mathbb{Z} \text{ for } 1\leq i\leq \ell,\\
(2)  \ \ \lambda_i  \equiv -s  \text{ mod } \mathbb{Z} \text{ for } \ell +1\leq i\leq n.\\
\end{array} \right\}. \end{align}
We define $\mf{q}(0)$ and $\Lambda_{s^{\ell}}(0)$ to be $0$ and the empty set, respectively. For each $\la \in \mf{h}_{\bar{0}}^*$, we shall assign a specific irreducible module $L(\la)$ of highest weight $\la$ and then define the corresponding block $\mc{O}_{\la}^{\mf{g}}$, see the definitions in Section \ref{CatOfModule}.  The following theorem is the first main result of this paper.

\begin{thm} \label{FirstMainThm}
 Let $\lambda \in \mathfrak{h}_{\bar{0}}^{*}$. Then $\mathcal{O}^{\mathfrak{g}}_{\lambda}$ is equivalent to a block $\mathcal{O}^{\mathfrak{l}}_{\mu}$ of a Levi subalgebra $\mathfrak{l}= \mathfrak{q}(n_1)\times \mathfrak{q}(n_2)\times \cdots \times \mathfrak{q}(n_{k}) \subseteq \mathfrak{g}$ with $\sum_{i=1}^k n_i = n$ and the weight $\mu$ of the form \begin{align}
 \mu\in \Lambda_{s_{1}^{\ell_1}}(n_1) \times \Lambda_{s_{2}^{\ell_2}}(n_2) \times \cdots \times \Lambda_{s_{k}^{\ell_k}}(n_{k}),
  \end{align}
 such that $s_i \not\equiv \pm s_j \emph{ mod } \mathbb{Z}$ for all $i \not= j$.
\end{thm}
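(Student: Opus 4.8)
\emph{Strategy and normalization of $\la$.} The plan is to normalize $\la$ by means of twisting functors, then pass to a Levi subalgebra by parabolic induction, after which the theorem reduces to a linkage computation governed by the Shapovalov form of $\mf q(n)$. First, on $\{1,\dots,n\}$ declare $i\sim j$ whenever $\la_i-\la_j\in\Z$ or $\la_i+\la_j\in\Z$; this is an equivalence relation (transitivity from identities such as $(\la_i-\la_j)+(\la_j+\la_k)=\la_i+\la_k$). Let $B_1,\dots,B_k$ be its classes, $n_a=|B_a|$, and fix $s_a\in\C$ with $\la_i\equiv\pm s_a\bmod\Z$ for $i\in B_a$; then $s_a\not\equiv\pm s_b$ for $a\neq b$. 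Since $\rho=0$ for $\mf q(n)$, the $W=S_n$-action on $\mf h_{\bar 0}^{*}$ agrees with the dot-action, and (as for $\mf{gl}$ in \cite{CMW}) the queer analogue of Arkhipov's twisting functor $T_{s_\alpha}$ restricts to an equivalence $\mc O^{\mf g}_{\la}\xrightarrow{\ \sim\ }\mc O^{\mf g}_{s_\alpha\la}$ whenever $\langle\la,\alpha^{\vee}\rangle\notin\Z$. Composing such functors I may reorder the coordinates of $\la$ so that the classes $B_a$ become consecutive intervals and, within each, the indices with $\la_i\equiv s_a$ precede those with $\la_i\equiv -s_a$: every elementary transposition used either joins two distinct classes, or moves a ``$-s_a$''-index past a ``$+s_a$''-index when $2s_a\notin\Z$, so it is always applied across a non-integral direction (when $2s_a\in\Z$ one simply takes $\ell_a=n_a$). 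The resulting weight has the shape $\mu$ of the statement; renaming it $\la$, put $\mf l=\mf q(n_1)\times\cdots\times\mf q(n_k)$ for the corresponding standard Levi and $\mf p=\mf l+\mf b=\mf l\ltimes\mf u^{+}$ for the associated parabolic.

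\emph{Parabolic induction.} Inflating $\mf l$-modules to $\mf p$ via $\mf u^{+}\mapsto 0$ and inducing gives an exact functor $I:=U(\mf g)\otimes_{U(\mf p)}(-)\colon\mc O^{\mf l}\to\mc O^{\mf g}$ (exact because $U(\mf g)$ is free over $U(\mf p)$; its image lies in $\mc O^{\mf g}$ since $\mf u^{+}$ then acts locally nilpotently while weight spaces remain finite-dimensional), with $I(M^{\mf l}(\nu))\cong M^{\mf g}(\nu)$; its right adjoint $\Gamma$ takes $\mf u^{+}$-invariants and restricts to $\mf l$. Granting the key claim below that $[M^{\mf g}(\eta):L^{\mf g}(\nu)]=[M^{\mf l}(\eta):L^{\mf l}(\nu)]$ for all weights $\eta,\nu$ of the normalized shape, a character computation yields $I(L^{\mf l}(\nu))\cong L^{\mf g}(\nu)$, so $I$ carries $\mc O^{\mf l}_{\mu}$ into $\mc O^{\mf g}_{\mu}$, and dually $\Gamma(L^{\mf g}(\nu))\cong L^{\mf l}(\nu)$ with $R^{i}\Gamma(L^{\mf g}(\nu))=0$ for $i>0$; the standard unit/counit argument for the adjoint pair $(I,\Gamma)$, as in \cite{CMW}, then makes $I$ and $\Gamma$ mutually inverse equivalences $\mc O^{\mf l}_{\mu}\simeq\mc O^{\mf g}_{\mu}$. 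Combined with the twisting equivalences of the first step, this is the theorem.

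\emph{The key claim.} It remains to establish the multiplicity identity, equivalently that linkage inside a normalized block of $\mc O^{\mf g}$ never leaves $\mf l$. I expect to read this off the Jantzen filtration and Shapovalov determinant of $\mf q(n)$: the determinant factors into an ``even'' part, a product over positive roots $\alpha$ of factors affine-linear in $\langle\la,\alpha^{\vee}\rangle$ just as for $\mf{gl}(n)$, and an ``odd'' part, whose factors are affine-linear in the quantities $\la_i+\la_j$ and $\la_i$ and reflect the Clifford-module structure on highest-weight spaces. By construction of the class partition, every factor attached to a root $\epsilon_i-\epsilon_j$ whose two indices lie in different classes is a nonzero constant on the block, so all Verma embeddings and all Jantzen layers of $M^{\mf g}(\eta)$ are internal to $\mf l$. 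It is precisely the $\la_i+\la_j$ terms of the odd part that force the ``$+$'' alternative into the relation $\sim$; the ``$-$'' alternative together with the even part records the usual $\mf{gl}(n)$-linkage.

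\emph{Main obstacle.} The entire weight of the argument rests on this last point. Making it precise needs the fine module theory of $\mf q(n)$ — the even and odd Shapovalov determinants, or equivalently a careful analysis of odd reflections alongside the $S_n$-action on highest weights — and care is required because weight spaces of $\mf q(n)$-modules carry Clifford-module structures, so that, for instance, $\operatorname{Hom}$-spaces between Verma modules may be two-dimensional; one must check that the relevant combinatorial and homological data do coincide on the $\mf g$-side and the $\mf l$-side despite this. By contrast the twisting-functor equivalences, the construction and properties of $I$ and $\Gamma$, and the adjunction bookkeeping should all be routine.
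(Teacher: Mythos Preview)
Your overall architecture matches the paper's exactly: first normalize $\la$ by a sequence of twisting-functor equivalences across non-integral simple roots, then apply parabolic induction/restriction between $\mf g$ and the Levi $\mf l$. Where you diverge is in the justification of the key linkage step, and there your proposal takes a substantially harder and more speculative route than necessary.

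The paper does not need the Verma-multiplicity identity $[M^{\mf g}(\eta):L^{\mf g}(\nu)]=[M^{\mf l}(\eta):L^{\mf l}(\nu)]$, nor does it touch Shapovalov determinants or Jantzen filtrations for $\mf q(n)$. It only needs that the parabolic induction of each simple $L^0(\mu)\in\mc O^{\mf l}_{\la}$ is simple in $\mc O^{\mf g}$. For this it invokes the known description of central characters of $\mf q(n)$ due to Sergeev (see \cite[Theorem~2.48]{CW}): $\chi_\la=\chi_\mu$ if and only if $\mu=w(\la-\sum_j k_j\alpha_j)$ for some $w\in W$, scalars $k_j$, and mutually orthogonal roots $\alpha_j$ with $(\la,\overline{\alpha_j})=0$. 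Combined with the constraint $\mu\in\la+\Z\Phi$, one checks directly that for a weight of the normalized shape the element $w$ must lie in $W_\la$ and the $k_j$ must be integers; hence any singular vector in $\mathrm{Ind}_{\mf p}^{\mf g}L^0(\mu)$ has weight in $\mu-\sum_{\alpha\in\overline{\Phi}_\mu\cap\Phi^+}\Z_{\ge 0}\alpha$ and therefore already lies in the image of $L^0(\mu)$. This is exactly your intuition that ``linkage never leaves $\mf l$,'' but obtained in one line from the central character formula rather than from a factorization of Shapovalov determinants you would otherwise have to establish from scratch.

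Two smaller points. First, your ``key claim'' is strictly stronger than what is required; irreducibility of induced simples, together with the adjunction with $M\mapsto M^{\mf u}$ exactly as in \cite{CMW}, already forces the equivalence, so you can drop the $R^i\Gamma$ vanishing discussion entirely. Second, the $\la_i+\la_j$ phenomenon you correctly anticipate is precisely the atypicality condition $(\la,\overline{\alpha})=0$ appearing in Sergeev's description above, so the ``$+$'' alternative in your equivalence relation on indices is accounted for by the central character, not by an odd Shapovalov factor per se. Replacing your Shapovalov/Jantzen paragraph with this central-character argument removes what you call the main obstacle and makes the proof complete.
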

 Accordingly, the study of  blocks of $\mc{O}^{\mf{g}}$ is reduced to the study of blocks of the following three types: (i) ($s=0$) a BGG category $\mc{O}_{n,\mathbb{Z}}$ of the $\mf{q}(n)$-modules of integer weights, see, e.g., \cite{Br2}.
 (ii) ($s \in  \mathbb{Z}+\frac{1}{2}$) a BGG category $\mc{O}_{n,\frac{1}{2}+\mathbb{Z}}$ of the $\mf{q}(n)$-modules of half-integer weights, see, e.g., \cite{CK}, \cite{CKW}.
 (iii) ($s\notin \mathbb{Z}/2$) a BGG category $\mc{O}_{n,s^{\ell}}$ of the $\mf{q}(n)$-modules of  "$\pm s$-weights", see the definition in Section \ref{ParaCateOfqAndEqu}.

 \subsection{}
Let $\mathfrak{gl}(\ell|n-\ell)$ be the general linear Lie superalgebra with the standard Cartan subalgebra $\mathfrak{h}_{\ell |n-\ell}$  for $1\leq \ell \leq n$.  Another main result of the present paper is to establish an equivalence between a block $\mc{F}_{\la}$ of certain maximal parabolic categories $\mc{F}$ for $\mf{q}(n)$ and certain block of atypicality-one  of the finite-dimensional module category $\mc{F}_{\ell|n-\ell}$ for $\mf{gl}(\ell|n-\ell)$, see the definitions in Sections \ref{subsctionFiniteDimRepnOfgl} and  \ref{ParaCateOfqAndEqu}.  Their identical linkage principle (see Lemma \ref{LinkagePrinciple}) is the first piece of evidence to support such an equivalence.

 For a weight $\la \in \mf{h}_{\bar{0}}^*$, or $\la \in \mf{h}_{\ell|n-\ell}^*$, we denote by $\sharp\la$ the atypicality degree of $\la$ (see, e.g. \cite[Definitions 2.29, 2.49]{CW}). According to  \cite[Theorem 2.6]{Ser98} and \cite[Theorem 1.1]{BS12} the blocks $(\mc{F}_{\ell | n-\ell})_{\la}$ (see Section \ref{subsctionFiniteDimRepnOfgl}) for all $\ell,n-\ell$  with the same $\sharp \la$ are equivalent. More precisely, the endomorphism ring of projective generator of $(\mc{F}_{\ell | n-\ell})_{\la}$ is isomorphic to the opposite ring of the diagram algebra $K^{\infty}_{\sharp\la}$ (see, e.g., \cite[Introduction]{BS12}). In particular, $K^{\infty}_{1}$ is the path algebra of the infinite quiver

\begin{center}
\hskip -3cm \setlength{\unitlength}{0.16in}
\begin{picture}(24,3)
\put(7.8,1){\makebox(0,0)[c]{$\bullet$}}
\put(10.2,1){\makebox(0,0)[c]{$\bullet$}}
\put(15.2,1){\makebox(0,0)[c]{$\bullet$}}
\put(12.8,1){\makebox(0,0)[c]{$\bullet$}}
\put(9,1.1){\makebox(0,0)[c]{$\longrightarrow$}}
\put(9,0.7){\makebox(0,0)[c]{$\longleftarrow$}}
\put(11.5,1.1){\makebox(0,0)[c]{$\longrightarrow$}}
\put(11.5,0.7){\makebox(0,0)[c]{$\longleftarrow$}}
\put(14,1.1){\makebox(0,0)[c]{$\longrightarrow$}}
\put(14,0.7){\makebox(0,0)[c]{$\longleftarrow$}}


\put(6.5,0.95){\makebox(0,0)[c]{$\cdots$}}
\put(16.5,0.95){\makebox(0,0)[c]{$\cdots$}}

\put(9,1.8){\makebox(0,0)[c]{\tiny$x_{i-1} $}}
\put(9,0){\makebox(0,0)[c]{\tiny$y_{i-1}$}}
\put(11.5,1.8){\makebox(0,0)[c]{\tiny$x_{i} $}}
\put(11.5,0){\makebox(0,0)[c]{\tiny$y_{i}$}}
\put(14,1.8){\makebox(0,0)[c]{\tiny$x_{i+1} $}}
\put(14,0){\makebox(0,0)[c]{\tiny$y_{i+1}$}}
\end{picture}
\end{center}
modulo the relations $x_i\cdot y_i =y_{i-1}\cdot x_{i-1}$ and $x_i\cdot x_{i+1} = y_{i+1}\cdot y_{i} = 0$ for all $i\in \mathbb{Z}$.
We are now in a position to state the following theorem, which provides a Morita equivalence between $\mc{F}_{\la}$ and $(\mc{F}_{1|1})_0$.

\begin{thm} \label{EndOfProjIsoK1}
Let $L(\la) \in \mc{F}$ with $\sharp \la =1$. Then the endomorphism ring of the projective generator of $\mc{F}_{\la}$ is isomorphic to $(K^{\infty}_{1})^{\emph{op}}$.
\end{thm}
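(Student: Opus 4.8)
The plan is to compute the basic algebra of $\mc F_{\la}$ as a quiver with relations and to match it with the presentation of $K^{\infty}_1$ recalled above. The first step is to pin down the simple objects of $\mc F_{\la}$ using the linkage principle of \lemref{LinkagePrinciple}. Since $\sharp\la=1$, the combinatorial datum attached to $\la$ (its weight diagram, in the sense used for $\mf{gl}(\ell|n-\ell)$) carries a single movable atypical pair, so the isomorphism classes of irreducibles in $\mc F_{\la}$ form a doubly infinite chain $\{L(\la_i)\mid i\in\Z\}$ with $\la_0=\la$ and with $\la_i,\la_{i+1}$ related by one elementary move of that pair; I would make this explicit through odd-reflection computations and the known action of parabolic induction on highest weights. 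This already identifies the vertex set of the Gabriel quiver of $\mc F_{\la}$ with $\Z$, matching the quiver underlying $K^{\infty}_1$; it remains to produce the arrows and verify the relations.

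Next I would set up the homological bookkeeping inside the parabolic category $\mc F$. As a parabolic subcategory of $\mc O^{\mf g}$ it has enough projectives, each indecomposable projective $P(\la_i)$ admits a filtration whose sections are parabolic Verma (Kac-type) modules $\Delta(\la_j)$, and one has a BGG reciprocity of the form $(P(\la_i):\Delta(\la_j))=[\nabla(\la_j):L(\la_i)]$. The representation-theoretic input is the structure of parabolic Vermas in atypicality one: by Brundan's canonical-basis description of the decomposition numbers \cite{Br2} (equivalently the Penkov--Serganova algorithm \cite{PS1,PS2}, together with its half-integer counterpart \cite{CK,CKW}) applied inside $\mc F$, each $\Delta(\la_j)$ is uniserial of length two, with $[\Delta(\la_j)]=[L(\la_j)]+[L(\la_{j+1})]$ after fixing an orientation of the chain, and likewise for $\nabla(\la_j)$. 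Reciprocity then gives $(P(\la_i):\Delta(\la_j))=1$ for $j\in\{i-1,i\}$ and $0$ otherwise, so $P(\la_i)$ has the diamond Loewy structure with top $L(\la_i)$, radical layer $L(\la_{i-1})\oplus L(\la_{i+1})$, and socle $L(\la_i)$; in particular $[P(\la_i):L(\la_j)]$ equals $2$ for $i=j$, $1$ for $|i-j|=1$, and $0$ otherwise. Since $\dim\mathrm{Hom}(P(\la_i),P(\la_j))=[P(\la_j):L(\la_i)]$, the Hom-spaces have exactly these dimensions, and $\mathrm{End}(P(\la_i))\cong\C[z]/(z^2)$ is local.

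It then remains to determine the relations. Choose bases $x_i$ of $\mathrm{Hom}(P(\la_i),P(\la_{i+1}))$, $y_i$ of $\mathrm{Hom}(P(\la_{i+1}),P(\la_i))$, and $z_i$ of the radical of $\mathrm{End}(P(\la_i))$. The relations $x_ix_{i+1}=0$ and $y_{i+1}y_i=0$ hold automatically, since these composites land in $\mathrm{Hom}(P(\la_i),P(\la_{i\pm2}))=0$. For the remaining relations, observe that $x_i$ is, up to a scalar, the composite $P(\la_i)\twoheadrightarrow\Delta(\la_i)\hookrightarrow P(\la_{i+1})$ onto the standard submodule, and $y_i$ the composite $P(\la_{i+1})\twoheadrightarrow\nabla(\la_i)\hookrightarrow P(\la_i)$ onto the costandard submodule; tracking composition factors through the standard and costandard filtrations, the composite $y_ix_i$ factors through the canonical nonzero map $\Delta(\la_i)\to\nabla(\la_i)$ and hence has image the socle $L(\la_i)$ of $P(\la_i)$, so it is a nonzero multiple of $z_i$, and the symmetric computation shows the same for $x_{i-1}y_{i-1}$. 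An inductive rescaling of the $x_i,y_i$ along the chain arranges $y_ix_i=z_i=x_{i-1}y_{i-1}$ for every $i$. Since $\bigoplus_i P(\la_i)$ is a projective generator of $\mc F_{\la}$, comparing this presentation of $\mathrm{End}\big(\bigoplus_i P(\la_i)\big)$ with the path algebra $K^{\infty}_1$ identifies it with $(K^{\infty}_1)^{\mathrm{op}}$, the passage to the opposite algebra reflecting the usual convention relating composition of paths and of morphisms; equivalently, $\mc F_{\la}$ is Morita equivalent to the principal block $(\mc F_{1|1})_0$.

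The main obstacle is the representation-theoretic input in the middle step: establishing that the parabolic Vermas in $\mc F_{\la}$ really are length two with nearest-neighbor composition factors, and, intertwined with this, handling the feature peculiar to $\mf q(n)$ that Verma and Kac-type modules are induced not from one-dimensional modules but from Clifford modules over the Cartan subalgebra. One must check that for the maximal parabolic defining $\mc F$ these Clifford modules are of one and the same type all along the chain $\{\la_i\}$, so that they merely rescale dimensions and leave the decomposition numbers, the $\mathrm{Ext}^1$-quiver and the Loewy structure exactly as in the $\mf{gl}$-picture; this is precisely where the hypothesis that $\mc F$ be a \emph{maximal} parabolic subcategory is used. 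A second delicate point is the nonvanishing of the loop composites $y_ix_i$ and $x_{i-1}y_{i-1}$, which must be read off from the submodule structure of the $P(\la_i)$ rather than from composition multiplicities alone.
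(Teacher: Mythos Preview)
Your overall strategy mirrors the paper's: identify the simples as a $\Z$-chain, show that parabolic Vermas have length two with adjacent composition factors, deduce the diamond Loewy structure of projectives via BGG reciprocity, then read off the quiver relations from explicit maps through submodules. The final step---constructing $x_i,y_i,z_i$ and checking $x_{i+1}x_i=y_iy_{i+1}=0$ and $y_ix_i=z_i=x_{i-1}y_{i-1}$---is essentially how the paper concludes, except that it packages the two length-two submodules of $P_\mu$ as $A_\mu\cong K_{\mu^+}$ and a second module $B_\mu$ produced by applying the contravariant duality $\tau$.

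The genuine gap is your source for the composition factors of $\Delta(\la_j)$. You invoke \cite{Br2}, \cite{PS1,PS2}, \cite{CK}, \cite{CKW}, but those treat integer and half-integer weights, whereas $\mc F=\mc O^{\mf p}\cap\mc O_{n,s^\ell}$ is by construction the $s\notin\Z/2$ slice: none of those character formulas applies here, and indeed establishing them in this setting is part of what the theorem is for. The paper obtains the two-term Verma flag of $P(\la)$ \emph{intrinsically}, by building tiltings via translation functors from a typical tilting (Lemmas~\ref{chOfKInTranslationFunctor}--\ref{ProCovFromBrsAlgorithm}, adapting \cite[Procedure~3.20]{Br1}); this simultaneously yields that $P(\la)$ is injective (Remark~\ref{ProjInje}), hence that $\mathrm{soc}\,P(\la)$ is simple, which your argument needs but does not justify. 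The parity obstacle you flag at the end is also real and not resolved by your remark about Clifford modules: the paper proves a parity-refined BGG reciprocity (Lemma~\ref{BGGReciprocity}) using Frisk's computation of $L(\la)^\tau$ (Lemma~\ref{TauOfL}), and then shows (Corollary~\ref{Separaction Lemma}) that $L(\la)$ and $\Pi L(\la)$ lie in distinct blocks of $\mc F$ exactly when $n$ is even. Without this you cannot conclude that the Hom-spaces between the specific $P_{\la^i}$ (rather than between $P_{\la^i}\oplus\Pi P_{\la^i}$) have the claimed dimensions.
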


\subsection{} The paper is organized as follows. In Section \ref{SectionPre}, we recall definitions of queer Lie superalgebras, general linear Lie superalgebras and their categories of modules.

In Section \ref{SectionEqu}, an approach of reduction similar to  \cite{CMW} is established for queer Lie superalgebras. Equivalences of  blocks via twisting functors and parabolic induction functors are established. In addition, a description of decomposition of  blocks of $\mc{O}$ is given in Theorem \ref{Decomposition}. 

In Section \ref{SEctionEquOfMaxPara}, we recall the category of finite-dimensional modules for $\mf{gl}(\ell|n-\ell)$ and  introduce certain maximal parabolic category for $\mf{q}(n)$. A correspondence preserving linkage principles between their irreducibles is established. Finally, we compute the endomorphism ring of projective generator to obtain Theorem \ref{EndOfProjIsoK1}.\\

\noindent{\bf Acknowledgments.}
The author is very grateful to Shun-Jen Cheng for numerous helpful comments and suggestions.

\section{Preliminaries} \label{SectionPre}
\subsection{Lie superalgebras $\mf{gl}$ and $\mf{q}$}
For positive integers $m,n\geq 1$, let $\mathbb{C}^{m|n}$ be the complex superspace of dimension $(m|n)$. Let $\{ v_{\bar{1}}, \ldots , v_{\bar{m}} \}$ be an ordered basis for the even subspace $\mathbb{C}^{m|0}$ and $\{ v_{1}, \ldots , v_{n} \}$ be an ordered basis for the odd subspace $\mathbb{C}^{0|n}$ so that the general linear Lie superalgebra $\mathfrak{gl}(m|n)$ may be realized as $(m+n) \times (m+n)$ complex matrices indexed by $I(m|n):= \{ \bar{1}< \cdots  <\bar{m}<1<\cdots <n  \}$:
\begin{align} \label{glrealization}
 \left( \begin{array}{cc} A & B\\
C & D\\
\end{array} \right),
\end{align}
where $A,B,C$ and $D$ are respectively $m\times m, m\times n, n\times m, n\times n$ matrices.  For $m=n$, the subspace
\begin{align} \label{qnrealization}
 \mf{g}:= \mathfrak{q}(n)=
\left\{ \left( \begin{array}{cc} A & B\\
B & A\\
\end{array} \right) \middle\vert\ A, B: \ \ n\times n \text{ matrices} \right\}
\end{align}
forms a subalgebra of $\mathfrak{gl}(n|n)$ called the queer Lie superalgebra.

 Let $E_{ab}$ be the elementary matrix in $\mathfrak{gl}(m|n)$ with $(a,b)$-entry $1$ and other entries 0, for $a,b \in I(m|n)$. Then $\{e_{ij}, \bar{e}_{ij}|1\leq i,j \leq n\}$ is a linear basis for $\mathfrak{g}$, where $e_{ij}= E_{\bar{i}\bar{j}}+E_{ij}$ and $\bar{e}_{ij}= E_{\bar{i}j}+E_{i\bar{j}}$. Note that the even subalgebra $\mathfrak{g}_{\bar{0}}$ is spanned by $\{e_{ij}|1\leq i,j\leq n\}$, which is isomorphic to the general linear Lie algebra $\mathfrak{gl}(n)$.

Let $\mf{h}_{m|n}$ and $\mf{h}_{m|n}^*$ be respectively the standard Cartan subalgebra of $\mf{gl}(m|n)$ and its dual space, with linear bases $\{ E_{ii} |  i\in I(m|n)\}$  and $\{ \delta_i | i\in I(m|n)\}$ such that $\delta_i(E_{jj}) =\delta_{i,j}$.

Let $\mathfrak{h} = \mathfrak{h}_{\bar{0}}\oplus \mathfrak{h}_{\bar{1}}$ be the standard Cartan subalgebra of $\mathfrak{g}$, with linear bases $\{h_i:= e_{ii}| 1\leq  i \leq n\}$ and $\{\bar{h}_{i}:= \bar{e}_{ii}|1\leq i \leq n\}$ of $\mathfrak{h}_{\bar{0}}$ and $ \mathfrak{h}_{\bar{1}}$, respectively. Let $\{\varepsilon_i| 1\leq i\leq n\}$ be the basis of $\mathfrak{h}_{\bar{0}}^{*}$ dual to $\{h_i|1\leq i\leq n\}$. We define a symmetric bilinear form $( ,) $ on $\mathfrak{h}_{\bar{0}}^{*}$ by $( \varepsilon_i,\varepsilon_j)  = \delta_{ij}$, for $1\leq i,j\leq n$.

 We denote by $\Phi,\Phi_{\bar{0}},\Phi_{\bar{1}}$ the sets of roots, even roots and odd roots of $\mf{g}$, respectively. Let $\Phi^+=\Phi^+_\even\sqcup\Phi^+_\odd$ be the set of positive roots with respect to its standard Borel subalgebra $\mf b=\mf b_\even\oplus\mf b_\odd$, which consists of matrices of the form \eqref{qnrealization} with $A$ and $B$ upper triangular. Denote the set of negative roots by $\Phi^- : = \Phi \setminus \Phi^+$. Ignoring the parity we have $ \Phi_\even=\Phi_\odd = \{\varepsilon_i - \varepsilon_j| 1\leq i,j \leq n\}$ and $\Phi^+ = \{\varepsilon_i- \varepsilon_j| 1\leq i< j \leq n\}$. We denote by $\leq$ the partial order on $\mf{h}_{\bar{0}}^*$ defined by using $\Phi^+$. The Weyl group $W$ of $\mathfrak{g}$ is defined to be the Weyl group of the reductive Lie algebra $\mathfrak{g}_{\bar{0}}$ and hence acts naturally on $\mathfrak{h}_{\bar{0}}^*$ by permutation. We also denote by $s_{\alpha}$ the reflection associated to a root $\alpha\in \Phi^+$. For a given root $\alpha = \varepsilon_i - \varepsilon_j \in \Phi$, let $\bar{\alpha} :=\varepsilon_i + \varepsilon_j $. For each $\la \in \mathfrak{h}_{\bar{0}}^*$, we have the integral root system $\Phi_{\la}: = \{\alpha\in \Phi | ( \lambda,\alpha) \in \mathbb{Z}\}$ and the integral Weyl group $W_{\lambda}$ defined to be the subgroup of $W$ generated by all reflections $s_{\alpha}$, $\alpha\in \Phi_{\lambda}$.

\subsection{Categories of modules} \label{CatOfModule}
Let $V = V_{\overline{0}}\oplus V_{\overline{1}}$ be a superspace. For a given homogenous element $v\in V_{i}$ ($i \in \mathbb{Z}_2$), we let $\overline{v}$= $i$ denote its parity. Let $\Pi$ denote the parity change functor on the category of superspaces. Let $\Pi^0$ be the identity functor. For a $\mathfrak{g}$-module $M$ and $\mu \in \mathfrak{h}_{\bar{0}}^*$, let $M_{\mu}:=\{m\in M| h\cdot m = \mu(h)m, \text{ for } h\in \mathfrak{h}_{\bar{0}}\}$ denote its $\mu$-weight space. If $M$ has a weight space decomposition $M = \oplus_{\mu\in \mathfrak{h}_{\bar{0}}^*}M_{\mu}$, its character is given as usual by $\text{ch}M = \sum_{\mu \in \mathfrak{h}_{\bar{0}}^*}\text{dim}M_{\mu}e^{\mu} $, where $e$ is an indeterminate. In particular, we have the root space decomposition $\mathfrak{g} = \mf{h} \oplus (\oplus_{\alpha \in \Phi} \mathfrak{g}_{\alpha})$ with respect to the adjoint representation of $\mathfrak{g}$. 

 Let $\la = \sum_{i=1}^{n} \la_i \varepsilon_i  \in \mathfrak{h}_{\bar{0}}^*$, and consider the symmetric bilinear form on $\mathfrak{h}_{\bar{1}}^*$ defined by $\langle\cdot,\cdot\rangle_{\la} : = \la([\cdot,\cdot] )$. Let $\ell(\la)$ be the number of $i$'s with $\la_i \neq 0$ and $\delta(\la) = 0$ (resp. $\delta(\la) =1$) if $\ell(\la)$ is even (resp. odd).  Let $1\leq i_1<i_2< \cdots < i_{\ell(\la)}\leq n$  such that $\la_{i_1}, \la_{i_2}, \ldots , \la_{i_{\ell(\la)}}$ are non-zero. Denote by $\lceil \cdot \rceil$ the ceiling function. Then the space  \begin{align} \mathfrak{h}'_{\bar{1}}:=
 \left(\oplus_{j\neq i_1,\ldots ,i_{\ell(\la)}}\mathbb{C}\overline{h}_j\right) \oplus \left( \oplus_{k=1}^{\ell(\la) - \lceil \ell(\la)/2 \rceil} \mathbb{C}(\overline{h}_{i_{2k-1}} + \frac{\sqrt{-\la_{i_{2k-1}}}}{\sqrt{\la_{i_{2k}}}} \overline{h}_{i_{2k}})  \right), \end{align}
 is a maximal isotropic subspace  of $\mathfrak{h}_{\bar{1}}$ associated to $\langle\cdot,\cdot\rangle_{\la} $. Put $\mathfrak{h}' =  \mathfrak{h}_{\bar{0}} \oplus \mathfrak{h}'_{\bar{1}}$. Let $\mathbb{C}v_{\la}$ be the one-dimensional $\mathfrak{h}'$-module with $\overline{v_{\la}} = \bar{0}$, $h\cdot v_{\la} = \la(h)v_{\la}$ and $h' \cdot v_{\la} =0 $ for $h\in \mathfrak{h}_{\bar{0}}$, $h' \in \mathfrak{h}'_{\bar{1}}$. Then $I_{\la} : = \text{Ind}_{\mathfrak{h}'}^{\mathfrak{h}}\mathbb{C}v_{\la}$ is an irreducible $\mathfrak{h}$-module of dimension $2^{\lceil \ell(\la)/2 \rceil}$ (see, e.g., \cite[Section 1.5.4]{CW}). We let $M(\la): = \text{Ind}_{\mathfrak{b}}^{\mathfrak{g}}I_{\la}$ be the Verma module, where $I_{\la}$ is extended to a $\mathfrak{b}$-module in a trivial way, and define $L(\la)$ to be the unique irreducible quotient of $M(\la)$.

Let $\mc{O}^{\mf g}$ denote the BGG category (see, e.g., \cite[Section 3]{Fr}) of finitely generated $\mf{g}$-modules which are locally finite over $\mf{b}$ and semisimple over $\mf{h}_{\bar{0}}$. Note that the morphisms in $\mc{O}^{\mf{g}}$ are even.
It is known (see, e.g., \cite[Section 1.5.4]{CW}) that $L(\la)\cong \Pi L(\la)$ if and only if $\delta(\la) = 1$. Therefore we have the following.
\begin{lem} \label{ClarificationOfParityOfL}
 $\{L(\la)| \la \in \mf{h}_{\bar{0}}^* \text{ with } \delta(\la) = 1 \} \cup \{L(\la), \Pi L(\la)| \la \in \mf{h}_{\bar{0}}^* \text{ with } \delta(\la) = 0 \}$ is a complete set of irreducible $\mf{g}$-modules in $\mc{O}^{\mf{g}}$ up to isomorphism.
\end{lem}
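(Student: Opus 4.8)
The plan is to deduce this directly from two inputs: the standard highest-weight theory for the category $\mc{O}^{\mf g}$, and the quoted criterion $L(\la)\cong\Pi L(\la)$ if and only if $\delta(\la)=1$. First I would check that every irreducible object of $\mc{O}^{\mf g}$ is isomorphic to $L(\la)$ or $\Pi L(\la)$ for some $\la\in\mathfrak{h}_{\bar 0}^*$. Let $M$ be irreducible in $\mc{O}^{\mf g}$. Since $M$ is finitely generated, locally $\mf b$-finite and $\mathfrak{h}_{\bar 0}$-semisimple, it is a weight module whose (nonempty) set of weights admits a maximal element $\la$ with respect to $\leq$. By maximality $\mathfrak{g}_\alpha M_\la=0$ for all $\alpha\in\Phi^+$, while $\mathfrak{h}=\mathfrak{h}_{\bar 0}\oplus\mathfrak{h}_{\bar 1}$ preserves the nonzero space $M_\la$. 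Picking an irreducible $\mathfrak{h}$-summand $N$ of $M_\la$: it is an irreducible module over the (Clifford-type) algebra $\mathfrak h$ on which $\mathfrak{h}_{\bar 0}$ acts through $\la$, so by the classification recalled in Section~\ref{CatOfModule} (cf.\ \cite[Section 1.5.4]{CW}) one has $N\cong I_\la$ or $N\cong\Pi I_\la$. Because $M$ is irreducible it is generated by $N$, hence is a quotient of $\mathrm{Ind}_{\mathfrak b}^{\mathfrak g}I_\la=M(\la)$ or of $\Pi M(\la)$; since $L(\la)$ is the unique irreducible quotient of $M(\la)$, we get $M\cong L(\la)$ or $M\cong\Pi L(\la)$.

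Next I would show the list has no repetitions. Both $L(\la)$ and $\Pi L(\la)$ have highest weight $\la$, and an even isomorphism of $\mathfrak g$-modules respects the weight-space decomposition and the partial order $\leq$, hence preserves the highest weight. Consequently $L(\la)$ (or $\Pi L(\la)$) can be isomorphic to $L(\mu)$ (or $\Pi L(\mu)$) only if $\la=\mu$, so the isomorphism classes split into disjoint families indexed by $\la\in\mathfrak{h}_{\bar 0}^*$. Within the family attached to a fixed $\la$ there are at most two classes, $L(\la)$ and $\Pi L(\la)$.

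Finally, invoking the quoted fact: if $\delta(\la)=1$ then $L(\la)\cong\Pi L(\la)$, so the single representative $L(\la)$ exhausts that family; if $\delta(\la)=0$ then $L(\la)\not\cong\Pi L(\la)$ as objects of $\mc{O}^{\mf g}$ (whose morphisms are even), so both must appear. Putting the three paragraphs together yields exactly the displayed complete and irredundant list. I do not expect a genuine obstacle here: the only point demanding care is the bookkeeping with the parity functor $\Pi$ together with the fact that morphisms in $\mc{O}^{\mf g}$ are even — this is precisely what keeps the two modules $L(\la),\Pi L(\la)$ distinct when $\delta(\la)=0$ and prevents the list from collapsing, and it is also what requires us to track $I_\la$ versus $\Pi I_\la$ in the first step.
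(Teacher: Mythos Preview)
Your proposal is correct and follows the same approach as the paper: the paper does not give a separate proof but simply states the lemma as an immediate consequence of the cited criterion $L(\la)\cong\Pi L(\la)\iff\delta(\la)=1$ from \cite[Section 1.5.4]{CW}, preceded by the words ``Therefore we have the following.'' You have merely spelled out the standard highest-weight-theory details (existence of a maximal weight, classification of simple $\mathfrak h$-modules, and irredundancy) that the paper leaves implicit.
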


 We denote by $Z(\mf{g})$ the {\em center} of U($\mf{g}$). As in the case of Lie algebras, the BGG category $\mathcal{O}^{\mathfrak{g}}$  of $\mathfrak{g}$ has a decomposition into subcategories corresponding to central characters $\chi_{\la}: Z(\mathfrak{g}) \rightarrow \mathbb{C}$ for $\lambda \in \mathfrak{h}_{\bar{0}}^*$. We have a refined decomposition by the linkage principle (see, e.g., \cite[Section 2.3]{CW})
\begin{align} \label{qnBasicLinkagePrinciple}
\mathcal{O}^{\mathfrak{g}} =   \bigoplus_{\lambda\in \mathfrak{h}_{\bar{0}}^* / \sim} \overline{\mathcal{O}}^{\mathfrak{g}}_{\lambda},
\end{align}
where the equivalence relation $\sim$ on $\mathfrak{h}_{\bar{0}}^{*}$ is defined by
 \begin{align} \label{qnLinkagePrinciple}
 \lambda \sim \mu  \text{ if and only if }  \chi_{\la} = \chi_{\mu} \text{ and } \mu \in \la +\mathbb{Z}\Phi,
 \end{align}
 and  $\overline{\mathcal{O}}^{\mathfrak{g}}_{\lambda}$ is the Serre subcategory of $\mathcal{O}^{\mathfrak{g}}$ generated by simple objects with highest weight $\mu$  such that $\la \sim \mu$. The subcategories $\overline{\mathcal{O}}^{\mathfrak{g}}_{\lambda}$ are decomposable in general.

For a finite direct sum of queer Lie superalgebras and reductive Lie algebras, we have analogous notation and decomposition of its BGG category.  When there is no confusion, we denote $\mc{O}^{\mf g}$ by $\mc{O}$. For $\la \in \mf{h}_{\bar{0}}^*$, denote the block of $\mc{O}$ containing $L(\la)$ by $\mc{O}_{\la}$. Namely, it is the Serre subcategory generated by the set of vertices in the connected component of the Ext-quiver for $\mc{O}$ containing $L(\la)$.

 \section{Equivalences and Reductions for Blocks of Queer Lie Superalgebra} \label{SectionEqu}
 \subsection{Equivalence  using twisting functors}

 For a simple root $\alpha \in \Phi^+$, we can defined the {\em twisting functor} $T_{\alpha}$ associated to $\alpha$. The twisting functor was originally defined by Arkhipov in \cite{Ar} and further investigated in more detail in \cite{AS}, \cite{KM}, \cite{CMW}, \cite{AL}, \cite{MS}, \cite{GG13}, \cite{KM}. Recall the precise definition of $T_{\alpha}$ as follows. First, fix a non-zero root vector $X \in (\mathfrak{g}_{\bar{0}})_{-\alpha}$. Since the adjoint action of $X$ on $\mathfrak{g}$ is nilpotent, by using a standard argument (see e.g. \cite[Lemma 4.2]{MO00}) we can form the Ore localization $U'_{\alpha}$ of $U(\mathfrak{g})$ with respect to the set of powers of $X$. Since $X$ is not a zero divisor in $U(\mathfrak{g})$, $U(\mathfrak{g})$ can be viewed as an associative subalgebra of $U'_{\alpha}$. The quotient $U'_{\alpha}/U(\mathfrak{g})$ has the induced structure of a $U(\mathfrak{g})$-$U(\mathfrak{g})$-bimodule. Let $\varphi=\varphi_{\alpha}$ be an automorphism of $\mathfrak{g}$ that maps $(\mathfrak{g}_{i})_{\beta}$ to  $(\mathfrak{g}_{i})_{s_{\alpha}(\beta)}$ for all simple root $\beta$ and $i\in \{\bar{0}, \bar{1}\}$. Finally, consider the bimodule $^{\varphi}U_{\alpha}$, which is obtained from $U_{\alpha}$ by twisting the left action of $U(\mathfrak{g})$ by $\varphi$. We also have an analogous construction with respect to the subalgebra $\mathfrak{g}_{\bar{0}}$ to obtain the $U(\mathfrak{g}_{\bar{0}})$-$U(\mathfrak{g}_{\bar{0}})$-bimodule $^{\varphi}U_{\alpha}^{\bar{0}}$. Now we are in a position to define twisting functors:
 \begin{align}
 T_{\alpha}(-):=  ^{\varphi}U_{\alpha}\otimes - : \mathcal{O}^{\mathfrak{g}} \rightarrow \mathcal{O}^{\mathfrak{g}} \text{ and }
 T_{\alpha}^{\bar{0}}(-):= ^{\varphi}U_{\alpha}^{\bar{0}}\otimes - : \mathcal{O}^{\mathfrak{g}_{\bar{0}}} \rightarrow \mathcal{O}^{\mathfrak{g}_{\bar{0} }}.
 \end{align}
  Then $T_{\alpha}$ and $T_{\alpha}^{\bar{0}}$ have right adjoints $K_{\alpha}$ and $K_{\alpha}^{\bar{0}}$, respectively (see, e.g., \cite{AS}). Let $\mathcal{D}^b(\mathcal{O})$ and $\mathcal{D}^b(\mathcal{O}^{\mf{g}_{\bar{0}}})$ be the bounded derived categories of $\mathcal{O}$ and $\mathcal{O}^{\mf{g}_{\bar{0}}}$, respectively. It is not hard to prove that $T_{\alpha}$ and $T_{\alpha}^{\bar{0}}$ are right exact functors. Let $\mathcal{L}_iT_{\alpha}$, $\mathcal{L}_iT_{\alpha}^{\bar{0}}$ the $i$-th left derived functors of $T_{\alpha}$, $T_{\alpha}^{\bar{0}}$, respectively. It was proved in \cite{AS} that $\mathcal{L}_iT_{\alpha}^{\bar{0}} = 0$ for $i> 1$ and $\mathcal{L}_1T_{\alpha}^{\bar{0}}$ is isomorphic to the functor of taking the maximal submodule on which the action of $\mathfrak{g}_{-\alpha}$ is locally nilpotent.  Similarly, we have analogous definition for  right derived endofunctors $\mathcal{R}^iK_{\alpha}$ and $\mathcal{R}^iK_{\alpha}^{\bar{0}}$ of $K_{\alpha}$ and $K_{\alpha}^{\bar{0}}$, respectively. Furthermore, $\mathcal{R}^iK_{\alpha}^{\bar{0}} = 0$ for $i> 1$ and $\mathcal{R}^1K_{\alpha}^{\bar{0}}$ is isomorphic to the functor of taking the maximal subquotient on which the action of $\mathfrak{g}_{-\alpha}$ is locally nilpotent.

   The star action $\ast$ of $s_{\alpha}$ on weights had been introduced in \cite[Introduction]{GG13} and \cite{CM}: $s_{\alpha}\ast \la := s_{\alpha}\la$ if $( \la, \bar{\alpha}) \not= 0$ and $s_{\alpha}\ast \la := s_{\alpha}\la -\alpha$ if $( \la, \bar{\alpha}) = 0$. We call the former an $\alpha$-typical weight and the later an $\alpha$-atypical weight (also see \cite[Section 1.2.3]{GG13}). The following theorem is inspired by \cite[Proposition 8.6]{CM}.
 \begin{thm} \label{TwistingFunctorthm}
Let $\la \in \mf{h}_{\bar{0}}^*$ and $\alpha \in \Phi^+$ be a simple root such that $( \lambda, \alpha) \notin \mathbb{Z}$.  Then $ \Pi^i \circ T_{\alpha}: \mathcal{O}_{\la} \rightarrow \mathcal{O}_{s_{\alpha}\lambda}$ is an equivalence with inverse $ \Pi^j \circ K_{\alpha}: \mathcal{O}_{s_{\alpha}\la} \rightarrow  \mathcal{O}_{\la}$ for some $i,j \in \{0,1\}$.
 \end{thm}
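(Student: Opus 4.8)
The plan is to adapt the method of \cite{AS} and \cite[Proposition~8.6]{CM} to $\mf q(n)$. The crucial point is that the hypothesis $(\la,\alpha)\notin\Z$ propagates to every weight $\mu$ occurring in $\ov{\mc O}^{\mf g}_\la$: such a $\mu$ lies in $\la+\Z\Phi$, so $(\mu,\alpha)\in(\la,\alpha)+\Z\notin\Z$, and since $(\alpha,\alpha)=2$ this equals the $\mf{sl}_2^\alpha$-Cartan eigenvalue $\langle\mu,\alpha^\vee\rangle$. I first claim $T_\alpha$ is exact on $\ov{\mc O}_\la$. From the short exact sequence of $U(\mf g)$-bimodules $0\to U(\mf g)\to U'_\alpha\to U_\alpha\to 0$ and flatness of the Ore localization, $\mathcal L_1 T_\alpha M$ is the $\varphi_\alpha$-twist of the maximal submodule of $M$ on which $X$ acts locally nilpotently, and $\mathcal L_i T_\alpha=0$ for $i\ge 2$ (alternatively this reduces to the statements for $T^{\even}_\alpha$ recalled above, $U(\mf g)$ being free over $U(\mf g_{\even})$ by PBW). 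If a nonzero vector of a $\mf g$-submodule of $M\in\ov{\mc O}_\la$ were annihilated by a power of $X$, then taking a minimal such power produces $0\ne v'$ with $Xv'=0$; as $e_\alpha$ acts locally nilpotently on objects of $\mc O$, the span of $v',e_\alpha v',e_\alpha^2 v',\dots$ would be a finite-dimensional $\mf{sl}_2^\alpha$-module, forcing $\langle\mathrm{wt}(v'),\alpha^\vee\rangle\in\Z$, a contradiction. Hence $X$ acts injectively on $M$, $M$ has no nonzero locally $\mf g_{-\alpha}$-nilpotent submodule, and $\mathcal L_i T_\alpha|_{\ov{\mc O}_\la}=0$ for all $i\ge 1$. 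The same argument on $\ov{\mc O}_{s_\alpha\la}$, whose weights $\nu$ satisfy $(\nu,\alpha)\in-(\la,\alpha)+\Z\notin\Z$, shows that $\mathcal R^1 K_\alpha$ (the maximal locally $\mf g_{-\alpha}$-nilpotent subquotient) vanishes there, so $\mathcal R^i K_\alpha|_{\ov{\mc O}_{s_\alpha\la}}=0$ for all $i\ge 1$.

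Next I record the action on Verma modules. Localizing $M(\mu)=U(\mf n^-)\otimes I_\mu$ at $X$ and twisting by $\varphi_\alpha$ — and using that the Weyl vector of $\mf q(n)$ vanishes, since $\Phi_{\even}=\Phi_{\odd}$, so that the shifted reflection of $\mu$ produced by the twist is linkage-equivalent to $s_\alpha\mu$ — one obtains, for $(\mu,\alpha)\notin\Z$, isomorphisms $T_\alpha M(\mu)\cong\Pi^{i_\mu}M(s_\alpha\mu)$ and $K_\alpha M(\nu)\cong\Pi^{j_\nu}M(s_\alpha\nu)$ with $i_\mu,j_\nu\in\{0,1\}$. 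Since central characters are $W$-invariant and $s_\alpha$ preserves $\Z\Phi$, the relation $\sim$ of \eqref{qnLinkagePrinciple} is $s_\alpha$-equivariant; moreover each $\ov{\mc O}_\nu$ is $\Pi$-stable (both $L(\nu')$ and $\Pi L(\nu')$ have highest weight $\nu'$). Hence $T_\alpha$ restricts to an exact functor $\ov{\mc O}_\la\to\ov{\mc O}_{s_\alpha\la}$ and $K_\alpha$ to an exact functor $\ov{\mc O}_{s_\alpha\la}\to\ov{\mc O}_\la$, and these form an adjoint pair.

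It then remains to check the adjunction unit $\eta\colon\mathrm{id}\Rightarrow K_\alpha T_\alpha$ and counit are natural isomorphisms on $\ov{\mc O}_\la$ and $\ov{\mc O}_{s_\alpha\la}$. On $M(\mu)$ with $\mu\sim\la$: a Schur-lemma argument for the irreducible $\mf h$-module $I_\mu$ gives $\mathrm{End}_{\mc O}(M(\mu))=\C$, and $\mathrm{Hom}_{\mc O}(M(\mu),\Pi M(\mu))=0$ when $\delta(\mu)=0$; as $\eta_{M(\mu)}$ is the image of $\mathrm{id}_{T_\alpha M(\mu)}\ne 0$ under the additive adjunction isomorphism, it is nonzero, which forces $K_\alpha T_\alpha M(\mu)\cong\Pi^{i_\mu+j_{s_\alpha\mu}}M(\mu)\cong M(\mu)$ and $\eta_{M(\mu)}$ to be a nonzero scalar, hence an isomorphism. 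Both composites are exact and carry Verma modules to Verma modules, and $\ov{\mc O}_\la$ has enough projectives, each with a Verma flag; an induction on Verma-flag length with the five lemma then gives that $\eta$ is an isomorphism on every projective, and a projective presentation with the five lemma extends this to all of $\ov{\mc O}_\la$. Arguing symmetrically for the counit, $T_\alpha\colon\ov{\mc O}_\la\to\ov{\mc O}_{s_\alpha\la}$ is an equivalence with quasi-inverse $K_\alpha$. Finally, being an exact equivalence, $T_\alpha$ matches up connected components of the Ext-quivers, and it sends the simple quotient $L(\la)$ of $M(\la)$ to $\Pi^{i_\la}L(s_\alpha\la)$, the simple quotient of $\Pi^{i_\la}M(s_\alpha\la)=T_\alpha M(\la)$; so with $i=j:=i_\la\in\{0,1\}$ the functor $\Pi^i\circ T_\alpha$ restricts to an equivalence $\mc O_\la\to\mc O_{s_\alpha\la}$ with inverse $\Pi^j\circ K_\alpha$.

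The injectivity of $X$ and the resulting vanishing of the derived functors are the soft part of the argument; the delicate step is the explicit localization computation of $T_\alpha M(\mu)$ over $\mf q(n)$ used in the second paragraph — in particular, verifying that the shifted reflection it produces is linkage-equivalent to $s_\alpha\mu$ (rather than producing a weight outside $\ov{\mc O}_{s_\alpha\la}$) and controlling the parity shifts $i_\mu$. This is exactly where the hypotheses that $\alpha$ be simple and that the Weyl vector of $\mf q(n)$ vanish are used.
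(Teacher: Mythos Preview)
Your argument is essentially correct but follows a different route from the paper. The paper establishes exactness of $T_\alpha$ and $K_\alpha$ on $\ov{\mc O}_\la$, $\ov{\mc O}_{s_\alpha\la}$ by reducing to $\mf g_{\even}$ via the intertwining relation $\text{Res}^{\mf g}_{\mf g_{\even}}\circ\mc L_iT_\alpha=\mc L_iT_\alpha^{\even}\circ\text{Res}^{\mf g}_{\mf g_{\even}}$ (\cite[Lemma~5.1]{CM}); you instead give a direct $\mf{sl}_2^\alpha$-argument for the injectivity of $X$, which is equivalent and equally valid. The main divergence is in how the equivalence itself is obtained. The paper invokes \cite[Lemma~5.8]{CM}, which says that $T_\alpha L(\mu)$ is simple with $T_\alpha^2L(\mu)\in\{L(\mu),\Pi L(\mu)\}$ whenever $(\mu,\alpha)\notin\Z$ (this rests on the classification of simple $\mf q(2)$-modules), together with the character identity $\text{ch}\,T_\alpha M(\mu)=\text{ch}\,M(s_\alpha\mu)$ from \cite[Lemma~5.5]{CM} to pin down the target block; the equivalence is then read off from adjointness. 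You bypass the simple-module input entirely, instead proving the isomorphism $T_\alpha M(\mu)\cong\Pi^{i_\mu}M(s_\alpha\mu)$ at the level of Verma modules and checking that the adjunction unit $\eta_{M(\mu)}$ is a nonzero element of a one-dimensional $\text{Hom}$-space, then propagating to all of $\ov{\mc O}_\la$ by a Verma-flag/five-lemma argument on projectives followed by projective presentations.

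What each buys: the paper's route is shorter once one accepts the machinery of \cite{CM}, and in particular it yields for free (Remark~\ref{TwistingFunctorrem}) the identification $T_\alpha L(\la)\cong\Pi^iL(s_\alpha\ast\la)$ via the star action, which is used later in the paper. Your route is more self-contained and closer in spirit to the original \cite{AS} argument; it avoids any appeal to the $\mf q(2)$ classification, at the cost of the explicit localization computation you flag as delicate. That computation is indeed the one place where your write-up is thin: you should say precisely why $T_\alpha M(\mu)$ is again a Verma module (and not merely a module with the correct character), and how the parity shift $i_\mu$ arises from the Clifford structure on $I_\mu$ under the twist $\varphi_\alpha$. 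A clean alternative that sidesteps both your Verma computation and the paper's simple-module input is to use that $\mc L T_\alpha$ and $\mc R K_\alpha$ are mutually inverse derived auto-equivalences of $\mc D^b(\mc O)$ (the super analogue of \cite[Corollary~4.2]{AS}); since the higher derived functors vanish on $\ov{\mc O}_\la$ and $\ov{\mc O}_{s_\alpha\la}$ by your first paragraph, this immediately descends to an abelian equivalence, and one then only needs the character identity to identify the target block.
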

 \begin{proof}
   We claim that $T_{\alpha}$ and $K_{\alpha}$  are exact functors on $\mathcal{O}_{\la}$ and $\mathcal{O}_{s_{\alpha}\lambda}$, respectively. To see this, we first note that $\mathcal{L}_1T_{\alpha}^{\bar{0}}$ and $\mathcal{R}^1K_{\alpha}^{\bar{0}}$ vanish at each simple $\mathfrak{g}_{\bar{0}}$-module of highest weight $\mu$ with $( \mu,\alpha )\not \in \mathbb{Z}$ (e.g., \cite[Chapter 3]{Mar}). Next we recall that $\text{Res}_{\mathfrak{g}_{\bar{0}}}^{\mathfrak{g}}\circ \mathcal{L}_iT_{\alpha} = \mathcal{L}_iT_{\alpha}^{\bar{0}}   \circ  \text{Res}_{\mathfrak{g}_{\bar{0}}}^{\mathfrak{g}}$  and   $\text{Res}_{\mathfrak{g}_{\bar{0}}}^{\mathfrak{g}}\circ \mathcal{R}^iK_{\alpha} = \mathcal{R}^iK_{\alpha}^{\bar{0}}   \circ  \text{Res}_{\mathfrak{g}_{\bar{0}}}^{\mathfrak{g}}$ (e.g. \cite[Lemma 5.1]{CM}) for all $i\geq 0$. This means that $\mathcal{L}_iT_{\alpha}M = \mathcal{R}^iK_{\alpha}M' =0$ for all $M\in \mathcal{O}_{\la}, M'\in \mathcal{O}_{s_{\alpha}\la}$ and $i\geq 1$. As a conclusion, $T_{\alpha}$ and $K_{\alpha}$ are exact functors on $\mathcal{O}_{\la}$ and $\mathcal{O}_{s_{\alpha}\lambda}$, respectively. For $\mu\in \mf{h}_{\bar{0}}^*$ with  $(\mu, \alpha) \not \in \mathbb{Z}$, it is proved in \cite[Lemma 5.8]{CM} that $T_{\alpha}L(\mu)$ is simple with $T_{\alpha}^2L(\mu) \in \{ L(\mu), \Pi L(\mu)\}$. By a similar argument we can show that $K_{\alpha}L(\mu)$ is also simple with $K_{\alpha}^2L(\mu) \in \{  L(\mu), \Pi L(\mu)\}$. That is, that $T_{\alpha}$ and $K_{\alpha}$ preserve simple objects of $\mathcal{O}_{\la}$ and $\mathcal{O}_{s_{\alpha}\la}$, respectively. Finally recall that $\text{ch}T_{\alpha}M(\mu) = \text{ch}M(s_{\alpha}\mu)$ \cite[Lemma 5.5]{CM} for all $\mu \in \mathfrak{h}_{\bar{0}}^*$. From this together with the fact that $\text{Hom}_{\mathcal{O}}(T_{\alpha}L, L') = \text{Hom}_{\mathcal{O}}(L, K_{\alpha}L') $ for all simple objects  $L, L'\in \mc{O}$, we conclude that $T_{\alpha}$ sends objects of $\mathcal{O}_{\la}$ to objects of $\mathcal{O}_{s_{\alpha}\la}$ and $K_{\alpha}$ sends objects of $\mathcal{O}_{s_{\alpha}\la}$ to objects of $\Pi^i \mathcal{O}_{\la}$, for some $i=0,1$. Consequently, the restrictions of $T_{\alpha}$ and $K_{\alpha}$ make $T_{\alpha}: \mathcal{O}_{\la} \rightarrow \Pi^i \mathcal{O}_{s_{\alpha}\la}$  an equivalence with inverse $K_{\alpha}: \mathcal{O}_{s_{\alpha}\la} \rightarrow  \Pi^j \mathcal{O}_{\la}$, for some $i,j \in \{0,1\}$.
 \end{proof}

 \begin{rem} \label{TwistingFunctorrem}
 Let $\la,\alpha$ be as in Theorem \ref{TwistingFunctorthm}. It is worth pointing out that $T_{\alpha}L(\la)$ only depends on whether $\la$ is $\alpha$-typical or $\alpha$-atypical. That is, it was determined in \cite[Corollary 8.15]{CM}: By the classification of simple $\mathfrak{q}(2)$-highest weight modules in \cite{Mar10} we have $[T_{\alpha}L(\la) : \Pi^i L(s_{\alpha}\ast \la)] \neq 0$, for some $i=0,1$. This is also proved in \cite[Proposition 4.7.1]{GG13}.  As a consequence, we have $T_{\alpha}L(\la) = \Pi^i L(s_{\alpha}\ast \la)$ for some $i=0,1$.
 \end{rem}

 \begin{example}
Let $n=3$ and $\lambda := (-\pi,\pi,-\pi)$. Then by Theorem \ref{TwistingFunctorthm} and Remark \ref{TwistingFunctorrem}, $T_{\epsilon_1 -\epsilon_2} : \mathcal{O}_{\lambda} \rightarrow \mathcal{O}_{\widetilde{\la}}$ is an equivalence sending $L(\la)$ to $\Pi^i L(\widetilde{\la} -(\epsilon_1 -\epsilon_2))$ for some $i=0,1$, where $\widetilde{\la} = (\pi,-\pi,-\pi)$ .
\end{example}

\subsection{Equivalence  using parabolic induction functor}
The goal of this section is to show that the parabolic induction functors give equivalences of blocks under some suitable condition. For given integers $\ell,m$ with $1\leq \ell \leq m $ and $s\in \mathbb{C}$, recall the set $\Lambda_{s^{\ell}}(m)$ defined in \ref{DefOfLambda}. In this section, we consider  blocks  $\mathcal{O}_{\la}$ with the weight $\la \in \mf{h}_{\bar{0}}^*$ of the following form
 \begin{align} \label{AMainConditionOnWeights}
 \lambda \in \Lambda_{s_{1}^{\ell_1}}(n_1) \times \cdots \times \Lambda_{s_{k}^{\ell_k}}(n_{k})
  \text{ such that } s_1=0,\ \ s_2=\frac{1}{2} \text{ and } s_i \not\equiv \pm s_j \emph{ mod } \mathbb{Z},
    \end{align} for all $i \not= j$.
We define $\overline{\Phi}_{\la}:= \{\alpha\in \Phi |  ( \lambda,\bar{\alpha}) \in \mathbb{Z} \}\bigcup \Phi_{\la}$ and $\mathfrak{l}_{\la} := \mathfrak{h}\oplus\left(\oplus_{\alpha \in \overline{\Phi}_{\la}}\mathfrak{g}_{\alpha} \right)$ to be the Levi subalgebra associated to $\la$. In this case, we denote by $\mathfrak{u}_{\la}$ the corresponding nilradical. Furthermore, we have isomorphisms
\begin{align} \label{WeylgroupIso} W_{\la} \cong \mathfrak{S}_{n_1}\times \mathfrak{S}_{n_2} \times (\mathfrak{S}_{\ell_3}\times \mathfrak{S}_{n_3 -\ell_3}) \times \cdots \times (\mathfrak{S}_{\ell_k}\times \mathfrak{S}_{n_k -\ell_k}), \end{align}
and $\mathfrak{l}_{\la} \cong  \mathfrak{q}(n_1) \times \mathfrak{q}(n_2) \times \cdots  \times \mathfrak{q}(n_k)$. In order to prove that the parabolic induction functors are equivalences in this setting, we first recall the following well-known characterization of central characters (see, e.g., \cite[Theorem 2.48]{CW}).

 \begin{lem} \label{centralch}
 For $\lambda , \mu \in \mathfrak{h}_{\bar{0}}^{*}$, $\chi_{\lambda} = \chi_{\mu}$ if and only if there exist $ w \in W$, $\{k_j\}_j \subset \mathbb{C}$, and a subset of mutually orthogonal roots $\{\alpha_j\}_j$ such that $\mu = w(\lambda - \sum_j k_j \alpha_j)$ and $( \lambda, \overline{\alpha_j} ) =0$ for all $j$ .
 \end{lem}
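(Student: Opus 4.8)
The plan is to derive the statement from the Harish-Chandra homomorphism together with Sergeev's description of the center $Z(\mathfrak g)$. First I would record that $\rho:=\frac{1}{2}\bigl(\sum_{\alpha\in\Phi^+_{\bar0}}\alpha-\sum_{\alpha\in\Phi^+_{\bar1}}\alpha\bigr)=0$, since $\Phi^+_{\bar0}=\Phi^+_{\bar1}$ as subsets of $\mathfrak h_{\bar0}^*$; consequently the Harish-Chandra homomorphism carries no $\rho$-shift, and $\chi_\lambda$ is the homomorphism sending $z\in Z(\mathfrak g)$ to the value at $\lambda$ of its image under $Z(\mathfrak g)\to S(\mathfrak h_{\bar0})$. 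By Sergeev's theorem that image is the subalgebra $\Gamma_n\subseteq\mathbb C[x_1,\dots,x_n]^{\mathfrak S_n}$ cut out by the congruences $(\partial_{x_i}-\partial_{x_j})f\equiv 0\pmod{x_i+x_j}$ for all $i\neq j$, and $\Gamma_n$ is freely generated by the odd power sums $p_1,p_3,\dots,p_{2n-1}$. Hence $\chi_\lambda=\chi_\mu$ if and only if $\sum_i\lambda_i^{2r-1}=\sum_i\mu_i^{2r-1}$ for every $r\ge 1$.

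It remains to translate this family of scalar equalities into the combinatorial form asserted. I would encode the odd power sums in the rational function $G_\lambda(v):=\sum_{r\ge0}p_{2r+1}(\lambda)v^r=\sum_i\frac{\lambda_i}{1-\lambda_i^2v}$, so that the condition above reads $G_\lambda=G_\mu$. A pair of coordinates of the shape $\{a,-a\}$ contributes $0$ to $G_\lambda$, so $G_\lambda$ depends only on the multiset obtained from $\{\lambda_i\}$ by cancelling all such pairs and discarding zeros; call this reduced multiset $N(\lambda)$, and note it has no two entries summing to $0$. Inspecting the pole locations and residues of $G_\lambda$ recovers $N(\lambda)$ uniquely, so $G_\lambda=G_\mu$ if and only if $N(\lambda)=N(\mu)$. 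With this, the implication ``$\Leftarrow$'' is immediate: each $f\in\Gamma_n$ is $W$-invariant, so $f(w\nu)=f(\nu)$; and for a root $\alpha=\varepsilon_p-\varepsilon_q$ with $(\lambda,\overline\alpha)=\lambda_p+\lambda_q=0$, the line $\lambda-t\alpha$ keeps its $p,q$-coordinates of the shape $\{c,-c\}$, so by the congruence defining $\Gamma_n$ every $f\in\Gamma_n$ is constant along it. Since mutual orthogonality of the $\alpha_j$ means precisely that the pairs $\{p_j,q_j\}$ are pairwise disjoint, the shifts along the different $\alpha_j$ do not interfere, and $f(\lambda)=f(\lambda-\sum_jk_j\alpha_j)=f\bigl(w(\lambda-\sum_jk_j\alpha_j)\bigr)=f(\mu)$ for all $f\in\Gamma_n$, whence $\chi_\lambda=\chi_\mu$.

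For ``$\Rightarrow$'', suppose $N(\lambda)=N(\mu)$. Since $|\lambda|=|\mu|=n$, the multiset of $\lambda$ is $N(\lambda)$ together with an extra part made of some zeros and some $\pm$-pairs, and the multiset of $\mu$ is the same $N(\lambda)$ together with an extra part of the same size. I would first choose a permutation $w$ lining up the $N(\lambda)$-parts on the two sides, then partition the extra coordinates on each side into pairwise disjoint pairs each summing to $0$ (leaving one zero coordinate unpaired on each side when the extra part has odd size), and match $\lambda$-pairs to $\mu$-pairs arbitrarily. On each $\lambda$-pair $\{p_j,q_j\}$ a shift $k_j(\varepsilon_{p_j}-\varepsilon_{q_j})$, with $k_j$ chosen appropriately, moves the values $\{c,-c\}$ sitting there to the values $\{d,-d\}$ of the matched $\mu$-pair; after all these shifts $\lambda$ has the same multiset as $\mu$, so applying $w$ produces $\mu$. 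By construction the roots $\varepsilon_{p_j}-\varepsilon_{q_j}$ are mutually orthogonal and satisfy $(\lambda,\overline{\varepsilon_{p_j}-\varepsilon_{q_j}})=0$, as required. I expect this direction to be the main obstacle: the delicate point is to realize all the needed cancellations \emph{simultaneously}, using shifts along pairwise disjoint coordinate pairs that are already $\overline\alpha$-orthogonal to the \emph{original} $\lambda$. Sergeev's computation of $Z(\mathfrak g)$ (and the fact that $\Gamma_n$ is generated by odd power sums) is the other substantial ingredient, but it is classical and can simply be quoted.
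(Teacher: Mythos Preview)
The paper does not give a proof of this lemma; it is stated with a reference to \cite[Theorem~2.48]{CW} as a well-known characterization of the central characters of $\mathfrak q(n)$. Your argument via Sergeev's description of the Harish-Chandra image as the subalgebra of $\mathbb C[x_1,\dots,x_n]^{\mathfrak S_n}$ generated by the odd power sums, followed by the reduction to the multiset identity $N(\lambda)=N(\mu)$ through the generating function $G_\lambda$, is correct and is exactly the standard route to this result recorded in that reference.
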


Define a relation $\approx$ on $\mf{h}_{\bar{0}}^*$ as follows. For $\la, \mu \in \mf{h}_{\bar{0}}^*$ we let $\la\approx \mu$ if there exist $ w \in W_{\lambda}$, $\{k_j\}\subset \mathbb{Z}$, and a subset of mutually orthogonal roots $\{\alpha_j\}$ such that $\mu = w(\lambda - \sum_j k_j \alpha_j)$ and $( \lambda, \overline{\alpha_j} ) =0$ for all $j$.  The following lemma shows that $\sim$ and $\approx$ coincide in our setting.

\begin{lem} \label{Decompositonlem}
 Let $\la \in \mf{h}_{\bar{0}}^*$ be of the form \eqref{AMainConditionOnWeights}. Then $\mu \sim \la$ if and only if $\mu \approx \la$. In particular,
     if $\Pi^i L(\mu) \in \mathcal{O}_{\la}$, for some $i=1,2$, then $\mu \approx \la$.
\end{lem}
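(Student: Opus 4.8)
The plan is to prove the two implications separately; the hypothesis \eqref{AMainConditionOnWeights} is needed only for the forward direction $\mu\sim\la\Rightarrow\mu\approx\la$. The reverse implication is immediate from Lemma~\ref{centralch}: if $\mu=w(\la-\sum_j k_j\alpha_j)$ with $w\in W_\la$, $k_j\in\Z$ and the $\alpha_j$ mutually orthogonal with $(\la,\ov{\alpha_j})=0$, then the same data witnesses $\chi_\la=\chi_\mu$ (using $W_\la\subseteq W$ and $\Z\subseteq\C$), while $w$ is a product of reflections $s_\alpha$ with $\alpha\in\Phi_\la\subseteq\Phi$ and each $k_j\alpha_j\in\Z\Phi$, so $\mu\in\la+\Z\Phi$; hence $\mu\sim\la$. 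The ``in particular'' then follows at once: if $\Pi^iL(\mu)\in\mc O_\la$, this simple module, which has highest weight $\mu$, lies in the summand $\ov{\mc O}^{\mf g}_\la\supseteq\mc O_\la$ of \eqref{qnBasicLinkagePrinciple} and also in $\ov{\mc O}^{\mf g}_\mu$, so $\mu\sim\la$ and therefore $\mu\approx\la$ by the forward implication.

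For the forward implication, assume $\chi_\la=\chi_\mu$ and $\mu\in\la+\Z\Phi$, write $\la$ in the product form \eqref{AMainConditionOnWeights}, and denote by $\nu^{(i)}$ the $i$-th component of a weight $\nu$ in this product; I would argue in three steps. \emph{Step 1 (the central character splits over the factors).} From $\mu\in\la+\Z\Phi$ one gets $\mu_p\equiv\la_p\pmod\Z$ for every coordinate $p$, so $\mu$ lies in the same product $\Lambda_{s_1^{\ell_1}}(n_1)\times\cdots\times\Lambda_{s_k^{\ell_k}}(n_k)$ and agrees with $\la$ modulo $\Z$ coordinate by coordinate; in particular $\mu^{(i)}\in\Lambda_{s_i^{\ell_i}}(n_i)$. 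By Lemma~\ref{centralch}, $\chi_\la$ is detected by the multiset $\ov\la$ obtained from the coordinates of $\la$ by deleting a maximal family of pairs $\{t,-t\}$ of opposite coordinates (the standard parametrization of $Z(\mf q(n))$). Since $s_i\not\equiv\pm s_j\pmod\Z$ for $i\neq j$, a coordinate and its negative lie in one and the same factor, so $\ov\la=\ov{\la^{(1)}}\sqcup\cdots\sqcup\ov{\la^{(k)}}$ with $\ov{\la^{(i)}}$ supported in $(s_i+\Z)\cup(-s_i+\Z)$; these supports being pairwise disjoint, $\chi_\la=\chi_\mu$ forces $\ov{\la^{(i)}}=\ov{\mu^{(i)}}$, i.e.\ $\chi_{\la^{(i)}}=\chi_{\mu^{(i)}}$ for $\mf q(n_i)$, for every $i$. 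Combined with $\mu^{(i)}\equiv\la^{(i)}\pmod\Z$ and $\sum\mu^{(i)}=\sum\la^{(i)}$ (again Lemma~\ref{centralch}, now for $\mf q(n_i)$), this shows $\mu^{(i)}$ lies in $\la^{(i)}$ plus the root lattice of $\mf q(n_i)$, hence $\mu^{(i)}\sim\la^{(i)}$ in $\mc O^{\mf q(n_i)}$.

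\emph{Step 2 (reduction to a single factor).} As $(\la,\ov\alpha)=0$ forces the two indices of a root $\alpha$ into one factor and, by \eqref{WeylgroupIso}, $W_\la$ is the product of the integral Weyl groups $W_{\la^{(i)}}$, one checks directly from the definition of $\approx$ that $\mu\approx\la$ if and only if $\mu^{(i)}\approx\la^{(i)}$ for all $i$. Thus it suffices to prove $\nu'\sim\nu\Rightarrow\nu'\approx\nu$ for a single $\mf q(m)$ and $\nu\in\Lambda_{s^{\ell}}(m)$. \emph{Step 3 (the one-factor case).} Lemma~\ref{centralch} provides $w\in\mathfrak{S}_m$, $k_j\in\C$ and mutually orthogonal roots $\alpha_j=\varepsilon_{a_j}-\varepsilon_{b_j}$ with $\nu_{a_j}+\nu_{b_j}=0$ and $\nu'=w\bigl(\nu-\sum_j k_j\alpha_j\bigr)$. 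Every coordinate of $\nu$ lies in $s+\Z$ or $-s+\Z$; applying $\nu'_p\equiv\nu_p\pmod\Z$ at $p=w(a_j)$, where $\nu'_{w(a_j)}=\nu_{a_j}-k_j$, shows that for each $j$ either $k_j\in\Z$ or $k_j\equiv\pm2s\pmod\Z$, the latter occurring only when $2s\notin\Z$ and meaning that the pair $\{a_j,b_j\}$ merely interchanges $s+\Z\leftrightarrow-s+\Z$. For each such ``bad'' $j$ I would use the identity
\[
\nu-k_j\alpha_j \;=\; s_{\alpha_j}\bigl(\nu-(2\nu_{a_j}-k_j)\alpha_j\bigr),
\]
which follows from $s_{\alpha_j}\nu=\nu-2\nu_{a_j}\alpha_j$ (recall $\nu_{b_j}=-\nu_{a_j}$) and in which now $2\nu_{a_j}-k_j\in\Z$; since the $\alpha_j$ have pairwise disjoint supports these reflections commute and can be pulled out and absorbed into $w$, giving $\nu'=w'\bigl(\nu-\sum_j k'_j\alpha_j\bigr)$ with all $k'_j\in\Z$. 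Then $\nu-\sum_j k'_j\alpha_j$ has exactly the coset pattern of $\nu$, and so does $\nu'$, which forces $w'$ to permute coordinates within each of the two cosets, i.e.\ $w'\in W_\nu$; hence $\nu'\approx\nu$, as required.

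The delicate point, and the main obstacle, is that a relation $\mu\sim\la$ may \emph{a priori} be witnessed only by data as in Lemma~\ref{centralch} with $w\notin W_\la$ and non-integral $k_j$ — pairs of opposite coordinates can ``migrate'' between $\Z$-cosets, and a migration in one factor can be compensated by one in another factor, so that no single such move is by itself harmless. Step~1 — which is the technical heart, and exactly where $s_i\not\equiv\pm s_j$ enters — confines all such migration to a single factor, after which (Step~3) it amounts to one coset interchange and can be rewritten by a reflection. Verifying the factorization $\chi_\la\leftrightarrow(\chi_{\la^{(i)}})_i$ of the central character, together with the parallel factorization of the relation $\approx$, is therefore the crux; the remaining arguments are routine bookkeeping.
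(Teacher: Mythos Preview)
Your argument is correct. Both you and the paper use the same input, Lemma~\ref{centralch} together with the condition $\mu\in\la+\Z\Phi$, but the paper compresses the forward implication into a single line: from $\mu=w(\la-\sum_j k_j\alpha_j)$ and $\la\in\mu+\Z\Phi$ it simply asserts ``it follows that $w\in W_\la$ and $k_j\in\Z$''. As you rightly point out, this is not true of an \emph{arbitrary} witness from Lemma~\ref{centralch} (e.g.\ for $\la=(s,-s)$ with $s\notin\Z/2$ one may take $w=s_\alpha$, $k=2s$ to witness $\la\sim\la$), so some argument is needed to produce a good witness. Your route --- factor the central character over the blocks of $\ov\Phi_\la$ (Step~1), reduce to a single factor (Step~2), and then absorb each ``bad'' $k_j$ into a reflection via $\nu-k_j\alpha_j=s_{\alpha_j}(\nu-(2\nu_{a_j}-k_j)\alpha_j)$ (Step~3) --- is a clean way to carry this out; the paper presumably has the same endpoint in mind but leaves the verification to the reader. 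One small remark: in Step~1 you invoke the ``core'' description of $\chi_\la$ (delete maximal families of opposite pairs), which is equivalent to Lemma~\ref{centralch} but not literally its statement; it may be worth a sentence saying so.
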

\begin{proof}
 Since $\chi_{\lambda} = \chi_{\mu}$ we have $\mu = w(\lambda - \sum_jk_j\alpha_j)$ for some $w\in W$, $\{k_j\}_j\subset \mathbb{C}$ and $\{\alpha_j\}_j \subset \Phi$ such that $( \lambda, \overline{\alpha_j} ) =0$ for all $j$   by Lemma \ref{centralch}. Furthermore, we have $\lambda \in \mu +\mathbb{Z}\Phi$. It follows that  $w\in W_{\lambda}$ and $k_j\in \mathbb{Z}$ for all $j$. This completes the proof.
\end{proof}

The following theorem is inspired by \cite[Proposition 3.6]{CMW}.
\begin{thm} \label{ParabolicInductionthm}
 Let $\la \in \mf{h}_{\bar{0}}^*$ be of the form \eqref{AMainConditionOnWeights}.
   Let $\mathfrak{l}:= \mathfrak{l}_{\la},  \mathfrak{u}:= \mathfrak{u}_{\la}$. Then there are $i,j \in \{0,1\}$ such that the parabolic induction functor $\Pi^i \circ \emph{Ind}_{\mathfrak{l}+\mathfrak{u}}^{\mathfrak{g}}:\mathcal{O}^{\mathfrak{l}}_{\la}  \rightarrow \mathcal{O}_{\la} $ is an  equivalence, with inverse equivalence $\Pi^j \circ \emph{Res}_{\mathfrak{l}}^{\mathfrak{g}}: \mathcal{O}_{\la} \rightarrow \mathcal{O}^{\mathfrak{l}}_{\la}$ defined by $M \mapsto M^{\mathfrak{u}}$, where $ M^{\mathfrak{u}}$ is the maximal trivial $\mathfrak{u}$-submodule of $M$.
\end{thm}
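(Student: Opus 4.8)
The plan is to imitate the Lie-algebra argument of \cite[Proposition~3.6]{CMW}. Write $\mathrm{Ind}:=\mathrm{Ind}_{\mathfrak{l}+\mathfrak{u}}^{\mathfrak{g}}$ for the functor obtained by first inflating an $\mathfrak{l}$-module to a $\mathfrak{p}:=\mathfrak{l}+\mathfrak{u}$-module with $\mathfrak{u}$ acting by $0$ and then inducing, and set $\Gamma:=\mathrm{Res}_{\mathfrak{l}}^{\mathfrak{g}}(-)^{\mathfrak{u}}$. Since $U(\mathfrak{g})$ is free over $U(\mathfrak{p})$, the functor $\mathrm{Ind}$ is exact, $\mathrm{Ind}\,M\cong U(\overline{\mathfrak{u}})\otimes_{\mathbb{C}}M$ as weight modules (with $\overline{\mathfrak{u}}=\bigoplus_{\alpha\in\Phi^{+}\setminus\overline{\Phi}_{\lambda}}\mathfrak{g}_{-\alpha}$), $\mathrm{Ind}\,M^{\mathfrak{l}}(\mu)\cong M(\mu)$ for all $\mu$ (a character count plus the observation that $\mathrm{Ind}\,M^{\mathfrak{l}}(\mu)$ is a highest weight module of highest weight $\mu$), and $\Gamma$ is right adjoint to $\mathrm{Ind}$; all of this is compatible with $\Pi$. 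The combinatorial heart of the matter is that, for $\lambda$ as in \eqref{AMainConditionOnWeights}, one has $\overline{\Phi}_{\lambda}=\{\varepsilon_i-\varepsilon_j\mid i,j\text{ lie in the same one of the }k\text{ consecutive coordinate blocks}\}$, so $\mathfrak{l}=\mathfrak{l}_{\lambda}$ is block diagonal with the $k$ blocks being \emph{intervals}; I would then prove the key identity
\[
\mathbb{Z}\overline{\Phi}_{\lambda}\cap\mathbb{Z}_{\ge 0}\bigl(\Phi^{+}\setminus\overline{\Phi}_{\lambda}\bigr)=\{0\},
\]
since collapsing each block to a single coordinate turns a nonnegative integral combination of positive cross-block roots lying in $\mathbb{Z}\overline{\Phi}_{\lambda}$ into a nonnegative flow with zero divergence on the linearly ordered set of blocks, which must be zero. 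Alongside this I would note, using \lemref{Decompositonlem} and an inspection of the atypical shifts and the $W_{\lambda}$-action (which preserve coordinate residues modulo $\mathbb{Z}$ and the interval pattern), that every $\mu$ with $\Pi^{i}L(\mu)\in\mathcal{O}_{\lambda}$ is again of the form \eqref{AMainConditionOnWeights}, and that $\mu-\nu\in\mathbb{Z}\overline{\Phi}_{\lambda}$ whenever $\nu\approx\mu$.

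The main step is to establish that $\mathrm{Ind}\,L^{\mathfrak{l}}(\mu)$ is simple (isomorphic to $L(\mu)$ up to parity) for every $\mu$ with $L^{\mathfrak{l}}(\mu)\in\mathcal{O}^{\mathfrak{l}}_{\lambda}$. Because $L^{\mathfrak{l}}(\mu)$ is generated over $\mathfrak{l}$ by its highest weight space, the explicit shape of $\mathrm{Ind}$ shows $\mathrm{Ind}\,L^{\mathfrak{l}}(\mu)$ is a quotient of $M(\mu)$, hence a highest weight module; let $R$ be its radical. Any composition factor $L(\nu)$ has $\nu\approx\mu$, so $\mu-\nu\in\mathbb{Z}\overline{\Phi}_{\lambda}$; but $\mu-\nu$ also lies in $\mathbb{Z}_{\ge 0}(\Phi^{+}\cap\overline{\Phi}_{\lambda})+\mathbb{Z}_{\ge 0}(\Phi^{+}\setminus\overline{\Phi}_{\lambda})$, so by the displayed identity $\mu-\nu\in Q^{+}_{\mathfrak{l}}:=\mathbb{Z}_{\ge 0}(\Phi^{+}\cap\overline{\Phi}_{\lambda})$. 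A short weight-space count (again via the identity) shows that for $\nu$ with $\mu-\nu\in Q^{+}_{\mathfrak{l}}$ the $\nu$-weight space of $\mathrm{Ind}\,L^{\mathfrak{l}}(\mu)$ equals that of the $\mathfrak{l}$-submodule $1\otimes L^{\mathfrak{l}}(\mu)\cong L^{\mathfrak{l}}(\mu)$. As $R$ cannot contain the generating submodule $1\otimes L^{\mathfrak{l}}(\mu)$ and the latter is $\mathfrak{l}$-simple, $R\cap(1\otimes L^{\mathfrak{l}}(\mu))=0$, so $R$ vanishes in every weight $\nu\in\mu-Q^{+}_{\mathfrak{l}}$; since all composition factors of $R$ have highest weight of this form, $R=0$. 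An entirely similar argument gives $(L(\mu))^{\mathfrak{u}}=1\otimes L^{\mathfrak{l}}(\mu)$.

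To finish, I would argue as follows. The adjunction unit $M\mapsto\bigl(1\otimes M\hookrightarrow(\mathrm{Ind}\,M)^{\mathfrak{u}}\bigr)$ is an isomorphism on simple objects by the previous step, and then on all of $\mathcal{O}^{\mathfrak{l}}_{\lambda}$ by induction on length using only left exactness of $(-)^{\mathfrak{u}}$ and exactness of $1\otimes(-)$; hence $\Gamma\circ\mathrm{Ind}\cong\mathrm{Id}$ and $\mathrm{Ind}$ is fully faithful on $\mathcal{O}^{\mathfrak{l}}_{\lambda}$. Comparing characters in $\mathrm{Ind}\,M^{\mathfrak{l}}(\nu)\cong M(\nu)$ with the simplicity just proved yields $[M(\nu):L(\mu)]=[M^{\mathfrak{l}}(\nu):L^{\mathfrak{l}}(\mu)]$ for $\mu,\nu\approx\lambda$. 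Since $\mathrm{Ind}$ carries $\mathfrak{l}$-Vermas to $\mathfrak{g}$-Vermas, $\mathrm{Ind}\,P^{\mathfrak{l}}(\mu)$ has a Verma flag and, by full faithfulness, simple head $L(\mu)$ (up to parity); the BGG-reciprocity-type relation valid in $\mathcal{O}^{\mathfrak{l}}$ and in $\mathcal{O}^{\mathfrak{g}}$ (the correction factors, depending only on $\ell(\mu)$ and $\delta(\mu)$, being computed identically for $\mathfrak{l}$ and $\mathfrak{g}$) together with the multiplicity identity gives $(\mathrm{Ind}\,P^{\mathfrak{l}}(\mu):M(\nu))=(P(\mu):M(\nu))$, whence a lift of $P(\mu)\to L(\mu)$ identifies $\mathrm{Ind}\,P^{\mathfrak{l}}(\mu)$ with $P(\mu)$ up to parity. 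Thus, after one global parity twist, $\mathrm{Ind}$ is exact, fully faithful, and carries a projective generator of $\mathcal{O}^{\mathfrak{l}}_{\lambda}$ to a projective generator of $\mathcal{O}_{\lambda}$; it therefore induces an isomorphism of the corresponding endomorphism rings and an equivalence $\mathcal{O}^{\mathfrak{l}}_{\lambda}\xrightarrow{\ \sim\ }\mathcal{O}_{\lambda}$, with $\Pi^{j}\circ\Gamma$ as quasi-inverse — which is the assertion.

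The hard part will be the simplicity of $\mathrm{Ind}\,L^{\mathfrak{l}}(\mu)$, i.e.\ ruling out composition factors that descend along the $\mathfrak{u}$-directions. This is precisely what the enlarged root system $\overline{\Phi}_{\lambda}$ (rather than merely $\Phi_{\lambda}$) defining the Levi is designed to achieve, and it is where hypothesis \eqref{AMainConditionOnWeights} genuinely enters: it is what forces $\mathfrak{l}_{\lambda}$ to be block diagonal with interval blocks, hence what makes the identity $\mathbb{Z}\overline{\Phi}_{\lambda}\cap\mathbb{Z}_{\ge 0}(\Phi^{+}\setminus\overline{\Phi}_{\lambda})=\{0\}$ hold. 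A persistent secondary nuisance, already visible in \thmref{TwistingFunctorthm}, is the parity bookkeeping inherent to $\mathfrak{q}(n)$: every isomorphism above holds only up to a parity shift $\Pi$, and one must track these $\Pi^{i}$'s consistently throughout.
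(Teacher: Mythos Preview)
Your proposal is correct and follows essentially the same approach as the paper: both reduce the problem to showing that $\mathrm{Ind}_{\mathfrak{l}+\mathfrak{u}}^{\mathfrak{g}}L^{\mathfrak{l}}(\mu)$ is irreducible for every $\mu$ in the block, and both prove this simplicity by combining \lemref{Decompositonlem} with the combinatorial observation that, since $\mathfrak{l}_{\lambda}$ has interval blocks, any weight $\nu$ linked to $\mu$ and appearing in the induced module must satisfy $\mu-\nu\in\mathbb{Z}_{\ge 0}(\overline{\Phi}_{\lambda}\cap\Phi^{+})$, forcing the corresponding singular vector (or composition factor) to live inside $1\otimes L^{\mathfrak{l}}(\mu)$. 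The only difference is cosmetic: the paper dispatches the passage from ``simples induce to simples'' to ``equivalence of blocks'' with a one-line citation of \cite[Proposition~3.6]{CMW}, whereas you unpack that step explicitly via the adjunction unit, preservation of Verma flags, and BGG reciprocity --- more work than needed, but not wrong.
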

\begin{proof}
As in the proof of \cite[Propositon 3.6]{CMW}, it suffices to show that $\text{Ind}_{\mathfrak{l}+\mathfrak{u}}^{\mathfrak{g}}L^0_{\mu}$ is irreducible for each irreducible $\mathfrak{l}$-module $L^0_{\mu}\in \mathcal{O}^{\mathfrak{l}}_{\la} $ of highest weight $\mu$. We first assume that $\zeta \in \mathfrak{h}_{\bar{0}}^*$ is a weight of a non-zero singular vector in $\text{Ind}_{\mathfrak{l}+\mathfrak{u}}^{\mathfrak{g}}L^0_{\mu}$. Then by Lemma \ref{Decompositonlem} there exist $ w \in W_{\mu},\{k_j\}_j\subset \mathbb{Z}$, and a subset of mutually orthogonal roots $\{\alpha_j\}_j$ such that $\zeta = w(\mu - \sum_j k_j \alpha_j)$ and $( \mu, \overline{\alpha_j} ) =0$ for all $j$ (note that $\Phi_{\la} = \Phi_{\mu}$). On the other hand, by  consideration of the weights of $\text{Ind}_{\mathfrak{l}+\mathfrak{u}}^{\mathfrak{g}}L^0_{\mu}$, we have $\zeta \in  \mu- \sum_{\alpha\in \Phi^+}\mathbb{Z}_{\geq 0}\alpha$. Hence $  \zeta \in  \mu- \sum_{\alpha\in \overline{\Phi}_{\mu} \cap \Phi^+}\mathbb{Z}_{\geq 0}\alpha$ by \eqref{WeylgroupIso}. This means that every subquotient of $\text{Ind}_{\mathfrak{l}+\mathfrak{u}}^{\mathfrak{g}}L^0_{\mu}$ intersects $L^0_{\mu}$ and so $\text{Ind}_{\mathfrak{l}+\mathfrak{u}}^{\mathfrak{g}}L^0_{\mu}$ is irreducible. This completes the proof.
\end{proof}

\begin{proof}[Proof of Theorem \ref{FirstMainThm}] Let  $\la\in \mathfrak{h}_{\bar{0}}^*$.  We can first apply a sequence of suitable twisting functors (see Theorem \ref{TwistingFunctorthm}) to $\mathcal{O}_{\la}$ and obtain an equivalent block $\mathcal{O}_{\widetilde{\la}}$ such that $ \widetilde{\la}\in \Lambda_{s_{1}^{\ell_1}}(n_1) \times \Lambda_{s_{2}^{\ell_2}}(n_2) \times \cdots \times \Lambda_{s_{k}^{\ell_k}}(n_{k})$ and $s_i \not\equiv \pm s_j \emph{ mod } \mathbb{Z}$ for all $i \not= j$. Next we can apply the parabolic induction functor (see Theorem \ref{ParabolicInductionthm}) to obtain an equivalent block of the desired Levi subalgebra. This completes the proof.
\end{proof}

\begin{example}
Let $\lambda : = (\frac{1}{5},1,-\pi,\frac{3}{2},\pi,-\frac{3}{2},-\pi)$. Then by applying a sequence of twisting functors $\Pi^{i_\alpha} \circ T_{\alpha}$ with some $i_{\alpha} \in \{0,1\}$ in Theorem \ref{TwistingFunctorthm} related to $\alpha$-typical weights, we may transform  $\la$ to the weight $\widetilde{\la} = (1,\frac{1}{5}, \frac{3}{2}, -\frac{3}{2},-\pi,\pi,-\pi)$, which gives an equivalence from $\mathcal{O}_{\la}$ to $\mathcal{O}_{\widetilde{\la}}$ sending $L(\la)$ to $L(\widetilde{\la})$. Then we apply the twisting functor $\Pi^{i}\circ T_{\varepsilon_5 - \varepsilon_6}$ with some $i=0,1$ to obtain the weight $\widetilde{\widetilde{\la}}= (1, \frac{3}{2}, -\frac{3}{2}, \frac{1}{5},\pi,-\pi,-\pi)$ and an equivalence $\mathcal{O}_{\la}$ to $\mathcal{O}_{\widetilde{\widetilde{\la}}}$ which sends $L(\la)$ to $L(\widetilde{\widetilde{\la}} - (\varepsilon_5 - \varepsilon_6))$. Next we use the parabolic induction functors. Define $\alpha : =(\varepsilon_5 - \varepsilon_6)$. Note that
$\widetilde{\widetilde{\la}}, \ \ {\widetilde{\widetilde{\la}}-\alpha} \in \Lambda_{0^{0}}(1) \times \Lambda_{\frac{1}{2}^{1}}(2) \times \Lambda_{\frac{1}{5}^{0}}(1) \times \Lambda_{\pi^{1}}(3)$ and $\mathfrak{l}_{\widetilde{\widetilde{\la}}-\alpha} =\mathfrak{l}_{\widetilde{\widetilde{\la}}} \cong \mathfrak{q}(1) \times \mathfrak{q}(2) \times  \mathfrak{q}(1)\times\mathfrak{q}(3) $. By Theorem \ref{ParabolicInductionthm}, there is an equivalence from $\mathcal{O}_{\la}$ to $\mathcal{O}_{\widetilde{\widetilde{\la}}}^{\mathfrak{l}}$ sending $L(\la)$ to the irreducible $\mathfrak{l}$-module with highest weight $\widetilde{\widetilde{\la}} -\alpha$.
\end{example}

\subsection{Description of  blocks}
\begin{thm} \label{Decomposition}
Let $\la, \mu \in \mf{h}_{\bar{0}}^*$. Then $\Pi^i L(\mu) \in \mc{O}_{\la}$ for some $i = 0,1$ if and only if $\mu \approx \la$.
\end{thm}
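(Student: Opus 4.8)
The plan is to deduce Theorem~\ref{Decomposition} from the three equivalences already established, by transporting the block structure through them and invoking the corresponding statement in the reduced setting. First I would observe that since $\mc{O}_\la$ depends only on the linkage class of $\la$ and $\approx$ refines $\sim$, it suffices to prove the equivalence of the two relations after passing to a convenient representative of the $\sim$-class. Applying a finite sequence of twisting functors $T_\alpha$ as in Theorem~\ref{TwistingFunctorthm} (for simple roots $\alpha$ with $(\la,\alpha)\notin\Z$) replaces $\la$ by some $\wt\la$ in the normal form of \eqref{AMainConditionOnWeights}: explicitly, one sorts the coordinates of $\la$ by their residue class mod $\Z$ so that all coordinates $\equiv 0$ come first, then all $\equiv\hf$, then the remaining residue classes grouped together, with the $s$-part and $-s$-part of each class segregated as in \eqref{DefOfLambda}. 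By Remark~\ref{TwistingFunctorrem}, each $T_\alpha$ sends $L(\mu)$ to $\Pi^i L(s_\alpha\ast\mu)$, hence induces a bijection on simple objects; and one checks directly that the relation $\approx$ is preserved under $\mu\mapsto s_\alpha\ast\mu$ when $(\mu,\alpha)\notin\Z$ (here $s_\alpha\ast\mu=s_\alpha\mu$, and $s_\alpha$ normalizes $W_\mu$ and permutes the orthogonal root subsets). So both sides of the claimed equivalence are unchanged, and we are reduced to the case where $\la$ has the form \eqref{AMainConditionOnWeights}.

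In that case the key input is already in hand: Lemma~\ref{Decompositonlem} states precisely that $\mu\sim\la \iff \mu\approx\la$ for $\la$ of the form \eqref{AMainConditionOnWeights}, and moreover that $\Pi^iL(\mu)\in\mc{O}_\la$ forces $\mu\approx\la$. What remains is the converse direction for the \emph{block} (not merely the linkage class $\overline{\mc{O}}^{\mf g}_\la$): if $\mu\approx\la$ then $\Pi^iL(\mu)\in\mc{O}_\la$ for some $i$. For this I would apply Theorem~\ref{ParabolicInductionthm}: the parabolic induction functor $\Pi^i\circ\mathrm{Ind}_{\mf l+\mf u}^{\mf g}\colon \mc{O}^{\mf l}_\la\to\mc{O}_\la$ is an equivalence, and it sends irreducibles to irreducibles, so connectedness of the Ext-quiver is transported from the $\mf l$-side to the $\mf g$-side. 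Since $\mf l=\mf l_\la\cong\mf q(n_1)\times\cdots\times\mf q(n_k)$ with each factor-weight living in a single $\Z\pm s_i$-class, one reduces to showing: within $\mc{O}_{n,\Z}$, $\mc{O}_{n,\hf+\Z}$, or $\mc{O}_{n,s^\ell}$, every pair of $\approx$-equivalent weights lies in the same block. Here $\approx=\sim$ again by Lemma~\ref{Decompositonlem}, so $\overline{\mc O}^{\mf l}_\la$ is a single linkage class; the point is that it is also \emph{connected}. This last fact I would obtain from the standard observation that a nonzero singular vector of a Verma module $M(\mu)$ produces a nonsplit extension, so that adjacent weights in the linkage order are Ext-linked, combined with the fact (implicit in the central character computations via Lemma~\ref{centralch}) that the linkage class is generated under such "adjacent" steps — i.e.\ it suffices to connect $\mu$ to $s_\alpha\ast\mu$ and to $\mu-k\alpha$ ($(\mu,\bar\alpha)=0$) one step at a time, and each such step is realized inside a rank-one subalgebra $\mf q(2)$ where the relevant nonsplit extensions are explicit (cf.\ \cite{Mar10}).

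Assembling: given arbitrary $\la,\mu$ with $\mu\approx\la$, twist $\la$ (and correspondingly $\mu$) to normal form $\wt\la,\wt\mu$; note $\wt\mu\approx\wt\la$ hence $\wt\mu\sim\wt\la$ by Lemma~\ref{Decompositonlem}; decompose along the Levi $\mf l_{\wt\la}$ via Theorem~\ref{ParabolicInductionthm}; and conclude in each of the three basic categories that the single linkage class is a single block. Running the equivalences backwards gives $\Pi^iL(\mu)\in\mc{O}_\la$. Conversely, $\Pi^iL(\mu)\in\mc{O}_\la\Rightarrow\mu\sim\la\Rightarrow\mu\approx\la$, the first implication because blocks refine the linkage decomposition \eqref{qnBasicLinkagePrinciple} and the second because, after the same twist-and-induce reduction, Lemma~\ref{Decompositonlem} gives $\sim\,=\,\approx$ on the relevant classes (and $\approx$, like $\sim$, is transported correctly by the functors). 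The main obstacle I anticipate is the connectedness claim in the third step: verifying that the linkage class in each of $\mc{O}_{n,\Z}$, $\mc{O}_{n,\hf+\Z}$, $\mc{O}_{n,s^\ell}$ is actually connected as an Ext-quiver component, rather than just proving the coarser statement $\overline{\mc O}^{\mf g}_\la=\bigsqcup(\text{blocks})$. One should be careful that atypical linkage for $\mf q(n)$ can merge several $W_\la$-orbits, so the "adjacent step" argument must cover both the reflection moves $s_\alpha\ast$ and the translation moves $\mu\mapsto\mu-k\alpha$ along isotropic roots; handling the latter is where the $\mf q(2)$-computations of Mazorchuk \cite{Mar10} and the non-semisimplicity of the relevant $\mf q(2)$-blocks become essential.
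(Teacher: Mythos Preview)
Your proposal is correct and follows the same overall architecture as the paper: reduce to the normal form \eqref{AMainConditionOnWeights} via twisting functors (using that the star action preserves $\approx$), invoke Lemma~\ref{Decompositonlem} for the implication $\Pi^iL(\mu)\in\mc{O}_\la\Rightarrow\mu\approx\la$, and for the converse show that the linkage class is Ext-connected by exhibiting, for each elementary move generating $\approx$, a nonzero singular vector in a Verma module. The differences are minor. First, your detour through Theorem~\ref{ParabolicInductionthm} to pass to the Levi $\mf{l}_\la$ is unnecessary: the paper stays in $\mc{O}^{\mf g}$ and argues directly, since the singular-vector constructions already live in rank-one pieces and work without first decomposing into factors $\mf{q}(n_i)$. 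Second, where you appeal to Mazorchuk's $\mf{q}(2)$-classification \cite{Mar10} for both the reflection moves and the atypical translation moves, the paper is more explicit and uses two separate inputs: for the atypical step $\la\mapsto\la-\alpha$ with $(\la,\bar\alpha)=0$ it cites Penkov--Serganova \cite[Proposition~2.1]{PS2} to get $\text{Hom}_{\mf g}(M(\la-\alpha),\Pi^jM(\la))\neq 0$, and for the integral simple reflection $s=s_{\varepsilon_i-\varepsilon_{i+1}}\in W_\la$ with $\la_i-\la_{i+1}=k\in\Z_{>0}$ it writes down the singular vector $\overline{e}_{i,i+1}e_{i+1,i}^{k+1}v_\la$ of weight $s(\la)$ directly. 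Either route closes the ``main obstacle'' you flag; the paper's version simply avoids the extra equivalence and gives concrete formulas in place of the reference to \cite{Mar10}.
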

\begin{proof}
First assume that $\la \in \mf{h}_{\bar{0}}^*$ is of the form \eqref{AMainConditionOnWeights}. Thanks to Lemma \ref{Decompositonlem}, it remains to show that $\mu \approx \la$ implies $\Pi^i L(\mu) \in \mc{O}_{\la}$ for some $i=0,1$. Recall the fundamental lemma in \cite[Proposition 2.1]{PS2} by Penkov and Serganova. It follows from $\text{Hom}_{\G}(M(\la-\alpha), \Pi^j M(\la))\not=0$  for some $j=0,1$, for all $\alpha\in\Phi^+$ with $ (\la,\ov{\alpha})=0$ that $\Pi^i L(\la-\alpha) \in \mc{O}_{\la}$ for some $i=0,1$. Therefore we may assume that $\mu$ is of the form $s(\la)$, for some reflection $s\in W_{\la}$ corresponding to a simple root $\varepsilon_i-\varepsilon_{i+1}$. In this case, we have $\la_i - \la_{i+1} = k \in \mathbb{Z}$. Without loss of generality, assume that $k>0$. Let $v_{\la}\in M(\la)$ be a highest weight vector, it is not hard to compute that $\overline{e}_{i,i+1} e_{i+1,i}^{k+1}v_{\la}$ is a singular vector in $M(\la)$ of weight $s(\la)$ (see, e.g., \cite[Lemma 2.39]{CW}). This means that $\Pi^i L(\mu) \in \mc{O}_{\la}$ for some $i=0,1$. For arbitrary $\la \in \mf{h}_{\bar{0}}^*$, there are $\la' \in \mf{h}_{\bar{0}}^*$ of the form \eqref{AMainConditionOnWeights} and $T: \mc{O}_{\la} \rightarrow \mc{O}_{\la'}$ an equivalence constructed by using a sequence of twisting functors in Theorem \ref{TwistingFunctorthm}. For $\zeta, \zeta' \in \mf{h}_{\bar{0}}^*$ and simple reflection $s\in W$, note that $s\ast \zeta \approx s\ast \zeta'$ if and only if $\zeta \approx \zeta'$. The theorem now follows by Remark \ref{TwistingFunctorrem}.
\end{proof}

\begin{rem} If $\ell(\la)$ is odd, then $\mc{O}_{\la}$ is the Serre subcategory generated by $\{L(\mu)|\mu \approx \la\}$.
\end{rem}

\section{Equivalences of Certain Maximal Parabolic Subcategory} \label{SEctionEquOfMaxPara}

In this section, we fix non-negative integers $ n,\ell$ with $n \geq \ell$ and $s\not \in \mathbb{Z}/2$. The goal of this section is to establish an equivalence between certain block of atypicality-one of finite-dimensional category for $\mf{gl}(\ell|n-\ell)$ and some block of certain maximal parabolic subcategory for $\mf{q}(n)$.
\subsection{Finite-dimensional representations of $\mf{gl}(\ell|n-\ell)$} \label{subsctionFiniteDimRepnOfgl}
We denote by $\widetilde{\mc{F}}_{\ell|n-\ell}$   the category of integral weight, finite-dimensional $\mf{gl}(\ell|n-\ell)$-modules with even morphisms. Let $\Lambda^{\mf{a}}: = \oplus_{i=1}^n\mathbb{Z}\delta_i$ be the weight lattice. Recall that the set of all irreducible objects (up to parity) of $\widetilde{\mc{F}}_{\ell|n-\ell}$ are parametrized by its highest weight $\la$ in $ \Lambda^{\mf{a},+} :=\{ \lambda \in \Lambda^{\mf{a}} |  \ \   \lambda_i \geq \lambda_{i+1}  ,\text{ for } 1\leq i< \ell \text{ and } \ell \leq i<n\}$. We define $|\la| : = (\la , \sum_{i=\ell+1}^{n} \delta_i ) \text{ (mod $2$)} $. Recall that for a given $M \in \widetilde{\mc{F}}_{\ell|n-\ell}$, there is a decomposition $M = M_{+} \oplus M_{-}$ of $\mf{gl}(\ell|n-\ell)$-modules, where $M_{+}:=\oplus_{\mu \in \Lambda^{\mf{a}}}(M_{\mu})_{|\mu|}$ and $M_{-}:=\oplus_{\mu \in \Lambda^{\mf{a}}}(M_{\mu})_{|\mu|+1}$. This induces a  decomposition  $\widetilde{\mc{F}}_{\ell|n-\ell} = \mc{F}_{\ell|n-\ell} \oplus \Pi \mc{F}_{\ell|n-\ell}$ (see, e.g., \cite[Section 4-e]{Br1}), where $\mc{F}_{\ell|n-\ell}$ (resp. $\Pi \mc{F}_{\ell|n-\ell}$) is the full subcategory consisting of all $M\in \widetilde{\mc{F}}_{\ell|n-\ell}$ such that $M = M_+$ (resp. $M=M_-$).
 For $\zeta \in \Lambda^{\mf{a},+}$, denote by $(\mc{F}_{{\ell | n-\ell}})_{\zeta}$ the block of $\mc{F}_{{\ell | n-\ell}}$ containing the (unique) irreducible module $L^{\mf{a}}_{\zeta}$ of highest weight $\zeta$. Namely, it is the Serre subcategory generated by the set of vertices in the connected component of the Ext-quiver for $\mc{F}_{\ell | n-\ell}$ containing $L^{\mf{a}}_{\zeta}$.

As we mentioned in Section \ref{Introduction}, the diagram algebra $K^{\infty}_{1}$ is the path algebra of a certain infinite quiver.
Therefore we can identify $(K^{\infty}_{1})^{\text{op}}$ as the associative algebra generated by elements $\{z_i,x_j,y_k\}_{i,j,k \in \mathbb{Z}}$ and relations
\begin{align*}
z_ic = cz_i = y_iy_j =x_jx_i =0,\\
 x_iy_i =z_{i+1},\ \ y_ix_i =z_i,
\end{align*}
$\text{ for all } i,j\in \mathbb{Z}, c\in \{x_s,y_t\}_{s,t\in \mathbb{Z}}$.

\subsection{Parabolic categories of $\mf{q}(n)$ and Equivalences} \label{ParaCateOfqAndEqu}
 We define a bijection $\cdot^{\mf{a}} : \Lambda_{s^{\ell}}(n) \rightarrow \Lambda^{\mf{a}}$  by
\begin{align}
\la = \sum_{i=1}^{n} \la_i\varepsilon_i \in \Lambda_{s^{\ell}}(n) \longmapsto  \la^{\mf{a}} :=\sum_{i=1}^{\ell} (\la_i -s )\delta_i + \sum_{i=\ell+1}^{n}(\la_i+s)\delta_i -\rho \in   \Lambda^{\mf{a}},
\end{align} where $\rho := \sum_{i=1}^{\ell} -(\ell-i+1)\delta_i + \sum_{i=\ell+1}^{n} (i-\ell)\delta_i$.

   Let $\chi_{\la}^{\mf{a}}$ be the central character of $\mf{gl}(\ell|n-\ell)$ corresponding to $\la \in \mathfrak{h}_{\ell|n-\ell}^*$. We first consider the linkage principles under this bijection.

   Denote by $\Psi:=\{ \delta_i - \delta_j | 1\leq i\neq j\leq n\}$ the root system of $\mf{gl}(\ell|n-\ell)$. Recall that the linkage principle in \eqref{qnLinkagePrinciple} for $\mf{q}(n)$ defines an equivalence relation $\sim$.  The following lemma follows from Lemma \ref{Decompositonlem} and the proof of \cite[Proposition 3.3]{CMW}.
 \begin{lem} \label{LinkagePrinciple}
 Let $\la,\mu \in \Lambda_{s^{\ell}}(n)$. Then $\la \sim \mu$ if and only if $ \chi_{\la^\mf{a}}^{\mf{a}} = \chi_{\mu^\mf{a}}^{\mf{a}} \text{ and } \mu^\mf{a} \in \la^\mf{a} +\mathbb{Z}\Psi$.
 \end{lem}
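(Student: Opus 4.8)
The plan is to transport the $\mf{q}(n)$ linkage principle \eqref{qnLinkagePrinciple} across the bijection $\cdot^{\mf a}$ and match it term-by-term with the $\mf{gl}(\ell|n-\ell)$ linkage principle. First I would replace the relation $\sim$ by the combinatorial relation $\approx$: by \lemref{Decompositonlem}, since any $\la \in \Lambda_{s^{\ell}}(n)$ with $s\notin\Z/2$ is (up to reordering the coordinates within the first $\ell$ and within the last $n-\ell$, which is harmless) of the form \eqref{AMainConditionOnWeights} — indeed here $\overline{\Phi}_\la = \Phi$ and $W_\la = \mf{S}_\ell\times\mf{S}_{n-\ell}$ — we have $\la\sim\mu$ iff $\la\approx\mu$, i.e. iff there are $w\in W_\la$, $\{k_j\}\subset\Z$ and mutually orthogonal roots $\{\alpha_j\}\subset\Phi$ with $\mu = w(\la - \sum_j k_j\alpha_j)$ and $(\la,\overline{\alpha_j})=0$ for all $j$. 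The condition $(\la,\overline{\alpha_j})=0$ for $\alpha_j = \varepsilon_a - \varepsilon_b$ reads $\la_a + \la_b = 0$; since the coordinates of $\la$ lie in $s+\Z$ on the first block and $-s+\Z$ on the second, and $2s\notin\Z$, this forces $a\leq\ell<b$ (or vice versa), and then $\la_a - (-\la_b) = \la_a+\la_b = 0$ means exactly $\la^{\mf a}_a = \la^{\mf a}_b$ after the shift, i.e. $\alpha_j$ is a $\sharp\la$-type atypical root for $\la^{\mf a}$.

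Next I would run the analogous unwinding on the $\mf{gl}$ side: by the standard description of central characters for $\mf{gl}(\ell|n-\ell)$ together with the weight-lattice condition $\mu^{\mf a}\in\la^{\mf a}+\Z\Psi$ — this is precisely the content recalled in the cited \cite[Proposition 3.3]{CMW} — the condition $\chi^{\mf a}_{\la^{\mf a}} = \chi^{\mf a}_{\mu^{\mf a}}$ together with $\mu^{\mf a}\in\la^{\mf a}+\Z\Psi$ is equivalent to the existence of $w'\in\mf{S}_\ell\times\mf{S}_{n-\ell}$, integers $\{k'_j\}$ and mutually orthogonal isotropic roots $\{\beta_j\}\subset\Psi$ (isotropic meaning $\beta_j$ crosses the $\ell\,|\,n-\ell$ divide) with $\mu^{\mf a} = w'(\la^{\mf a} - \sum_j k'_j\beta_j)$ and $\la^{\mf a}$ atypical with respect to each $\beta_j$. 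So both linkage relations have been reduced to the same shape; it remains to check that $\cdot^{\mf a}$ intertwines them. This is the computational heart: one checks that $\cdot^{\mf a}$ is $\mf{S}_\ell\times\mf{S}_{n-\ell}$-equivariant (the $\rho$-shift is $\mf{S}_\ell\times\mf{S}_{n-\ell}$-invariant only after accounting for the $-\rho$ being a fixed vector — more precisely, $w(\la)^{\mf a} = w(\la^{\mf a}+\rho) - \rho$, and the orbit under the parabolic subgroup is the relevant thing), that it sends the even root $\varepsilon_a-\varepsilon_b$ (with $a\leq\ell<b$) to the isotropic root $\delta_a - \delta_b$ of $\Psi$ up to sign, and that subtracting $k\varepsilon_a - k\varepsilon_b$ from $\la$ corresponds under the shift-by-$s$ (resp. $+s$) bookkeeping to subtracting $k\delta_a - k\delta_b$ from $\la^{\mf a}$; finally that the orthogonality/atypicality conditions correspond as noted above.

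I expect the main obstacle to be the bookkeeping in this last intertwining step: the map $\cdot^{\mf a}$ is affine (it has the $-\rho$ term and the asymmetric $\pm s$ shifts on the two blocks), so one must be careful that "mutually orthogonal roots" and the conditions $(\la,\overline{\alpha_j})=0$ translate cleanly rather than picking up spurious constant terms, and that the $W_\la$ on the $\mf{q}$ side is genuinely identified with the $\mf{S}_\ell\times\mf{S}_{n-\ell}$ acting on $\Lambda^{\mf a}$ — this is where \eqref{WeylgroupIso} is invoked. Once the dictionary (even root $\leftrightarrow$ isotropic root, orthogonal family $\leftrightarrow$ orthogonal isotropic family, $\overline{\alpha}$-orthogonality $\leftrightarrow$ $\beta$-atypicality, integer coefficients $\leftrightarrow$ integer coefficients) is set up, the equivalence of the two relations is immediate, and combined with the first paragraph's reduction $\sim\,\Leftrightarrow\,\approx$ this yields the lemma. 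I would phrase the write-up as: "$\la\sim\mu \Leftrightarrow \la\approx\mu$ by \lemref{Decompositonlem}; unwinding $\approx$ via \lemref{centralch} and applying $\cdot^{\mf a}$ gives exactly the condition of \cite[Proposition 3.3]{CMW} for $\la^{\mf a},\mu^{\mf a}$, which is equivalent to $\chi^{\mf a}_{\la^{\mf a}} = \chi^{\mf a}_{\mu^{\mf a}}$ and $\mu^{\mf a}\in\la^{\mf a}+\Z\Psi$."
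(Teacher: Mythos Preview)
Your approach is correct and matches the paper's: the paper simply asserts that the lemma follows from \lemref{Decompositonlem} together with the proof of \cite[Proposition 3.3]{CMW}, and your write-up is precisely an unpacking of that sentence --- reduce $\sim$ to $\approx$ via \lemref{Decompositonlem}, then transport the combinatorial description across $\cdot^{\mf a}$ and identify it with the $\mf{gl}(\ell|n-\ell)$ linkage condition from \cite{CMW}. The bookkeeping you flag (affine $\rho$-shift, $W_\la\cong\mf{S}_\ell\times\mf{S}_{n-\ell}$, $(\la,\overline{\alpha})=0 \leftrightarrow$ odd-root atypicality of $\la^{\mf a}$) is exactly what is needed and goes through without issue.
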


We define the set $\Lambda_{s^{\ell}}^+(n) := \{ \lambda \in \Lambda_{s^{\ell}}(n) | \ \  \lambda_i > \lambda_{i+1}  ,\text{ for } 1\leq i< \ell \text{ and } \ell \leq i< n\}$. Note that we have $ \Lambda^{\mf{a},+}= (\Lambda_{s^{\ell}}^+(n))^{\mf{a}}$. For arbitrary $\la \in \Lambda_{s^{\ell}}(n)$ we have a Levi subalgebra $\mf{l}:=  \mathfrak{h}\oplus\left(\oplus_{\alpha \in \Phi_{\la}}\mathfrak{g}_{\alpha}\right) \cong \mf{q}(\ell) \times \mf{q}(n-\ell)$ and the maximal parabolic subalgebras $\mf{p} := \mathfrak{h}\oplus\left(\oplus_{\alpha \in \Phi_{\la}\cup \Phi^+}\mathfrak{g}_{\alpha}\right)$. Let $\mf{u}$ be the corresponding nilradical of $\mf{p}$.  We denote by $\mc{O}^{\mf{p}}$ the maximal parabolic subcategory of (see, e.g., \cite[Section 3.1]{Mar14}) $\mc{O}$ with respect to $\mf{p}$. Namely, $\mc{O}^{\mf{p}}$  is the Serre subcategory of $\mc{O}$ generated by $\mf{p}$-locally finite, and $\mf{l}_{\bar{0}}$-semisimple $\mf{g}$-modules. We define $\mc{O}_{n,s^{\ell}}$ to be the full subcategory of $\mf{g}$-modules in $\mc{O}$ with weights in $\Lambda_{s^{\ell}}(n)$ and $\mc{F}:= \mc{O}^{\mf{p}} \cap \mc{O}_{n,s^{\ell}}$ its maximal parabolic subcategory. For each $M \in \mc{F}$, note that $M$ is also $\mf{l}$-semisimple since all weights of $M$ are $\mf{l}$-typical. As a conclusion, if $\Pi^i L(\mu)\in \mc{F}$ for some $i=0,1$ then we have $\mu \in \Lambda_{s^{\ell}}^+(n)$.

Let $\la \in \Lambda^+_{s^{\ell}}(n)$.   Note that every irreducible $\mf{l}$-module of highest weight $\la$ can be extended to a $\mf p$-module by letting $\mf u$ act trivially.  We define $L^0(\la)$ to be the finite-dimensional irreducible $\mf{l}$-module with highest weight space $I_{\la}$. Therefore the corresponding parabolic Verma module $K(\la):=\text{Ind}_{\mf p}^\G L^0(\la)$ has the irreducible quotient $L(\la)$. Furthermore, we note that $K(\la)$ is $\mf{p}$-locally finite and all the $\mf{l}$-weights of $K(\la)$ are $\mf{l}$-typical. Therefore we have $K(\la),L(\la) \in \mc{F}$.  Consequently, $\mc{F}$ is the Serre subcategory of $\mc{O}$ generated by $\{\Pi^iL(\la)| \la \in \Lambda^+_{s^{\ell}}(n), \ i\in \{0,1\} \}$.

 For $\la \in \Lambda^+_{s^{\ell}}(n)$. We also denote by $P(\la)$ and $U(\la)$ the projective cover of $L(\la)$ and the tilting module corresponding to $\la$ in $\mc{O}^{\mf{p}}$, respectively. For their definitions and existences, we refer to \cite[Proposition 1,7]{Mar14} and \cite[Theorem 2]{Mar14}. Note that all weights of $P(\la),U(\la)$ are in $\Lambda_{s^{\ell}}(n)$ since they are indecomposable (by definition). That is, $P(\la),U(\la) \in \mc{F}$.

  Let $P$ be the free abelian group on basis $\{\epsilon_a\}_{a\in \mathbb{Z}}$. Let $\text{wt}(\cdot) : \Lambda^+_{s^{\ell}}(n) \rightarrow P$ be the weight function defined by (c.f. \cite[Section 2-c]{Br2})
\begin{align}
\text{wt}(\la) := \sum_{i=1}^{\ell}\epsilon_{\la_{i}-s}  + \sum_{i=\ell+1}^{n}(-\epsilon_{-(\la_{i}+s)}).\end{align}
  By Lemma \ref{centralch}, we have $\chi_{\la} = \chi_{\mu}$ if and only if $\text{wt}(\la) = \text{wt}(\mu)$. By  \eqref{qnBasicLinkagePrinciple}, we have  decomposition $\mc F = \oplus_{\la\in \mf{h}_{\bar{0}}^*} \mc{F}_{\chi_{\la}} = \oplus_{\gamma \in P} \mc{F}_{\gamma}$  according to central characters $\chi_{\la}$ with $\text{wt}(\la) = \gamma$.

 Let $\mathbb{C}^{n|n}$ and $(\mathbb{C}^{n|n})^*$  be the standard representation and its dual, respectively. Denote the projection functor from $\mc{F}$ to $\mc{F}_{\gamma}$ by $\text{pr}_{\gamma}$.  We define the {\em translation functors} $\text{E}_{a}, \text{F}_{a}: \mc{F} \rightarrow \mc{F}$ as follows
\begin{align}
\text{E}_{a} (M):= \text{pr}_{\gamma+(\epsilon_a-\epsilon_{a+1})}(M\otimes (\mathbb{C}^{n|n})^*), \ \
\text{F}_{a} (M):= \text{pr}_{\gamma-(\epsilon_a-\epsilon_{a+1})}(M\otimes \mathbb{C}^{n|n}),
\end{align}
for all  $M\in  \mc{F}_{\gamma}$, $\gamma \in P$ , $a\in \mathbb{Z}$. For each $a\in \mathbb{Z}$ , it is not hard to see that both $\text{E}_{a}$ and $\text{F}_{a}$ are exact and bi-adjoint to each other. 
We write $\la \rightarrow_{a} \mu$ if $\la , \mu \in \Lambda^+_{s^{\ell}}(n)$ and there exists $1\leq i \leq \ell$ such that $\la_i  = \mu_i-1 = a+s$  or there exists $\ell+1 \leq i\leq n$ such that $\la_i = \mu_i-1 = -a -1 -s $, and in addition, $\la_j = \mu _j$ for all $i\neq j$.
We have the following lemma.
\begin{lem} \label{chOfKInTranslationFunctor}
Let $\la\in \Lambda^+_{s^{\ell}}(n)$. Then both $\emph{E}_{a} K(\la)$ and $\emph{F}_{a} K(\la)$ have flags of parabolic Verma modules and  we have the following formula:
\begin{align}
\emph{ch}\emph{E}_{a} K(\la) = 2\sum_{\mu \rightarrow_{a} \la} \emph{ch}K(\mu),\label{ComputationOfTranslationFunctors_1} \\
\emph{ch}\emph{F}_{a} K(\la) = 2\sum_{\la \rightarrow_{a} \mu} \emph{ch}K(\mu). \label{ComputationOfTranslationFunctors_2}
\end{align}
\end{lem}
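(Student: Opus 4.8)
The plan is to compute the effect of the translation functors $\mathrm{E}_a$ and $\mathrm{F}_a$ on the parabolic Verma module $K(\la)$ by first analyzing them on the level of ordinary Verma modules for the queer Lie superalgebra, and then cutting down by projection onto the relevant central-character block. First I would recall that $K(\la)=\mathrm{Ind}_{\mf p}^{\G}L^0(\la)$ and that, as an $\mf l$-module, $L^0(\la)$ has highest weight space $I_\la$ of dimension $2^{\lceil\ell(\la)/2\rceil}$; since $\la\in\Lambda^+_{s^\ell}(n)$ with $s\notin\Z/2$ all coordinates of $\la$ are nonzero, so $\ell(\la)=n$. The standard representation $\C^{n|n}$ of $\G$ restricted to $\mf p$ has a filtration whose subquotients are $\mf p$-modules obtained from the weight spaces $\C_{\varepsilon_i}$ (for the even part, with an accompanying one-dimensional Clifford-type space in the odd directions), and dually for $(\C^{n|n})^*$. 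By the tensor identity, $K(\la)\otimes\C^{n|n}=\mathrm{Ind}_{\mf p}^{\G}(L^0(\la)\otimes \C^{n|n})$, so $K(\la)\otimes\C^{n|n}$ has a filtration with subquotients $\mathrm{Ind}_{\mf p}^{\G}\big(L^0(\la)\otimes S\big)$ for $S$ running over the $\mf p$-subquotients of $\C^{n|n}$; one then identifies each $L^0(\la)\otimes S$, as an $\mf l$-module, with a (sum of shifts of an) irreducible $\mf l$-module $L^0(\nu)$ via the queer analogue of the $\mathfrak{gl}$ tensor-product rule, taking care of the extra factor $2$ coming from the dimension of the Clifford module / the parity structure (this is the source of the coefficient $2$ in the stated formulas). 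This gives a parabolic-Verma flag of $K(\la)\otimes\C^{n|n}$ with the weights $\la+\varepsilon_i$ appearing; projecting onto the block $\mc F_{\gamma\mp(\epsilon_a-\epsilon_{a+1})}$ via $\mathrm{pr}$ keeps exactly those summands whose highest weight has $\mathrm{wt}(\cdot)$ equal to the target, and by the explicit form of $\mathrm{wt}$ these are precisely the $\mu$ with $\la\to_a\mu$ (respectively $\mu\to_a\la$ for $\mathrm{E}_a$, using $(\C^{n|n})^*$).

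The key steps, in order, are: (1) record the $\mf p$-module filtration of $\C^{n|n}$ and $(\C^{n|n})^*$, including the behavior of the odd part so as to correctly track the factor $2$; (2) apply the tensor identity $\mathrm{Ind}_{\mf p}^{\G}(A)\otimes V\cong \mathrm{Ind}_{\mf p}^{\G}(A\otimes \mathrm{Res}_{\mf p}V)$ to get a $\mathrm{Ind}_{\mf p}^{\G}(-)$-flag; (3) identify each $L^0(\la)\otimes S$ as an $\mf l\cong\mf q(\ell)\times\mf q(n-\ell)$-module, which amounts to the statement that tensoring the irreducible $\mf q(m)$-module of highest weight $\la$ with the natural module gives, up to the Clifford-module multiplicity, a filtration by the irreducibles of highest weight $\la+\varepsilon_i$ that still lie in $\Lambda^+$; (4) translate $\mathrm{Ind}_{\mf p}^{\G}L^0(\nu)=K(\nu)$ back, discarding any $\nu\notin\Lambda^+_{s^\ell}(n)$ (these contribute Verma-type but not parabolic-Verma terms, and I expect them to die after the block projection precisely because $s\notin\Z/2$ forces the typicality that makes the would-be singular coordinates incomparable); (5) apply $\mathrm{pr}_{\gamma\pm(\epsilon_a-\epsilon_{a+1})}$ and read off which $\mu$ survive using Lemma~\ref{centralch} in the form $\chi_\la=\chi_\mu\iff\mathrm{wt}(\la)=\mathrm{wt}(\mu)$, matching the combinatorics to the relation $\to_a$.

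The main obstacle I anticipate is step (3)–(4): controlling the parity/Clifford subtleties of the queer-algebra tensor product so that the multiplicity $2$ comes out correctly and, more importantly, verifying that no spurious non-dominant terms $K(\nu)$ with $\nu\notin\Lambda^+_{s^\ell}(n)$ survive the projection. For the latter, I would argue that such a $\nu$ arises only by adding some $\varepsilon_i$ that violates the strict inequalities defining $\Lambda^+_{s^\ell}(n)$, forcing two coordinates to become equal or out of order; since $s\notin\Z/2$, the only way $\mathrm{wt}(\nu)$ can equal the target $\gamma\pm(\epsilon_a-\epsilon_{a+1})$ is realized by a genuine element of $\Lambda^+_{s^\ell}(n)$, so after $\mathrm{pr}$ only the claimed $K(\mu)$'s remain, with the even multiplicity $2$. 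Exactness of $\mathrm{E}_a,\mathrm{F}_a$ (already noted) together with exactness of $\mathrm{Ind}_{\mf p}^{\G}$ on the category of $\mf p$-modules in question guarantees that the resulting filtration is a genuine parabolic-Verma flag, and taking characters yields \eqref{ComputationOfTranslationFunctors_1} and \eqref{ComputationOfTranslationFunctors_2}.
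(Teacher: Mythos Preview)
Your overall strategy (apply the tensor identity to rewrite $K(\la)\otimes\C^{n|n}$ as $\mathrm{Ind}_{\mf p}^{\G}(L^0(\la)\otimes\C^{n|n})$ and then analyze the resulting $\mf p$-filtration) matches the paper's opening move. However, your execution of steps (1), (3) and (4) contains genuine confusions that the paper's argument sidesteps entirely.

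First, in step (1) you filter $\C^{n|n}$ as a $\mf p$-module by weight spaces $\C_{\varepsilon_i}$. That is not a $\mf p$-filtration: the $\mf l$-irreducible constituents of $\C^{n|n}$ are the natural modules for $\mf q(\ell)$ and $\mf q(n-\ell)$, not $(1|1)$-dimensional pieces. If you filter only to the $\mf b$-level you obtain ordinary (not parabolic) Verma subquotients, and then your step (4) worry about ``spurious non-dominant $\nu$'' becomes a real problem you have not resolved; the claim that such terms ``die after block projection because $s\notin\Z/2$'' is unjustified and in fact irrelevant. If instead you filter correctly at the $\mf p$-level, step (3) becomes the assertion that $L^0(\la)\otimes(\text{natural $\mf q(m)$-module})$ decomposes as a sum of $L^0(\la+\varepsilon_i)$ over those $i$ with $\la+\varepsilon_i$ dominant, each with multiplicity two. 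This is exactly the Pieri-type rule for $\mf q(m)$; you simply assert it, but it is the entire content of the lemma.

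The paper avoids this module-level analysis altogether by passing to characters. One has $\mathrm{ch}\,K(\la)=D\cdot s_{^{(1)}\la-\rho^\ell_{\bar 0}}\cdot s_{\la^{(1)}-\rho^{n-\ell}_{\bar 0}}$ (a product of Schur polynomials times an explicit factor $D$) and $\mathrm{ch}\,\C^{n|n}=2\sum_{i=1}^n e^{\varepsilon_i}$. Multiplying and applying the classical Pieri formula for symmetric functions gives $\mathrm{ch}(K(\la)\otimes\C^{n|n})=2\sum_\mu \mathrm{ch}\,K(\mu)$, the sum over dominant $\mu=\la+\varepsilon_i$; the factor $2$ is simply $\dim(\C^{n|n})_{\varepsilon_i}$, not a Clifford subtlety. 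Non-dominant $\nu$ never appear because the corresponding Schur polynomials vanish. Projecting to the required block then picks out exactly the $\mu$ with $\la\to_a\mu$. I would recommend replacing your steps (1)--(4) by this character computation; the tensor identity already gives the parabolic-Verma flag, and Pieri does the rest.
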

\begin{proof}
We first note that  we have
\begin{align} \label{ComputationOfTranslationFunctors}
 K(\la) \otimes \mathbb{C}^{n|n} \cong U(\mf{g}) \otimes_\mf{p} (L^0(\la)\otimes \mathbb{C}^{n|n}).  \end{align}
This implies that $K(\la) \otimes \mathbb{C}^{n|n}$ (and so the summand of $\text{F}_{a} K(\la)$) has a filtration whose non-zero subquotients are parabolic Verma modules. Note the $\mf{l}_{\bar{0}} \cong \mf{gl}(\ell)\times \mf{gl}(n-\ell)$ with the Cartan subalgebra $\mf{h}_{\bar{0}} = \oplus_{i=1}^{n}\mathbb{C}e_{ii}$ and its dual $\mf{h}_{\bar{0}}^* = \oplus_{i=1}^{n}\mathbb{C}\varepsilon_i$. Let $^{(1)}\zeta:= \sum_{i=1}^{\ell}\la_i\varepsilon_i$ and $\zeta^{(1)}:=\sum_{i=\ell+1}^n \la_i \varepsilon_i$,  and $\rho^{\ell}_{\bar{0}},\rho^{n-\ell}_{\bar{0}}$ be Weyl vectors for the general linear Lie algebras $\mf{gl}(\ell)$ and $ \mf{gl}(n-\ell)$, respectively. Then the module in  \eqref{ComputationOfTranslationFunctors} has character
\begin{align}
\text{ch} \left( K(\la) \otimes \mathbb{C}^{n|n} \right) =2\cdot D \cdot \sum_{i=1}^n \left( e^{\varepsilon_{i}} \cdot s_{^{(1)}\la - \rho^{\ell}_{\bar{0}}}\cdot  s_{\la^{(1)} - \rho^{n-\ell}_{\bar{0}}} \right).
\end{align}
where $s_{^{(1)}\la - \rho^{\ell}_{\bar{0}}}$ and $ s_{\la^{(1)} - \rho^{n-\ell}_{\bar{0}}}$ are respectively Schur functions corresponding to $^{(1)}\la - \rho^{\ell}_{\bar{0}}$ and $\la^{(1)}- \rho^{n-\ell}_{\bar{0}}$, respectively, and \begin{align} D = 2^{\lceil n/2\rceil} \prod_{1\leq i< j \leq \ell} \frac{1+e^{-\varepsilon_i+\varepsilon_j}}{1-e^{-\varepsilon_i+\varepsilon_j}} \cdot \prod_{\ell+ 1\leq i< j \leq n} \frac{1+e^{-\varepsilon_i+\varepsilon_j}}{1-e^{-\varepsilon_i+\varepsilon_j}}, \end{align} \begin{align} D\cdot {s_{^{(1)}\zeta - \rho^{\ell}_{\bar{0}}}\cdot s_{\zeta^{(1)} - \rho^{n-\ell}_{\bar{0}}}} = {\text{ch}K(\zeta)}, \end{align}
for all $\zeta \in \Lambda^+_{s^{\ell}}(n)$ (c.f. \cite[Theorem 2]{Pe} and \cite[Section 3.1.3]{CW}).
By the Pieri formula (see, e.g., \cite[Lemma 5.16]{Mac95}), we may conclude that $\text{ch} \left( K(\la) \otimes \mathbb{C}^{n|n} \right) = 2\sum_{\mu} \text{ch} K(\mu)$, where the summation is over $\mu$ such that  $\mu = \la +\varepsilon_i$, for some $1 \leq i\leq n$, and $\mu_1 > \cdots >\mu_{\ell}$, $\mu_{\ell+1} > \cdots >\mu_n$. From the definition of $\text{F}_a$, we obtain \eqref{ComputationOfTranslationFunctors_2}.

Formula \eqref{ComputationOfTranslationFunctors_1} can be obtained by a similar argument.
\end{proof}

Let $\la\in \Lambda^+_{s^{\ell}}(n)$. It is not hard to prove that both $\text{E}_aU(\la)$ and $\text{F}_aU(\la)$ are direct sums of tilting modules (see, e.g., \cite[Corollary 4.27]{Br1}). Furthermore, we have the following lemma.
\begin{lem} \label{TransFunOfTiltings}
Let $\la\in \Lambda^+_{s^{\ell}}(n)$. Then the multiplicity of each non-zero tilting summand of $\emph{E}_aU(\la)$ and $\emph{F}_aU(\la)$ is even.
\end{lem}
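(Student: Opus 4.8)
The plan is to reduce the statement for tilting modules to the analogous statement for parabolic Verma modules, namely Lemma~\ref{chOfKInTranslationFunctor}, via a parity argument on the multiplicities occurring in Verma flags. Recall that both $\text{E}_aU(\la)$ and $\text{F}_aU(\la)$ are direct sums of tilting modules, and a direct sum of tilting modules $T$ is determined by its character, since the characters $\{\text{ch}\,U(\mu)\}$ of indecomposable tiltings are linearly independent and $\text{ch}\,U(\mu) = \text{ch}\,K(\mu) + (\text{lower terms in the }\leq\text{-order})$. Hence it suffices to track the coefficient of $K(\mu)$ in a parabolic Verma flag of $\text{F}_aU(\la)$ (and of $\text{E}_aU(\la)$), and show this coefficient is even for every $\mu\in\Lambda^+_{s^\ell}(n)$.

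First I would record that each $U(\la)$ admits a parabolic Verma flag whose multiplicities $(U(\la):K(\nu))$ are governed by BGG reciprocity in $\mc{O}^{\mf p}$; what I actually need is only the formal consequence that, writing $\text{ch}\,U(\la) = \sum_\nu c_{\la\nu}\,\text{ch}\,K(\nu)$ with $c_{\la\nu}\in\mathbb{Z}_{\geq 0}$ and $c_{\la\la}=1$, the translation functors act on this flag through their action on the $K(\nu)$'s. Concretely, since $\text{F}_a$ is exact and sends parabolic Verma modules to modules with parabolic Verma flags (as in the proof of Lemma~\ref{chOfKInTranslationFunctor}), applying $\text{F}_a$ to a $K$-flag of $U(\la)$ gives
\begin{align*}
\text{ch}\,\text{F}_aU(\la) = \sum_{\nu} c_{\la\nu}\,\text{ch}\,\text{F}_aK(\nu) = \sum_{\nu} c_{\la\nu}\cdot 2\sum_{\nu\rightarrow_a\mu}\text{ch}\,K(\mu) = 2\sum_{\mu}\Big(\sum_{\nu\rightarrow_a\mu}c_{\la\nu}\Big)\text{ch}\,K(\mu),
\end{align*}
where I have used \eqref{ComputationOfTranslationFunctors_2} from Lemma~\ref{chOfKInTranslationFunctor}. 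Thus every coefficient in the parabolic Verma flag of $\text{F}_aU(\la)$ is manifestly even; the same computation with \eqref{ComputationOfTranslationFunctors_1} handles $\text{E}_aU(\la)$. Now I invoke the character-determines-the-module principle for direct sums of tiltings: if $\text{F}_aU(\la) = \bigoplus_\mu U(\mu)^{\oplus m_\mu}$, then comparing the coefficient of $\text{ch}\,K(\mu)$ on both sides and inducting downward on the partial order $\leq$ (the highest $\mu$ with $m_\mu\neq 0$ has $c_{\text{F}_aU(\la),\mu\text{ in }K\text{-flag}} = m_\mu$, which we just showed is even; then subtract $m_\mu\,\text{ch}\,U(\mu)$ and repeat) forces every $m_\mu$ to be even. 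This gives the claim.

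The main obstacle I anticipate is the bookkeeping needed to make the passage ``$U(\la)$ has a parabolic Verma flag and $\text{F}_a$ respects it'' rigorous in the queer parabolic setting — one must know that $\text{F}_aU(\la)$ genuinely has a parabolic Verma flag (not merely the right character), so that the downward induction comparing $K$-multiplicities is legitimate; this should follow from exactness of $\text{F}_a$ together with the fact, used already in Lemma~\ref{chOfKInTranslationFunctor}, that $\text{F}_aK(\nu)\cong U(\mf g)\otimes_{\mf p}(L^0(\nu)\otimes\mathbb{C}^{n|n})$ after projection has a parabolic Verma filtration, plus the standard fact that a filtered object built from $K$-flag pieces again has a $K$-flag. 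A secondary point is to confirm the linear independence of $\{\text{ch}\,U(\mu)\}$ and the unitriangularity $\text{ch}\,U(\mu) = \text{ch}\,K(\mu) + \sum_{\nu<\mu}(\ast)\text{ch}\,K(\nu)$, which is standard from the construction of tilting modules in $\mc{O}^{\mf p}$ (cf.\ \cite{Mar14}), but should be cited carefully. Everything else is the elementary parity propagation displayed above.
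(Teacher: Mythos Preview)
Your proposal is correct and follows essentially the same approach as the paper: both arguments use Lemma~\ref{chOfKInTranslationFunctor} to see that the coefficients of $\text{ch}\,K(\mu)$ in $\text{ch}\,\text{F}_aU(\la)$ are all even, and then exploit the unitriangularity $\text{ch}\,U(\mu)=\text{ch}\,K(\mu)+\sum_{\nu<\mu}(\ast)\,\text{ch}\,K(\nu)$ to deduce that the tilting multiplicities $m_\mu$ are even. The only cosmetic difference is that the paper argues by contradiction (take a maximal $\nu_1$ among summands of odd multiplicity and read off the $K(\nu_1)$-coefficient), while you phrase it as a direct downward induction; also, your concern about needing a genuine $K$-flag on $\text{F}_aU(\la)$ is unnecessary, since the character identity alone suffices once you know $\text{F}_aU(\la)$ decomposes as a sum of tiltings.
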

\begin{proof}
We first let $U(\nu_1),\ldots ,U(\nu_p)$ be the (not necessarily distinct) direct summands of $\text{F}_aU(\la)$. Suppose on the contrary that $[\text{F}_aU(\la): U(\nu_i)] $ is odd for some $1\leq i\leq p$. Without loss of generality, we let  $1\leq p'\leq p$ such that $[\text{F}_aU(\la) :U(\nu_i)]$ are odd for all $i\leq p'$, and $[\text{F}_aU(\la) :U(\nu_{i'})]$ are even for all $i'>p'$, and in addition, $\nu_1$ is a maximal element in $\{\nu_i | 1\leq i \leq p'\}$.  Since the coefficients of $\{\text{ch}K(\mu)| \mu\in \mf{h}_{\bar{0}}^*\}$ in $\text{ch}\text{F}_aU(\la)$ are even by Lemma \ref{chOfKInTranslationFunctor}, we have
\begin{align} \label{TranslationFunOfTilting}
 \sum_{i=1}^{p'} \text{ch}U(\nu_i)= \text{ch}\text{F}_aU(\la) - \sum_{i=p'+1}^{p} \text{ch}U(\nu_i) = 2\sum_{i=1}^{q}  \text{ch}K(\mu_i),
\end{align}
 for some (not necessarily distinct) $\mu_1,\mu_2,\ldots,\mu_q \in \mf{h}_{\bar{0}}^*$.
For a given $\nu \in \mf{h}_{\bar{0}}^*$, note that  $\text{ch}U(\nu) \in \text{ch}K(\nu) + \bigoplus_{\zeta< \nu }\mathbb{Z}_{\geq 0} \text{ch}K(\zeta)$. This means that the coefficient of $\text{ch}K(\nu_1)$ in each $\text{ch}U(\nu_j)$'s is zero for all $1\leq j\leq p'$ with $\nu_j \neq \nu_1$. But $[\text{F}_aU(\la): U(\nu_1)]= |\{ j| 1\leq j\leq p',\ \ \nu_j =\nu_1  \}|$ is the coefficient of $\text{ch}K(\nu_1)$ in $ \sum_{i=1}^{p'} \text{ch}U(\nu_i)$  contradicting \eqref{TranslationFunOfTilting}.

The result for $E_{a}U(\la)$ can be obtained by a similar argument. This completes the proof.
\end{proof}

By Lemma \ref{TransFunOfTiltings} and algorithm in \cite[Procedure 3.20]{Br1}, we have the following lemma:
\begin{lem} \label{ProCovFromBrsAlgorithm}
Let $\la \in \Lambda^+_{s^{\ell}}(n)$ with $\sharp{\la} =1$. There is an operation $Z_{\la}$ consisting of first applying the operators $X_a$'s coming from  Brundan's procedure \cite[Procedure 3.20]{Br1} and then taking a direct summand of a direct sum of two isomorphic copies such that
\begin{align}
\emph{ch}Z_{\la}U(t_{\la}) = \emph{ch}K(\la)+ \emph{ch}K(\la^-),
\end{align}
where $t_{\la} \in \Lambda^+_{s^{\ell}}(n)$ is typical, and $\lambda^- = w(\la -k\alpha)$, where $\alpha \in \Phi^+$ with $(\la,\overline{\alpha})=0$ and $k\in \mathbb{N}$ is the smallest positive integer such that $\la -k\alpha$ is $W$-conjugate to an element in $\Lambda^+_{s^{\ell}}(n)$, and $w\in W$ is such that $w(\la-k\alpha)\in\Lambda^+_{s^{\ell}}(n)$.

\end{lem}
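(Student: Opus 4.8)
The plan is to combine the structural results already in hand: Lemma \ref{chOfKInTranslationFunctor} tells us exactly how translation functors act on parabolic Verma modules, Lemma \ref{TransFunOfTiltings} tells us that the tilting summands of $\text{E}_aU(\la)$ and $\text{F}_aU(\la)$ occur with even multiplicity, and Brundan's Procedure \cite[Procedure 3.20]{Br1} tells us how, in the $\mf{gl}(\ell|n-\ell)$-setting, to build the projective cover (equivalently, via Lemma \ref{LinkagePrinciple}, the indecomposable tilting whose character is $\text{ch}K(\la)+\text{ch}K(\la^-)$ for atypicality-one weights) out of a tilting module attached to a typical weight by applying a sequence of operators $X_a$ of the form $\text{E}_a$, $\text{F}_a$ and their appropriate adjustments. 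The key point is that the procedure only uses translation by $\mathbb{C}^{n|n}$ and $(\mathbb{C}^{n|n})^*$, and the combinatorics of how translation functors move weights around in $\Lambda^+_{s^\ell}(n)$ is governed by exactly the same arrows $\la\to_a\mu$ as in \eqref{ComputationOfTranslationFunctors_1}--\eqref{ComputationOfTranslationFunctors_2}; so we may run the identical procedure in $\mc{F}$.

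First I would fix $\la\in\Lambda^+_{s^\ell}(n)$ with $\sharp\la=1$ and identify $\la^-$ explicitly as $w(\la-k\alpha)$ with $\alpha\in\Phi^+$, $(\la,\ov\alpha)=0$, and $k$ minimal such that $\la-k\alpha$ is $W$-conjugate into $\Lambda^+_{s^\ell}(n)$; one checks (using that $\text{wt}(\cdot)$ determines the central character by Lemma \ref{centralch}) that $\la$ and $\la^-$ are exactly the two weights linked to $\la$ inside the block, so that the indecomposable tilting $U$ in $\mc{F}_{\chi_\la}$ with highest weight $\la$ has $\text{ch}U=\text{ch}K(\la)+\text{ch}K(\la^-)$. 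Next I would choose a typical weight $t_\la\in\Lambda^+_{s^\ell}(n)$ in the same $\text{wt}$-class up to the relevant translations, so that $U(t_\la)=K(t_\la)=L(t_\la)$, and follow \cite[Procedure 3.20]{Br1} step by step: each step applies an operator $X_a\in\{\text{E}_a,\text{F}_a\}$ to the tilting module constructed so far. By Lemma \ref{chOfKInTranslationFunctor} and the bi-exactness and tilting-preservation of $\text{E}_a,\text{F}_a$ (plus \cite[Corollary 4.27]{Br1}), at each stage the result is a direct sum of tilting modules whose character is $2$ times a $\mathbb{Z}_{\ge0}$-combination of $\text{ch}K(\mu)$'s matching precisely the character computed by Brundan's algorithm in the $\mf{gl}$-case; the factor $2$ is uniform and, by Lemma \ref{TransFunOfTiltings}, comes entirely from multiplicities, so after the full sequence of $X_a$'s we obtain a module isomorphic to $U\oplus U$. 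Finally, taking $Z_\la$ to be the composite of these $X_a$'s followed by extracting one of the two isomorphic summands yields $\text{ch}Z_\la U(t_\la)=\text{ch}K(\la)+\text{ch}K(\la^-)$, as claimed.

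The main obstacle I anticipate is not the character bookkeeping — that is forced by Lemma \ref{chOfKInTranslationFunctor} once one matches arrows $\to_a$ with the $\mf{gl}$-combinatorics — but rather justifying that the extracted summand is \emph{independent of choices} and is genuinely the object one wants, i.e. that at the end the module really is $U\oplus U$ with $U$ indecomposable tilting, rather than some other decomposition with the same character. Here one uses that a tilting module in $\mc{F}$ is determined up to isomorphism by its character together with indecomposability of each summand (both $\mc{O}^{\mf{p}}$ and $\mc{F}$ having enough tiltings by \cite[Theorem 2]{Mar14}), and that the parabolic Verma flag multiplicities $(U:K(\mu))$ are recovered from the character via the unitriangularity $\text{ch}U(\nu)\in\text{ch}K(\nu)+\bigoplus_{\zeta<\nu}\mathbb{Z}_{\ge0}\text{ch}K(\zeta)$ already used in the proof of Lemma \ref{TransFunOfTiltings}. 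A secondary technical point is verifying that $t_\la$ can be reached from $\la$ by a valid sequence of translations staying inside $\Lambda^+_{s^\ell}(n)$ — this is exactly where one transports Brundan's weight-diagram combinatorics through the bijection $\cdot^{\mf a}$ and Lemma \ref{LinkagePrinciple}, and it requires care but no new ideas.
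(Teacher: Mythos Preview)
Your approach is essentially the same as the paper's: the paper does not supply a detailed proof of this lemma at all, but merely asserts it as a consequence of Lemma~\ref{TransFunOfTiltings} and \cite[Procedure~3.20]{Br1} in the sentence preceding the statement. Your proposal spells out exactly these two ingredients---Lemma~\ref{chOfKInTranslationFunctor} to match the translation-functor combinatorics with the $\mf{gl}$-side via $\cdot^{\mf a}$, and Lemma~\ref{TransFunOfTiltings} to justify halving---so it is correct and aligned with the intended argument.

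One small point worth tightening: be careful with the factor-of-$2$ bookkeeping. Each application of $\text{E}_a$ or $\text{F}_a$ introduces a factor of $2$ in the character by Lemma~\ref{chOfKInTranslationFunctor}, so if Brundan's $X_a$'s are interpreted as bare translation functors you would accumulate $2^k$ rather than $2$ after $k$ steps. The way the lemma is phrased, the $X_a$'s should be read as already including the summand-extraction step of \cite[Procedure~3.20]{Br1} (taking the indecomposable tilting summand with the prescribed highest weight), so that at each stage Lemma~\ref{TransFunOfTiltings} lets you pass from two isomorphic copies to one; the single remaining halving at the end then accounts for the final step. Your narrative slightly conflates these, but the fix is purely expository.
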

\begin{rem}
Note that the map $\la \in \Lambda^+_{s^{\ell}}(n) \mapsto \la^- \in \Lambda^+_{s^{\ell}}(n)$ define a bijection on $\Lambda^+_{s^{\ell}}(n)$. We denote by $\la^+$ the unique element in $\Lambda^+_{s^{\ell}}(n)$ such that $(\la^+)^- = \la$.
\end{rem}
\begin{rem} \label{ProjInje}
Since the translation functors are exact and both bi-adjoint to each other, they preserve projective and injective modules. In particular, $Z_{\la}U({t_{\la}})$ is both a projective cover and an injective hull.
\end{rem}

Let $\tau : \mf{g} \rightarrow \mf{g}$ be the anti-automorphism on $\mf{g}$ defined by $\tau(e_{ij}) = e_{ji}$ and $\tau(\overline{e}_{ij}) = \overline{e}_{ji}$ (see, e.g., \cite[Example 7.10]{Br3}). Then $\tau$ induces a contravariant auto-equivalence on $\mc{O}$ and $\mc{F}$ (see, e.g., \cite[Section 3.2]{Hum08} and \cite[Section 2.1]{Ger98}). For a given $M\in \mc{F}$, let $M^{\tau}$ be the image of $\tau$. Since $\text{ch}L(\la)^{\tau} = \text{ch}L(\la)$, we have $L(\la)^{\tau} \in \{L(\la), \Pi L(\la)\}$. Note that $\ell(\la) = n$ for each $\la \in \Lambda_{s^{\ell}}(n) $, we thus have the following lemma:
\begin{lem} \emph{(} \cite[Lemma 7]{Fr} \emph{)} \label{TauOfL}
Let $n$ be even and $\la \in \Lambda_{s^{\ell}}(n) $. Then we have
\begin{align}
L(\la)^{\tau} \cong \left\{ \begin{array}{ll}  \Pi L(\la) \text{ if } n \equiv 2 \text{ mod 4},
\\ L(\la) \text{ if } n \equiv 0 \text{ mod 4}.
 \end{array} \right.
 \end{align}
\end{lem}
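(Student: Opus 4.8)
This is \cite[Lemma 7]{Fr}; to prove it I would reduce to a computation in the Clifford superalgebra attached to the highest weight space, the whole issue being to decide, for each even $n$, which of the two mutually non-isomorphic objects $L(\la)$ and $\Pi L(\la)$ the dual $L(\la)^{\tau}$ is. First I would note that $\tau$ fixes the Cartan subalgebra $\mf{h}$ pointwise, since $\tau(e_{ii})=e_{ii}$ and $\tau(\bar{e}_{ii})=\bar{e}_{ii}$; hence the contravariant functor $M\mapsto M^{\tau}$ preserves weight spaces, so $\text{ch}\,L(\la)^{\tau}=\text{ch}\,L(\la)$ and therefore $L(\la)^{\tau}\in\{L(\la),\Pi L(\la)\}$. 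Since $\ell(\la)=n$ is even we have $\delta(\la)=0$, so $L(\la)\not\cong\Pi L(\la)$ by \lemref{ClarificationOfParityOfL}; thus exactly one case occurs and the statement is well posed.

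Next I would localize the question to the highest weight space $L(\la)_{\la}=I_{\la}$. This is the unique irreducible supermodule (up to parity) over the Clifford superalgebra $\text{Cl}:=\text{Cl}(\mf{h}_{\bar 1},\langle\cdot,\cdot\rangle_{\la})$, which is nondegenerate because every $\la_i\ne 0$, so $\text{Cl}\cong\text{Cl}_n$; for $n$ even this is a simple superalgebra, $\text{Cl}_n\cong\text{End}(\C^{2^{n/2-1}|2^{n/2-1}})$. Because $\tau$ fixes the generators $\bar{h}_i$ of $\text{Cl}$ and reverses products, it induces on $\text{Cl}$ its main anti-automorphism $\beta$, and one checks that the highest weight space of $L(\la)^{\tau}$ is the $\beta$-contragredient $I_{\la}^{\beta}$. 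By simplicity of $\text{Cl}_n$ there is, up to a scalar, a unique nondegenerate bilinear form $B$ on $I_{\la}$ with $B(xv,w)=(-1)^{\overline{x}\,\overline{v}}B(v,xw)$ for $x\in\mf{h}_{\bar 1}$, and $I_{\la}^{\beta}\cong I_{\la}$ if $B$ is even while $I_{\la}^{\beta}\cong\Pi I_{\la}$ if $B$ is odd. So it remains only to determine the parity of $B$.

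For this I would use an explicit model: pick a polarization $\mf{h}_{\bar 1}=W\oplus W'$ into complementary maximal isotropic subspaces of dimension $n/2$, realize $I_{\la}\cong\Lambda^{\bullet}W$ with $W$ acting by wedging and $W'$ by contraction, and take $B$ to be the pairing $\Lambda^{k}W\times\Lambda^{n/2-k}W\to\Lambda^{n/2}W\cong\C$. A direct check shows $B$ has the required twisted invariance, and it pairs the degree-$k$ summand of $\Lambda^{\bullet}W$, which has parity $k$, with the degree-$(n/2-k)$ summand, which has parity $n/2-k$. These parities agree when $n/2$ is even and disagree when $n/2$ is odd; hence $B$ is even exactly when $n\equiv 0 \pmod 4$ and odd exactly when $n\equiv 2\pmod 4$, which gives the stated dichotomy.

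The step I expect to be the real obstacle is the Koszul-sign bookkeeping: identifying the $\tau$-contragredient of the highest weight space precisely with $I_{\la}^{\beta}$ (one must pin down how the contragredient functor interacts with $\Pi$) and verifying that the model form $B$ genuinely satisfies the twisted invariance relation for all of $\text{Cl}_n$. The remaining ingredients --- the reduction to the highest weight space, the structure and simplicity of $\text{Cl}_n$, and the uniqueness of $B$ --- are standard.
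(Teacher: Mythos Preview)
The paper does not prove this lemma at all; it is stated with the citation to \cite[Lemma 7]{Fr} and used as a black box. The only contribution of the paper here is the sentence immediately preceding the lemma, ``Note that $\ell(\la) = n$ for each $\la \in \Lambda_{s^{\ell}}(n)$'', which explains why Frisk's hypothesis is satisfied in the present setting. There is therefore no argument in the paper to compare your proposal against.

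That said, your strategy --- reduce to the Clifford module structure on the highest weight space $I_{\la}$ and determine the parity of the essentially unique invariant bilinear form on the spin module for $\text{Cl}_n$ --- is a correct and standard route to this result, and is in the same spirit as Frisk's original argument. The explicit wedge-pairing model on $\Lambda^{\bullet}W$ with $\dim W = n/2$ does produce a form of parity $n/2 \pmod 2$, which is exactly the dichotomy claimed. Your own caveat is the right one: the genuine content lies in fixing the sign conventions consistently (whether $\tau$ is a super-anti-automorphism or an ordinary one on $U(\mf{g})$, how the contragredient functor interacts with $\Pi$, and the precise invariance relation satisfied by $B$), and in checking that the form you wrote down really is $\text{Cl}_n$-invariant in the required sense rather than just $W$-invariant. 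Once those conventions are pinned down, the computation goes through without surprises.
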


The following corollary is an immediate consequence by Lemma \ref{TauOfL}.
\begin{cor} \label{CorForTauOfL}
Let $M\in \mc{F}$. If $n \equiv 0 \text{ mod 4}$ then $M^{\tau}$ and $M$ have identical set of composition factors. If $n \equiv 2 \text{ mod 4}$ then $M^{\tau}$ and $\Pi M$ have identical set of composition factors.
\end{cor}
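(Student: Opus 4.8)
The plan is to reduce the statement to the behaviour of $\tau$ on simple objects. Recall that $\tau$ induces a \emph{contravariant} exact auto-equivalence of $\mc{F}$. Applying it to a composition series of an object $M\in\mc{F}$ (and reversing the order of the subquotients) shows that the multiset of composition factors of $M^{\tau}$ is obtained from that of $M$ by replacing each simple constituent $S$ by $S^{\tau}$; since $\tau$ is an involution, equivalently $[M^{\tau}:S]=[M:S^{\tau}]$ for every simple object $S$ of $\mc{F}$. So it suffices to identify $S^{\tau}$ for the simple objects $S$ of $\mc{F}$.

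First I would record that the simple objects of $\mc{F}$ are exactly the $L(\la)$ and $\Pi L(\la)$ with $\la\in\Lambda^{+}_{s^{\ell}}(n)$, and that $\tau$ commutes with the parity-change functor up to natural isomorphism (both act on the underlying graded dual in compatible ways), whence $(\Pi L(\la))^{\tau}\cong\Pi(L(\la)^{\tau})$. Since $\mc{F}\subseteq\mc{O}_{n,s^{\ell}}$, every weight occurring in an object of $\mc{F}$ lies in $\Lambda_{s^{\ell}}(n)$, and for such a weight $\mu$ one has $\ell(\mu)=n$ (as $s\notin\mathbb{Z}/2$ forces all coordinates nonzero). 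In both cases of the corollary $n$ is even, so Lemma \ref{TauOfL} applies to every composition factor and gives $L(\la)^{\tau}\cong L(\la)$ when $n\equiv 0\pmod 4$ and $L(\la)^{\tau}\cong\Pi L(\la)$ when $n\equiv 2\pmod 4$.

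Finally I would combine these facts. If $n\equiv 0\pmod 4$, then $S^{\tau}\cong S$ for every simple object $S$ of $\mc{F}$, so $M$ and $M^{\tau}$ have the same composition factors. If $n\equiv 2\pmod 4$, then $\tau$ interchanges $L(\la)$ and $\Pi L(\la)$ for each $\la$; since $\Pi$ is an exact auto-equivalence with $[\Pi M:S]=[M:\Pi S]$, the multiset of composition factors of $M^{\tau}$ coincides with that of $\Pi M$. I expect no real obstacle: the only points requiring (routine) care are the exactness and contravariance of the functor induced by $\tau$ and its compatibility with $\Pi$, both of which are standard and already referenced in the excerpt.
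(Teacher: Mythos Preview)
Your argument is correct and is exactly the approach intended in the paper: the corollary is stated there as an immediate consequence of Lemma~\ref{TauOfL}, and your reduction to simple constituents via the exact contravariant functor $\tau$ (together with its compatibility with $\Pi$) is precisely how one makes that ``immediate'' step explicit. Nothing more is needed.
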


For a given $M\in \mc{O}$, denote by $\text{rad}M$  and $\text{soc}M$ the radical and socle of $M$, respectively. We now can prove the following BGG reciprocity.
\begin{lem} \label{BGGReciprocity} \emph{(}BGG reciprocity\emph{)}
 Let $\la, \mu \in \Lambda^+_{s^{\ell}}(n)$. Then we have
\begin{align}
(P(\la): \Pi^i K(\mu)) = [\Pi^i K(\mu):L(\la)], \label{BGGforProjective}.
\end{align}
for $i=0,1$.
\end{lem}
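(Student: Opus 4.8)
The plan is to follow the classical argument for BGG reciprocity in parabolic category $\mc O$, adapting it to the queer setting where one must track the parity-shift functor $\Pi$ carefully. The starting point is that $\mc F$ has enough projectives, each projective $P(\la)$ has a parabolic Verma flag, and the multiplicity $(P(\la):\Pi^i K(\mu))$ is well-defined (independent of the flag) by the usual Euler-characteristic argument together with $\operatorname{ch}$-linear independence of the $\operatorname{ch}K(\mu)$. The key homological input I would use is a "Verma flag adjunction" identity of the form
\begin{align*}
(P(\la):\Pi^i K(\mu)) = \dim \operatorname{Hom}_{\mc F}\big(P(\la), \Pi^i \nabla(\mu)\big),
\end{align*}
where $\nabla(\mu)$ is the dual (costandard) parabolic Verma module, i.e.\ $\nabla(\mu) := K(\mu)^\tau$ up to a parity shift dictated by Lemma \ref{TauOfL}/Corollary \ref{CorForTauOfL}. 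This identity rests on $\operatorname{Ext}^1_{\mc F}(K(\nu), \nabla(\mu)) = 0$ for all $\nu,\mu$, the standard $\Delta$-$\nabla$ orthogonality $\operatorname{Hom}_{\mc F}(K(\nu),\nabla(\mu)) = \delta_{\nu\mu}\mathbb C$ (again up to parity), and dévissage along the Verma flag of $P(\la)$.

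The second ingredient is that $P(\la)$ is projective, so $\dim\operatorname{Hom}_{\mc F}(P(\la), N) = [N:L(\la)]$ for any $N$ of finite length; applying this with $N = \Pi^i\nabla(\mu)$ gives $(P(\la):\Pi^i K(\mu)) = [\Pi^i\nabla(\mu):L(\la)]$. Finally one identifies $[\Pi^i\nabla(\mu):L(\la)]$ with $[\Pi^i K(\mu):L(\la)]$: by Corollary \ref{CorForTauOfL} the functor $\tau$ (hence $\nabla = (-)^\tau$) either preserves composition multiplicities or shifts them all by $\Pi$, uniformly, since $n=\ell(\la)$ is constant on $\Lambda_{s^\ell}(n)$. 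When $n\equiv 0\bmod 4$, $\nabla(\mu)$ and $K(\mu)$ have the same composition factors with parity, and the statement follows directly. When $n\equiv 2\bmod 4$, $\nabla(\mu)$ and $\Pi K(\mu)$ agree, so $[\Pi^i\nabla(\mu):L(\la)] = [\Pi^{i+1}K(\mu):L(\la)]$; here I would absorb the extra shift by redefining which of $K(\mu),\Pi K(\mu)$ plays the role of the standard object, or equivalently observe that the claimed identity is stated "for $i=0,1$" so the pair of equations is stable under $i \mapsto i+1$. Either way the $n$ odd case (where $L(\la)\cong\Pi L(\la)$, so parity is irrelevant) is immediate and handled separately.

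The main obstacle I anticipate is bookkeeping of the parity functor $\Pi$ throughout — in particular making the definition of the costandard object $\nabla(\mu)$ canonical enough that the $\Delta$-$\nabla$ orthogonality $\operatorname{Hom}(K(\nu),\nabla(\mu)) = \delta_{\nu\mu}\mathbb C$ holds on the nose rather than up to an uncontrolled shift, and then checking that the shift appearing via Corollary \ref{CorForTauOfL} is exactly the one that makes both sides of \eqref{BGGforProjective} match for the same $i$. A secondary technical point is verifying $\operatorname{Ext}^1_{\mc F}(K(\nu),\nabla(\mu)) = 0$ inside the parabolic subcategory $\mc F$; this should follow from Marastoni-type results on parabolic category $\mc O$ for $\mf q(n)$ (cited already via \cite{Mar14}) together with the fact that all $\mf l$-weights occurring are $\mf l$-typical, so that $L^0(\la)$ is genuinely irreducible and the parabolic Verma modules behave like ordinary Vermas for the standardly-stratified structure. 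Once the $\Pi$-bookkeeping is pinned down, the rest is the standard two-line BGG reciprocity argument.
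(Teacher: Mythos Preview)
Your approach is correct and essentially identical to the paper's: both use the duality $\tau$ to define costandard objects, establish the $\Delta$--$\nabla$ orthogonality and $\operatorname{Ext}^1$-vanishing by the standard highest-weight argument (the paper cites \cite[Theorem~3.3]{Hum08}), then combine d\'evissage along a Verma flag of $P(\la)$ with $\dim\operatorname{Hom}(P(\la),-)=[-:L(\la)]$ and Corollary~\ref{CorForTauOfL}. One remark on your parity worry in the $n\equiv 2\pmod 4$ case: there is no leftover shift to ``absorb'' --- the shift from $\operatorname{soc}K(\mu)^\tau\cong\Pi L(\mu)$ in the Hom-orthogonality step and the shift from Corollary~\ref{CorForTauOfL} in the composition-factor step cancel exactly, so tracking both carefully (as the paper does by splitting into $n\equiv 0,2\pmod 4$) gives the stated identity on the nose; your alternative ``stable under $i\mapsto i+1$'' argument would not by itself repair a genuine mismatch, but none occurs.
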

\begin{proof}
We first assume $n$ is even. Let $\la', \mu' \in \Lambda^+_{s^{\ell}}(n)$ be arbitrary. Recall that $L(\la') \not \cong \Pi L(\la')$ in this case.
By a similar arguments of consideration on (highest) weights as in \cite[Theorem 3.3.(c),(d)]{Hum08} we first have that
\begin{align}
\text{Ext}_{\mc{F}}(\Pi^i K(\la'),K(\mu')^{\tau})=0, \label{ExtInBGG} \\
\text{dimHom}_{\mc{F}}(\Pi^i K(\la'),K(\mu')^{\tau}) \neq 0 \text{ implies that } \la' =\mu'. \label{HomInBGG}
\end{align}
 Furthermore, by Lemma \ref{TauOfL} we have
 \begin{align}\text{soc}K(\mu')^{\tau} = L(\mu')^{\tau} \cong \left\{ \begin{array}{ll}  \Pi L(\mu') \text{ if } n \equiv 2 \text{ mod 4},
\\ L(\mu') \text{ if } n \equiv 0 \text{ mod 4}.
 \end{array} \right.  \end{align}
By a similar proof as in \cite[Theorem 3.3(c)]{Hum08}, we may conclude that
\begin{align}
\text{dimHom}_{\mc{F}}(\Pi^i K(\la'),(K(\mu'))^{\tau}) = \left\{ \begin{array}{lll} 0 \text{ if } \la' \neq \mu',
\\ i \text{ if } \la' = \mu' \text{ and } n \equiv 2 \text{ mod 4},
\\ 1-i \text{ if } \la' = \mu' \text{ and } n \equiv 0 \text{ mod 4}.
 \end{array} \right.
\end{align}
\begin{align}
\text{dimHom}_{\mc{F}}(\Pi^iK(\la'),(\Pi K(\mu'))^{\tau}) = \left\{ \begin{array}{lll} 0 \text{ if } \la' \neq \mu',
\\ i \text{ if } \la' = \mu' \text{ and } n \equiv 0 \text{ mod 4},
\\ 1-i \text{ if } \la' = \mu' \text{ and } n \equiv 2 \text{ mod 4}.
 \end{array} \right.
\end{align}

Since $P(\la)$ has a flag of parabolic Verma modules, as a conclusion, we have
\begin{align}
(\Pi^i P(\la): K(\mu))= \left\{ \begin{array}{ll} \text{dimHom}_{\mc{F}}(\Pi^i P(\la), (\Pi K( \mu))^{\tau})  \text{ if } n \equiv 2-2i \text{ mod 4},\\
\text{dimHom}_{\mc{F}}(\Pi^i P(\la), K(\mu)^{\tau})  \text{ if } n \equiv 2i \text{ mod 4}.
 \end{array} \right.
\end{align}
Recall that $\text{dimHom}_{\mc{F}}(\Pi^i P(\la), M) = [M:\Pi^i L(\la)]$ for all $M \in \mc{F}$ and $\la \in \Lambda^+_{s^{\ell}}(n)$ (see, e.g, \cite[Section 3.9]{Hum08}). By Corollary \ref{CorForTauOfL}, we have $[(\Pi K(\mu))^{\tau}: \Pi^i L(\la)] = [K(\mu): \Pi^i L(\la)]$  for $ n \equiv 2 \text{ mod 4}$ (resp. $[K(\mu)^{\tau}: \Pi^i L(\la)] = [K(\mu): \Pi^i L(\la)]$  for $ n \equiv 0 \text{ mod 4}$). The proof of this lemma follows provided that $n$ is even.

Recall that $L(\la')\cong \Pi L(\la')$ if $n$ is odd. By a similar argument, the proof for odd $n$ can be obtained. This completes the proof.
\end{proof}


Let $\la \in \Lambda^+ _{s^{\ell}}(n)$ with $\sharp{\la} =1$. Define $\Lambda_{\la}:=\{\la^i| i\in \mathbb{Z}\}$ by the recursive relation $\la^{i+1} = (\la^i)^+$ and $\la^0 = \la$. We define irreducible modules $L_{\la^i}$ and parabolic Verma modules $K_{\la^i}$ for $i\in \mathbb{Z}$ as follows. First, $L_{\la^0}:=L(\la)$ and $K_{\la^0}:= K(\la)$. Then $L_{\la^i}$ for $i\neq 0$ is defined by the recursive relation coming from the short exact sequences:
\begin{align} \label{IrrChoicesInVerma}
0\rightarrow L_{\la^{i-1}} \rightarrow K_{\la^i} \rightarrow L_{\la^i} \rightarrow 0,
\end{align}
for $i\in \mathbb{Z}$. Let $P_{\la^i}$ be the projective cover of $L_{\la^i}$. Then by Lemma \ref{BGGReciprocity}, we have short exact sequence
\begin{align} \label{KacChoicesInProj}
0\rightarrow K_{\la^{i+1}} \rightarrow P_{\la^i} \rightarrow K_{\la^i} \rightarrow 0,
\end{align}
for $i\in \mathbb{Z}$. Let  $\mc{F}_{\la}$ be the block of $\mc{F}$ containing $L(\la)$. Namely, it is the Serre subcategory generated by the set of vertices in the connected component of the Ext-quiver for $\mc{F}$ containing $L(\la)$.

\begin{cor} \label{Separaction Lemma}
Let  $\la \in \Lambda^+_{s^{\ell}}(n)$ with $\sharp \la =1$. Then $L(\la)$ and $\Pi L(\la)$ are in different blocks if and only if $n$ is even. Furthermore, $\mc{F}_{\la}$ is the Serre subcategory of $\mc{F}$ generated by $\{L_{\mu}| \mu \in \Lambda_{\la}\}$.
\end{cor}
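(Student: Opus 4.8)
The plan is to prove the two assertions of Corollary~\ref{Separaction Lemma} in turn, exploiting the exact sequences \eqref{IrrChoicesInVerma} and \eqref{KacChoicesInProj} together with the duality functor $\tau$ and the translation-functor picture from Lemma~\ref{ProCovFromBrsAlgorithm} and Remark~\ref{ProjInje}. First I would settle the "Furthermore" part, since it is largely formal: by the definition of $\mc{F}_\la$ as the Serre subcategory attached to a connected component of the $\mathrm{Ext}$-quiver of $\mc{F}$, it suffices to show that the full subcategory generated by $\{L_\mu\mid \mu\in\Lambda_\la\}$ is closed under extensions with simples of $\mc{F}$ and is itself connected. Connectedness is immediate from \eqref{IrrChoicesInVerma}: the non-split sequence $0\to L_{\la^{i-1}}\to K_{\la^i}\to L_{\la^i}\to 0$ forces $\mathrm{Ext}^1(L_{\la^i},L_{\la^{i-1}})\neq 0$ (and, applying $\tau$ and Corollary~\ref{CorForTauOfL}, one also gets a nonzero $\mathrm{Ext}^1$ in the other direction, up to a parity shift), so all the $L_{\la^i}$ lie in one component. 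For closure, I would use \eqref{KacChoicesInProj}: the projective cover $P_{\la^i}$ has a two-step parabolic Verma flag with sections $K_{\la^i}$ and $K_{\la^{i+1}}$, each of which by \eqref{IrrChoicesInVerma} has only the composition factors $L_{\la^{i-1}},L_{\la^i}$ (resp.\ $L_{\la^i},L_{\la^{i+1}}$); hence $\mathrm{rad}\,P_{\la^i}$ involves only $L_{\la^{i-1}}$ and $L_{\la^{i+1}}$, so no simple outside $\Lambda_\la$ extends any $L_{\la^i}$. This shows the component of $L(\la)$ in the $\mathrm{Ext}$-quiver of $\mc{F}$ consists precisely of the vertices $\{L_\mu\mid\mu\in\Lambda_\la\}$ (one should check the $L_{\la^i}$ are pairwise non-isomorphic, which follows since the $\la^i\in\Lambda^+_{s^\ell}(n)$ are distinct by construction of $\la\mapsto\la^+$), giving the second claim.

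For the first assertion I would argue via the duality $\tau$ and Lemma~\ref{TauOfL}. If $n$ is odd, then $L(\la)\cong\Pi L(\la)$ by the remark preceding Lemma~\ref{ClarificationOfParityOfL} (since $\delta(\la)=1$ when $\ell(\la)=n$ is odd), so trivially $L(\la)$ and $\Pi L(\la)$ are in the same block. Now suppose $n$ is even, so $L(\la)\not\cong\Pi L(\la)$. The key point is to show $\mathrm{Ext}^1_{\mc F}(L_{\la^i},\Pi L_{\la^j})=0$ for all $i,j$, i.e.\ that the parity-shifted copies $\{\Pi L_\mu\}$ and the original copies $\{L_\mu\}$ are never joined by an arrow in the $\mathrm{Ext}$-quiver. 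From the analysis above, any nonzero $\mathrm{Ext}^1(L_{\la^i},L_\nu)$ forces $\nu\in\{\la^{i-1},\la^{i+1}\}$ with a \emph{definite} parity reading off from the exact sequences \eqref{IrrChoicesInVerma}: the extension of $L_{\la^i}$ by $L_{\la^{i-1}}$ occurring inside $K_{\la^i}$ carries no parity shift, and applying $\tau$ together with Corollary~\ref{CorForTauOfL} (which costs a parity shift exactly when $n\equiv 2\bmod 4$) pins down the parity of the reverse extension. One then tracks parities consistently: starting from $L_{\la^0}=L(\la)$, the chain of nonzero $\mathrm{Ext}^1$'s visits $L_{\la^i}$ with a parity determined by $i$ and by $n\bmod 4$, and never reaches $\Pi L(\la)=\Pi L_{\la^0}$. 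Hence the component of $L(\la)$ does not contain $\Pi L(\la)$, so they lie in different blocks.

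The main obstacle I anticipate is the bookkeeping of parities in the second part: one must verify that the canonical generators of the relevant $\mathrm{Ext}^1$-spaces have the parity dictated by \eqref{IrrChoicesInVerma}, \eqref{KacChoicesInProj}, and $\tau$, and that these are \emph{consistent} around the infinite chain $\cdots\to L_{\la^{i-1}}\to L_{\la^i}\to\cdots$ without producing a parity-reversing loop. Concretely, one needs that $\mathrm{Ext}^1_{\mc F}(L_{\la^i},L_{\la^{i-1}})$ is one-dimensional in a single parity (not containing both $L_{\la^{i-1}}$ and $\Pi L_{\la^{i-1}}$ as extensions), which can be extracted from the BGG reciprocity \eqref{BGGforProjective} applied to $P_{\la^{i-1}}$ — the multiplicity $(P_{\la^{i-1}}:\Pi^? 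K(\la^i))$ equals a single Jordan–Hölder multiplicity $[\Pi^? K(\la^i):L(\la^{i-1})]$, which by \eqref{IrrChoicesInVerma} is $1$ in exactly one parity. Once that rigidity is in hand, the conclusion that $\Pi L(\la)$ is genuinely in another component (equivalently, $\mc F_\la\oplus\Pi\mc F_\la$ is the full subcategory on $\{L_\mu,\Pi L_\mu\}$ when $n$ is even, and $\mc F_\la=\Pi\mc F_\la$ on $\{L_\mu\}$ when $n$ is odd) is immediate, completing the proof.
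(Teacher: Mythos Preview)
Your proposal is correct and follows essentially the same route as the paper: determine the composition factors of $P_{\la^i}$ from \eqref{IrrChoicesInVerma} and \eqref{KacChoicesInProj}, use the isomorphism $\mathrm{Ext}^1_{\mc F}(L_{\la^i},-)\cong\mathrm{Hom}_{\mc F}(\mathrm{rad}\,P_{\la^i}/\mathrm{rad}^2 P_{\la^i},-)$, and conclude that the simples in $\mc F_\la$ are exactly the $L_{\la^i}$. The paper's endgame for the parity statement is simpler than your tracking, however: since $\la^i=\la$ only for $i=0$, the unique simple of highest weight $\la$ in $\mc F_\la$ is $L_{\la^0}=L(\la)$, so when $n$ is even (and hence $\Pi L(\la)\not\cong L(\la)$ by Lemma~\ref{ClarificationOfParityOfL}) the object $\Pi L(\la)$ cannot appear among the $L_{\la^i}$ and therefore lies in a different block---no explicit parity bookkeeping is needed.
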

\begin{proof}
By Lemma \ref{ProCovFromBrsAlgorithm} and Lemma \ref{BGGReciprocity}, the set $\{L_{\la^i},L_{\la^i}, L_{\la^{i+1}},L_{\la^{i-1}}\}$ is the set of all composition factors of $P_{\la^i}$. Recall that
\begin{align} \label{ExtByRadicalFormula}
\text{Ext}_{\mc{F}}(L_{\la^i},\Pi^jL(\mu)) \cong \text{Hom}_{\mc{F}}(\text{rad}P_{\la^i}/\text{rad}^2P_{\la^i}, \Pi^jL(\mu)),
\end{align}
for all $i\in \mathbb{Z}, j=0,1$ and $\mu \in \Lambda^+_{s^{\ell}}(n)$. This means that $\{L_{\la^i}| i\in\mathbb{Z}\}$ is the complete set of irreducible objects of $\mc{F}_{\la}$ by \eqref{IrrChoicesInVerma}. Since for each $i\in \mathbb{Z}$ the object $L_{\la^i}$ is of highest weight $\la$ if and only if $i=0$ (i.e. $\la^i =\la$), we may conclude that $\Pi L(\la) \not \in \mc{F}_{\la}$ if and only if $n$ is even by Lemma \ref{ClarificationOfParityOfL}.
\end{proof}

\begin{example} \label{q2Example} (The Ext-quiver for $\mf{q}$(2))
Let $n=2$. Let $s\notin \mathbb{Z}/2$, $\alpha : =\varepsilon_1 -\varepsilon_2$, $\widetilde{s} \in s+\mathbb{Z}$ and  $\la := \widetilde{s} \alpha \in \Lambda^+_{s^{1}}(2)$. In this case, we have $K(\la) = M(\la)$ and $I_{\la} = \text{Ind}_{\mf{h}'}^{\mf{h}}\mathbb{C}v_{\la}$ is a two dimensional irreducible $\mf{h}$-module with $\mf{h}' = \mf{h}_{\bar{0}} \oplus \mathbb{C}(\overline{h_1}+\overline{h_2})$, where $\mf{h}'_{\bar{1}}$ acts on $\mathbb{C}v_{\la}$ trivially. For a given irreducible $\mf{h}$-module $V$ of $\mf{h}_{\bar{0}}$-weight $\la-\alpha$, we have that $V \cong I_{\la -\alpha}$  (resp. $V \cong \Pi I_{\la-\alpha}$ ) if and only $(\overline{h}_1+\overline{h}_2)V_{\bar{0}} =0$  (resp. $(\overline{h}_1 -\overline{h}_2)V_{\bar{0}} =0$). It is not hard to compute that  $u:=e_{21}\overline{h}_1v_{\la} - \widetilde{s}\overline{e}_{21}v_{\la}$ is a singular vector in $M(\la)$ with $\overline{h}_1u =-\overline{h}_2u$ (see, e.g., \cite[Lemma 2.44(2)]{CW}). Note that $\overline{u} =\overline{1}$,  therefore we have short exact sequence
 \begin{align}
 0 \rightarrow  \Pi L(\la - \alpha) \rightarrow K(\la) \rightarrow L(\la) \rightarrow 0. \label{q2KacStructure}
\end{align}
Thus,  $\text{Ext}_{\mc{F}}(L(\la),  \Pi L(\la-\alpha) ) \neq 0$. By Lemma \ref{BGGReciprocity} we may conclude that $P(\la)$ has a Verma flag
\begin{align}
0 \rightarrow \Pi  K(\la+\alpha) \rightarrow P(\la) \rightarrow K(\la) \rightarrow 0,
\end{align}
 and so $\{L(\la), \ \ L(\la), \ \ \Pi L(\la+\alpha),\ \ \Pi  L(\la-\alpha)\}$ is the complete set of composition factors of $P(\la)$. Therefore the set of objects of $\mc{F}_{\la}$ is $\{ \Pi^k L(\la + k\alpha)| k\in \mathbb{Z}\}$. Furthermore, if we apply $\Pi \circ \tau$ to the short exact sequence \eqref{q2KacStructure}, then it follows that $\text{Ext}_{\mc{F}}( L(\la-\alpha), L(\la) ) \neq 0$. Replace $\la$ by arbitrary $\la +k\alpha$, we may conclude that the Ext-quivers of $\mc{F}_{\la}$ and that of the principal block $(\mc{F}_{1|1})_0$ for $\mf{gl}(1|1)$ are the same.
\end{example}

By a similar argument, we may generalize results in Example \ref{q2Example}. More precisely, let $\widetilde{\tau}:= \Pi \circ \tau$ (resp. $\widetilde{\tau}:= \tau$) if $n \equiv 2 \text{ mod } 4$ (resp. $n \equiv 0 \text{ mod } 4$). Let $\la \in \Lambda^+_{s^{\ell}}(n)$ with $\sharp \la =1$. By applying $\widetilde{\tau}$ to the non-trivial short exact sequence \eqref{IrrChoicesInVerma}, it follows from \eqref{ExtByRadicalFormula} that $\text{rad} P(\la ) / \text{rad}^2P(\la) \cong L_{\la^+}\oplus L_{\la^-}$. As a consequence, we obtain the following lemma.

\begin{lem} \label{StructureOfProj}
Let $\la \in \Lambda^+_{s^{\ell}}(n)$ with $\sharp \la =1$. Then there are exactly four distinct proper submodules of $P(\la)$:
\begin{align}
\ A_{\la} \cong K_{\la^+}, \ \ B_{\la}, \ \ \emph{rad}P(\la)= A_{\la} + B_{\la}, \ \ \emph{rad}^2P(\la)= \emph{soc}P(\la)\cong L_{\la}.
\end{align}
Furthermore, we have the following short exact sequences:
\begin{align}
0 \rightarrow A_{\la} \rightarrow \emph{rad}P(\la) \rightarrow L_{\la^{-}} \rightarrow 0, \label{RadOverA} \\
0 \rightarrow B_{\la} \rightarrow \emph{rad}P(\la) \rightarrow L_{\la^{+}} \rightarrow 0, \label{RadOverB}\\
0 \rightarrow L_{\la}  \rightarrow A_{\la} \rightarrow L_{\la^{+}} \rightarrow 0, \label{StruOfA} \\
0 \rightarrow L_{\la}  \rightarrow B_{\la} \rightarrow L_{\la^{-}} \rightarrow 0. \label{StruOfB}
\end{align}
\end{lem}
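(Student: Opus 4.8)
The plan is to exploit the Verma flag of $P(\la)$ together with the already-established radical structure $\text{rad}P(\la)/\text{rad}^2P(\la)\cong L_{\la^+}\oplus L_{\la^-}$ and the composition factor count coming from Lemmas \ref{ProCovFromBrsAlgorithm} and \ref{BGGReciprocity}. First I would record, using \eqnref{KacChoicesInProj} (with $i=0$) and BGG reciprocity, that $P(\la)$ has exactly the four composition factors $L_\la, L_\la, L_{\la^+}, L_{\la^-}$, that $\text{soc}P(\la)\cong L_\la$ (since $P(\la)$ is also injective by Remark \ref{ProjInje}, its socle is a single $L_\la$), and that $\text{rad}^2P(\la)=\text{soc}P(\la)\cong L_\la$ — the last equality because once we quotient out the two-dimensional semisimple head-of-radical $L_{\la^+}\oplus L_{\la^-}$, only one $L_\la$ remains, and it cannot split off as it would contradict indecomposability of $P(\la)$. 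This pins down the Loewy structure completely: layers $L_\la$ / $L_{\la^+}\oplus L_{\la^-}$ / $L_\la$.

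Next I would set $A_\la$ to be the submodule of $P(\la)$ appearing as the kernel in \eqnref{KacChoicesInProj}, namely the copy of $K_{\la^+}$, so that $P(\la)/A_\la\cong K_\la = K_{\la^0}$; its structure $0\to L_\la\to A_\la\to L_{\la^+}\to 0$ is exactly \eqnref{IrrChoicesInVerma} with $i$ shifted, giving \eqnref{StruOfA}. Dually, I would define $B_\la$ using the injective structure: applying the duality $\wt{\tau}$ (as set up just before the lemma statement, so that $\wt\tau$ fixes each $L_\mu$ with $\mu\in\Lambda^+_{s^\ell}(n)$ up to the parity bookkeeping already handled) to the short exact sequence defining $A_\la$ produces a quotient of $P(\la)$ isomorphic to $K_{\la^+}^{\wt\tau}$; its kernel is the desired $B_\la$, with structure $0\to L_\la\to B_\la\to L_{\la^-}\to 0$, which is \eqnref{StruOfB}. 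Then $\text{rad}P(\la)=A_\la+B_\la$ because the right-hand side already surjects onto both simple summands $L_{\la^+}$ and $L_{\la^-}$ of $\text{rad}P(\la)/\text{rad}^2P(\la)$ and contains $\text{soc}P(\la)$, while $A_\la\cap B_\la=\text{soc}P(\la)\cong L_\la$ (their intersection is a nonzero submodule of each, hence contains the socle, and cannot be larger by the Loewy length), yielding \eqnref{RadOverA} and \eqnref{RadOverB} by taking quotients.

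Finally I would argue there are no proper submodules other than $0\subsetneq \text{soc}P(\la)\subsetneq A_\la, B_\la\subsetneq \text{rad}P(\la)\subsetneq P(\la)$: any proper submodule $N$ is contained in $\text{rad}P(\la)$ (as $P(\la)$ is local), and if $N$ is nonzero it contains $\text{soc}P(\la)=L_\la$; the possibilities for $N/\text{soc}P(\la)\subseteq \text{rad}P(\la)/\text{soc}P(\la)\cong L_{\la^+}\oplus L_{\la^-}$ are $0$, $L_{\la^+}$, $L_{\la^-}$, or the whole thing, corresponding to $\text{soc}P(\la)$, $A_\la$, $B_\la$, $\text{rad}P(\la)$ respectively — here I use that $L_{\la^+}$ and $L_{\la^-}$ are non-isomorphic (which holds since $\la^+\ne\la^-$ as elements of $\Lambda^+_{s^\ell}(n)$, the map $\la\mapsto\la^\pm$ being a fixed-point-free bijection by the construction in Lemma \ref{ProCovFromBrsAlgorithm}), so the two intermediate submodules are distinct and the list is complete.

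I expect the main obstacle to be the careful handling of the duality $\wt\tau$ and the parity functor $\Pi$ when constructing $B_\la$: one must check that applying $\wt\tau$ to the sequence defining $A_\la$ lands back inside $\mc{F}_\la$ (rather than $\Pi\mc{F}_\la$) and genuinely produces a \emph{sub}module $B_\la\ne A_\la$ of $P(\la)$ with the claimed composition series, which requires knowing $P(\la)^{\wt\tau}\cong P(\la)$ and that $\text{Hom}_{\mc F}(P(\la),P(\la))$ has the expected dimension so that distinct quotients correspond to distinct submodules. All of this is bookkeeping that the setup preceding the lemma (Corollary \ref{CorForTauOfL}, Lemma \ref{BGGReciprocity}, Remark \ref{ProjInje}) has been arranged to make routine, but it is the step most prone to sign/parity errors.
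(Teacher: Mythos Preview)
Your proposal is correct and follows essentially the same line as the paper's argument: set $A_\la=K_{\la^+}$ from the Verma flag \eqref{KacChoicesInProj}, construct $B_\la$ as $(P(\la)/A_\la)^{\wt\tau}$ inside $P(\la)^{\wt\tau}\cong P(\la)$ via the duality $\wt\tau$, and read off the Loewy structure from the composition factors and the already-established $\text{rad}P(\la)/\text{rad}^2P(\la)\cong L_{\la^+}\oplus L_{\la^-}$. Your write-up is in fact more complete than the paper's, since you explicitly enumerate the submodule lattice to justify the word ``exactly''; the parity bookkeeping you flag as the delicate point is precisely what the paper's choice of $\wt\tau$ (and Lemma \ref{TauOfL}, Remark \ref{ProjInje}) is designed to handle.
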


 We are now in a position to state the main theorem in this section.
\begin{thm} \label{EquivalenceThm}
Let $\la \in \Lambda^+_{s^{\ell}}(n)$ with $\sharp \la =1$. Then $\mc{F}_{\la}$ is equivalent to $(\mc{F}_{{\ell | n-\ell}})_{\la^{\mf{a}}}$.
\end{thm}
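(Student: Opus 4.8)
The plan is to deduce the equivalence by realising $\mc{F}_{\la}$ and $(\mc{F}_{\ell|n-\ell})_{\la^{\mf{a}}}$ as module categories over one and the same algebra. The general linear side is available off the shelf: by \cite[Theorem 2.6]{Ser98} and \cite[Theorem 1.1]{BS12} (recalled in \secref{Introduction}) the endomorphism algebra of a projective generator of $(\mc{F}_{\ell|n-\ell})_{\la^{\mf{a}}}$ is $(K^{\infty}_{\sharp\la^{\mf{a}}})^{\mathrm{op}}$, and a direct computation with the definition of $\cdot^{\mf{a}}$ (the $-\rho$ shift is tailored precisely so that $(\la^{\mf{a}}+\rho)_i+(\la^{\mf{a}}+\rho)_j=\la_i+\la_j$), or \lemref{LinkagePrinciple}, gives $\sharp\la^{\mf{a}}=\sharp\la=1$. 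It therefore suffices to prove \thmref{EndOfProjIsoK1}, namely that the endomorphism algebra $A$ of a projective generator of $\mc{F}_{\la}$ is isomorphic to $(K^{\infty}_{1})^{\mathrm{op}}$: granting this, both $\mc{F}_{\la}$ and $(\mc{F}_{\ell|n-\ell})_{\la^{\mf{a}}}$ are equivalent to the category of finite--dimensional (locally unital) $(K^{\infty}_{1})^{\mathrm{op}}$-modules, the passage $\mc{C}\simeq \text{End}(\text{proj.~gen.})\text{-mod}$ being the standard one for a block with finite--length objects and finite--dimensional $\text{Hom}$-spaces in which every simple has a projective cover.

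To compute $A$ I would argue as follows. By \corref{Separaction Lemma} the simple objects of $\mc{F}_{\la}$ are exactly $L_{\la^{i}}$, $i\in\mathbb{Z}$, with projective covers $P(\la^{i})=P_{\la^{i}}$; write $e_{i}\in A$ for the associated idempotent. From the proof of \corref{Separaction Lemma} the composition factors of $P(\la^{i})$ are $L_{\la^{i}}$ with multiplicity two and $L_{\la^{i+1}}$, $L_{\la^{i-1}}$ each with multiplicity one, so
\begin{align*}
\dim e_{j}A e_{i} \;=\; [\,P(\la^{j}):L_{\la^{i}}\,] \;=\; 2\,\delta_{i,j}+\delta_{j,i+1}+\delta_{j,i-1},
\end{align*}
which already matches the graded dimension of $(K^{\infty}_{1})^{\mathrm{op}}$ vertex by vertex. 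The radical series of $P(\la^{i})$ given by \lemref{StructureOfProj} together with \eqnref{ExtByRadicalFormula} shows that $\text{Ext}^{1}_{\mc{F}}(L_{\la^{i}},L_{\la^{j}})$ is one--dimensional for $j=i\pm1$ and zero otherwise; choosing generators yields arrows $x_{i}\in e_{i+1}Ae_{i}$ and $y_{i}\in e_{i}Ae_{i+1}$, so that the $\text{Ext}$-quiver of $\mc{F}_{\la}$ is the quiver underlying $K^{\infty}_{1}$. For the relations: $P(\la^{i})$ is indecomposable, hence $e_{i}Ae_{i}$ is local of dimension two, so $\cong\mathbb{C}[z_{i}]/(z_{i}^{2})$; $P(\la^{i})$ has no composition factor $L_{\la^{i\pm2}}$, hence $e_{i\pm2}Ae_{i}=0$, giving $x_{i+1}x_{i}=0$ and $y_{i-1}y_{i}=0$; and by \eqnref{RadOverA}--\eqnref{StruOfB} the two length--two paths based at the vertex $i$ both have nonzero image $\text{soc}\,P(\la^{i})=L_{\la^{i}}$, hence are equal up to a nonzero scalar, which after rescaling the arrows becomes the relation $x_{i}y_{i}=y_{i-1}x_{i-1}$. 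Thus $A$ is a quotient of $(K^{\infty}_{1})^{\mathrm{op}}$ by the relations just found, with matching graded dimensions, so the quotient map is an isomorphism; combining this with the general linear side finishes the proof.

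The step I expect to be the genuine obstacle is the last one --- confirming that these relations are \emph{complete}, equivalently that the relevant length--two composites are genuinely nonzero and that $\dim e_{j}Ae_{i}$ agrees with $\dim (K^{\infty}_{1})^{\mathrm{op}}$ in every graded piece. This is exactly what the four--element submodule lattice and the short exact sequences \eqnref{RadOverA}--\eqnref{StruOfB} of \lemref{StructureOfProj} encode, and that structure is in turn produced by the BGG reciprocity of \lemref{BGGReciprocity} together with \lemref{ProCovFromBrsAlgorithm}: it is the evenness of the tilting multiplicities in \lemref{TransFunOfTiltings} that forces $Z_{\la}U(t_{\la})$ to be a genuine projective--injective object carrying a two--step parabolic Verma flag, thereby pinning $P(\la)$ down completely. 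A final, routine point is to track the parity functor $\Pi$ (\lemref{ClarificationOfParityOfL}): one must match $L_{\la^{i}}$ with the correctly parametrised simple of $(\mc{F}_{\ell|n-\ell})_{\la^{\mf{a}}}$, but since \corref{Separaction Lemma} tells us precisely which of $L(\la)$, $\Pi L(\la)$ lies in $\mc{F}_{\la}$ and all morphisms on both sides are even, this introduces no genuine super-theoretic subtlety.
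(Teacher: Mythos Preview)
Your proposal is correct and follows essentially the same route as the paper's proof: both reduce the equivalence to showing that the endomorphism ring of a projective generator of $\mc{F}_{\la}$ is $(K^{\infty}_{1})^{\mathrm{op}}$, and both extract the generators and relations from the submodule lattice of $P_{\la^{i}}$ supplied by \lemref{StructureOfProj}. The paper carries this out by writing down explicit maps $\widetilde{x}_{i},\widetilde{y}_{i},\widetilde{z}_{i}$ (see \eqnref{ComputeRelationY}, \eqnref{ComputeRelationX}) and computing their composites directly, whereas you phrase the same computation as determining the quiver with relations and closing via a graded dimension count; the content is identical.
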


\begin{proof}
 Our  goal is to prove that the endomorphism algebra
$\text{End}_{\mc{F}_{\la}}(\oplus_{\mu\in\Lambda_{\la}} P_{\mu})$ of projective generator $\oplus_{\mu\in\Lambda_{\la}} P_{\mu}$ for $\mc{F}_{\la}$ is isomorphic to $(K^{\infty}_{1})^{\text{op}}$. We fix $\mu =\la^i \in \Lambda_{\la}$. It follows from Lemma \ref{StructureOfProj} that $\text{End}_{\mc{F}_{\la}}(P_{\mu}, P_{\mu'}) =0$ if $\mu' \not \in \{ \mu^{+}, \mu, \mu^-  \}$. Since $\text{dimHom}_{\mc{F}_{\la}}(L_{\mu}, L_{\mu'}) =\delta_{\mu,\mu'} $ for all $\mu,\mu' \in \Lambda_{\la}$, it is not hard to prove that (see, e.g, \cite[Section 3.9]{Hum08})
\begin{align} \label{DimFormula}
\text{dimHom}_{\mc{F}_{\la}}(P_{\mu}, P_{\mu'}) = [P_{\mu'}:L_{\mu}] = \left\{ \begin{array}{ll}  1 \text{ in case $\mu'\in\{\mu^{+}, \mu^{-}\}$  },
\\ 2 \text{ in case $\mu' =  \mu$ }.
 \end{array} \right.
\end{align}
We now construct bases for $\text{Hom}_{\mc{F}_{\la}}(P_{\mu}, P_{\mu'})$, for $\mu' \in\{\mu ,\mu^{+}, \mu^{-}\}$:

 (1). A Basis for $\text{Hom}_{\mc{F}_{\la}}(P_{\mu}, P_{\mu})$:
 First note that the canonical epimorphism $P_{\mu}\rightarrow L_{\mu}$ gives an endomorphism of $\widetilde{z}_i\in  \text{End}_{\mc{F}_{\la}}P_{\mu}$ by mapping $P_{\mu}$ onto $\text{soc}P_{\mu} \subset P_{\mu}$. Let $1_i \in \text{End}_{\mc{F}_{\la}}P_{\mu}$ be the identity map, then we may conclude that $\text{End}_{\mc{F}_{\la}}P_{\mu}$ is generated by $1_i, \widetilde{z}_i$ by \eqref{DimFormula}.

  (2). A Basis for $\text{Hom}_{\mc{F}_{\la}}(P_{\mu}, P_{\mu^-})$:
  Next note the composition of homomorphisms
  \begin{align}
  \widetilde{ y}_{i-1}: P_{\mu} \rightarrow P_{\mu}/ A_{\mu} \cong  A_{\mu^-} \hookrightarrow P_{\mu^-}, \label{ComputeRelationY}
  \end{align} gives a non-zero element $\widetilde{ y}_{i-1}\in \text{Hom}_{\mc{F}_{\la}}(P_{\mu}, P_{\mu^-})$, and so $\mathbb{C}\widetilde{y}_{i-1} = \text{Hom}_{\mc{F}_{\la}}(P_{\mu}, P_{\mu^-})$ by \eqref{DimFormula} again.

   (3). A Basis for $\text{Hom}_{\mc{F}_{\la}}(P_{\mu}, P_{\mu^+})$:
  Note that $\text{dimHom}_{\mc{F}_{\la}}(P_{\mu}, P_{\mu^+}) =1$ by \eqref{DimFormula}. Let $\widetilde{x}_i \in \text{Hom}_{\mc{F}_{\la}}(P_{\mu}, P_{\mu^+})$ be a non-zero element. Note that each composition factor of $\widetilde{x}_i(P_{\mu})$ lies in $\{ L_{\mu^{++}}, L_{\mu}, L_{\mu^+} \}$ because $\widetilde{x}_i(P_{\mu}) \subset \text{rad}P_{\mu^+}$.
Since every non-trivial quotient of $P_{\mu}$ has composition factor $L_{\mu}$, we may conclude that $\widetilde{x}_i(P_{\mu}) = B_{\mu^+}$. Consequently,   $\widetilde{x}_i$ can be expressed as the following composition of homomorphisms
  \begin{align}
 \widetilde{x}_i: P_{\mu} \rightarrow P_{\mu}/ B_{\mu} \cong B_{\mu^+} \hookrightarrow P_{\mu^+}. \label{ComputeRelationX}
  \end{align}

  It is not hard to compute the relations $ \widetilde{x}_j \widetilde{x}_{j+1} = \widetilde{y}_j \widetilde{y}_{j+1} = 0$ for all $j\in \mathbb{Z}$ by  \eqref{ComputeRelationY} and \eqref{ComputeRelationX}. Therefore we may conclude that $ \widetilde{z}_ic = c\widetilde{z}_i = \widetilde{y}_i\widetilde{y}_j =\widetilde{x}_i\widetilde{x}_j =0  $ for all $i,j\in \mathbb{Z}, c\in \{\widetilde{x}_i,\widetilde{y}_i\}_{i\in \mathbb{Z}}$. Finally, we note that $\widetilde{y}_i\widetilde{x}_i$ and $\widetilde{x}_i\widetilde{y}_i$ can be expressed as the following composition of homomorphisms
\begin{align}
\widetilde{y}_i \widetilde{x}_i :\ \ P_{\mu} \rightarrow P_{\mu} / B_{\mu} \cong B_{\mu^+}  \rightarrow  \frac{A_{\mu^+}+ B_{\mu^+}}{ A_{\mu^+}} \cong \text{soc}P_{\mu} \subset P_{\mu}, \\
\widetilde{x}_i \widetilde{y}_i :\ \ P_{\mu^+} \rightarrow P_{\mu^+} / A_{\mu^+} \cong A_{\mu}  \rightarrow  \frac{A_{\mu}+ B_{\mu}}{ B_{\mu}} \cong \text{soc}P_{\mu^+} \subset P_{\mu^+}.
\end{align}
It is not hard to see that $\widetilde{y}_i \widetilde{x}_i = \widetilde{z}_i$ and $\widetilde{x}_i \widetilde{y}_i =  \widetilde{z}_{i+1}$ for all $i \in \mathbb{Z}$. Therefore, we have an isomorphism from $\text{End}_{\mc{F}_{\la}}(\oplus_{\mu\in\Lambda_{\la}} P_{\mu})$ to $(K^{\infty}_{1})^{\text{op}}$ sending $\widetilde{x}_i, \widetilde{y}_i, \widetilde{z}_i$ to $x_i,y_i,z_i$, respectively. This completes the proof.
\end{proof}

\begin{comment} \begin{proof}
 Our first goal is to prove that the endomorphism algebra
$\text{End}_{\mc{F}_{\la}}(\oplus_{\mu\in\Lambda_{\la}} P(\mu))$ of projective generator $\oplus_{\mu\in\Lambda_{\la}} P(\mu)$ for $\mc{F}_{\la}$ is isomorphic to $\mf{A}$.
By Lemma \ref{ProCovFromBrsAlgorithm} and \ref{BGGReciprocity}, there are short exact sequences
\begin{align}
0 \rightarrow K(\mu^+) \rightarrow P(\mu) \rightarrow K(\mu)\rightarrow 0, \\
0 \rightarrow L(\mu^-) \rightarrow K(\mu) \rightarrow L(\mu)\rightarrow 0,
\end{align}
  for all $\mu \in \Lambda_{\la}$. Therefore in $\mc{K}(\mc{F})$ we have \[[P(\mu)] = [K(\mu)]+[K(\mu^+)] = [L(\mu)]+[L(\mu^-)]+[L(\mu^+)]+[L(\mu)],\] with $\text{soc}P(\mu) = L(\mu)$. It follows immediately that $\text{End}_{\mc{F}_{\la}}(P(\mu), P(\mu')) =0$ if $\mu' \not \in \{ \mu^{+}, \mu, \mu^-  \}$. Since $\text{dimHom}_{\mc{F}_{\la}}(L(\mu), L(\mu')) =\delta_{\mu,\mu'} $ for all $\mu,\mu' \in \Lambda_{\la}$, it is not hard to prove that (see, e.g, \cite[Section 3.9]{Hum08})
\begin{align} \label{DimFormula}
\text{dimHom}(P(\mu), P(\mu')) = [P(\mu'):L(\mu)] = \left\{ \begin{array}{ll}  1 \text{ in case $\mu'\in\{\mu^{+}, \mu^{-}\}$  },
\\ 2 \text{ in case $\mu' =  \mu$ }.
 \end{array} \right.
\end{align}

We shall compute each space explicitly. Let $\mu =\la^i \in \Lambda_{\la}$. First note that the canonical epimorphism $P(\mu)\rightarrow L(\mu)$ gives a endomorphism of $\widetilde{z}_i\in  \text{End}_{\mc{F}_{\la}}P(\mu)$ by mapping $P(\mu)$ onto $\text{soc}P(\mu) \subset P(\mu)$. Let $1_i \in \text{End}_{\mc{F}_{\la}}P(\mu)$ be the identity map, then we may conclude that $\text{End}_{\mc{F}_{\la}}P(\mu)$ is generated by $1_i, \widetilde{z}_i$ by \eqref{DimFormula}.Next note the composition of homomorphisms \begin{align}P(\mu) \rightarrow P(\mu)/ K(\mu^+) \cong K(\mu) \hookrightarrow P(\mu^-),\end{align} induces a non-zero element $\widetilde{ y}_{i-1}\in \text{Hom}(P(\mu), P(\mu^-))$, and so $\widetilde{y}_{i-1}$ is  a generator $\text{Hom}(P(\mu), P(\mu^-))$ by \eqref{DimFormula} again. Finally we consider $\text{Hom}(P(\mu), P(\mu^+)) \cong \mathbb{C}$. Let $\widetilde{x}_i \in \text{Hom}(P(\mu), P(\mu^+))$ be a non-zero element.

Now we shall compute the relation between elements $\widetilde{x}_i, \widetilde{y}_i, \widetilde{z}_i$. Note that each composition factor of $\widetilde{x}_i(P(\mu))$ lies in $\{ L(\mu^{++}), L(\mu), L(\mu^+) \}$ because $\widetilde{x}_i(P(\mu)) \subset \text{rad}P(\mu^+)$.
Since every non-trivial quotient of $P(\mu)$ has composition factor $L(\mu)$ and every non-zero submodule of  $P(\mu^+)$ contains $L(\mu^+)$, we may conclude that there is a non-trivial short exact sequence
\begin{align} 0\rightarrow L(\mu^+) \rightarrow \widetilde{x}_i(P(\mu)) \rightarrow L(\mu) \rightarrow 0.\end{align}
Therefore we have  $\text{rad}^2P(\mu) = \text{soc}P(\mu) = L(\mu)$ and so $P(\mu)$ have unique submodules $A_{\mu}$ and $B_{\mu} = K(\mu^+)$ such that there are two non-trivial extensions
\begin{align}
0\rightarrow \text{soc}P(\mu) \rightarrow A_{\mu} \rightarrow L(\mu^-) \rightarrow 0 ,\\
0\rightarrow \text{soc}P(\mu) \rightarrow B_{\mu} \rightarrow L(\mu^+) \rightarrow 0.
\end{align}
And therefore
\begin{align} \label{RadOverRadSquare}
\frac{A_{\mu}}{\text{soc}P(\mu)} \oplus \frac{B_{\mu}}{\text{soc}P(\mu)} =  \text{rad}P(\mu) / \text{rad}^2P(\mu).
\end{align}
 Now by shifting scalars if needed, elements $ \widetilde{x}_i$, $\widetilde{y}_i$ can be expressed to be the composition of homomorphisms
 \begin{align} \label{ComputeRelation1}
  \widetilde{x}_i :\ \ P(\mu) \rightarrow P(\mu) / A_{\mu} \cong A_{\mu^+} \subset P(\mu^+),\\
  \widetilde{y}_{i-1} :\ \ P(\mu) \rightarrow P(\mu) / B_{\mu} \cong B_{\mu^-} \subset P(\mu^-). \label{ComputeRelation2}
 \end{align}
Therefore it follows from \eqref{ComputeRelation1},\eqref{ComputeRelation2} that $ \widetilde{x}_j \widetilde{x}_{j+1} = \widetilde{y}_j \widetilde{y}_{j+1} = 0$ for all $j\in \mathbb{Z}$ and so we may conclude that $ \widetilde{z}_ic = c\widetilde{z}_i = \widetilde{y}_i\widetilde{y}_j =\widetilde{x}_i\widetilde{x}_j =0  $ for all $i,j\in \mathbb{Z}, c\in \{\widetilde{x}_i,\widetilde{y}_i\}_{i\in \mathbb{Z}}$. Finally, we note that $\widetilde{y}_i\widetilde{x}_i$ and $\widetilde{x}_i\widetilde{y}_i$ can be expressed as the following composition of homomorphisms
\begin{align}
\widetilde{y}_i \widetilde{x}_i :\ \ P(\mu) \rightarrow P(\mu) / A_{\mu} \cong A_{\mu^+}  \rightarrow  \frac{A_{\mu^+}+ B_{\mu^+}}{ B_{\mu^+}} \cong \text{soc}P(\mu) \subset P(\mu), \\
\widetilde{x}_i \widetilde{y}_i :\ \ P(\mu^+) \rightarrow P(\mu^+) / B_{\mu^+} \cong B_{\mu}  \rightarrow  \frac{A_{\mu}+ B_{\mu}}{ A_{\mu}} \cong \text{soc}P(\mu^+) \subset P(\mu^+).
\end{align}
It is not hard to see that $\widetilde{y}_i \widetilde{x}_i = \widetilde{x}_i \widetilde{y}_i = \widetilde{z}_i$ for all $i \in \mathbb{Z}$. Therefore we have an isomorphism from $\text{End}_{\mc{F}_{\la}}(\oplus_{\mu\in\Lambda_{\la}} P(\mu))$ to $\mf{A}$ sending $\widetilde{x}_i, \widetilde{y}_i, \widetilde{z}_i$ to $x_i,y_i,z_i$, respectively. By a similar argument as in above, we have $\text{End}_{(\mc{F}_{\mf{\ell | m}})_{\gamma^a}}(\oplus_{\mu\in\Lambda_{\la}} P_{\mu}) \cong \mf{A}$. This completes the proof.

\end{proof}

\bigskip
\frenchspacing

\end{document}